\theoremstyle{plain}
\theoremstyle
{plain}
\newtheorem{teorema}{Theorem}[section]
\newtheorem{theorem}[teorema]{Theorem}
\newtheorem{proposizione}[teorema]{Proposition}
\newtheorem{proposition}[teorema]{Proposition}
\newtheorem{lemma}[teorema]{Lemma}
\newtheorem{corollario}[teorema]{Corollary}
\newtheorem{corollary}[teorema]{Corollary}
\newtheorem{fact}[teorema]{Fact}
\theoremstyle{definition}
\newtheorem{definizione}[teorema]{Definition}
\newtheorem{esempio}[teorema]{Example}
\newtheorem{example}[teorema]{Example}
\newtheorem{remark}[teorema]{Remark}
\newtheorem{question}[teorema]{Question}
\newcommand{\N}{\mathbb{N}}
\newcommand{\Z}{\mathbb{Z}}
\newcommand{\Q}{\mathbb{Q}}
\newcommand{\R}{\mathbb{R}}
\newcommand{\BB}{\mathcal B}
\newcommand{\BBB}{\mathfrak B}
\newcommand{\U}{\mathscr U}
\newcommand{\E}{\mathcal E}
\newcommand{\II}{\mathcal I}
\newcommand{\XX}{\mathcal X}
\newcommand{\YY}{\mathcal Y}
\def\CS{\mathbf{Coarse/\!_\sim}}
\def\PCS{\mathbf{Coarse}}
\def\SPCS{\mathbf{SCoarse}}
\def\QPCS{\mathbf{QCoarse}}
\def\EN{\mathbf{Entou}}
\def\Set{\mathbf{Set}}
\DeclareMathOperator{\Cay}{Cay}
\DeclareMathOperator{\Mor}{Mor}
\DeclareMathOperator{\cf}{cf}
\DeclareMathOperator{\Sym}{Sym}
\DeclareMathOperator{\USym}{USym}
\DeclareMathOperator{\Ufor}{U}
\DeclareMathOperator{\Wfun}{W}
\DeclareMathOperator{\Ffun}{F}
\DeclareMathOperator{\Gfun}{G}
\DeclareMathOperator{\Ifun}{I}
\DeclareMathOperator{\Jfun}{J}
\author{
Nicol\`o Zava\footnote{This work was partially supported by the ``National Group of Algebraic and Geometric Structures, and their Applications'' (GNSAGA - INdAM).}\\  \\ {\footnotesize Department of Mathematical, Computer and Physical Sciences, University of Udine}
\\ {\footnotesize Via Delle Scienze 206, 33100 Udine, Italy}\\ {\footnotesize \tt nicolo.zava@gmail.com}
}
\title{Generalisations of coarse spaces
\thanks{MSC: 18B99, 54E15, 54E25, 54E99, 54A99.
	\endgraf
	{Keywords}: coarse space, quasi-coarse space, semi-coarse space, bornologous map, monoid, coarse equivalence, $\Sym$-coarse equivalence.}}
\date{}
\begin{document}

\maketitle

\begin{abstract}
Coarse geometry, the branch of topology that studies the global properties of spaces, was originally developed for metric spaces and then Roe introduced coarse structures (\cite{Roe}) as a large-scale counterpart of uniformities. In the literature, there are very important generalisations of uniform spaces, such as semi-uniform and quasi-uniform spaces. In this paper, we introduce and start to study their large-scale counterparts, which generalise coarse spaces: semi-coarse spaces and quasi-coarse spaces.
\end{abstract}


\section*{Introduction}

Large-scale geometry, or coarse geometry, is the study of the global properties of spaces, ignoring their local, small-scale ones. It was initially developed for metric spaces and it found many important applications in different branches of mathematics (see \cite{NowYu} for an overview). For example, here we mention the applications to geometric group theory, where a finitely generated group has essentially two structures of metric space induced by its word metrics, up to (see \cite{harpe}). Roe introduced coarse spaces (\cite{Roe}) in order to encode the large-scale properties of spaces and to extend that approach outside the realm of metric spaces. His definition is very similar to the one of uniform space. Recall that a \textit{uniform space} is a pair $(X,\mathcal U)$, where $X$ is a set and $\mathcal U$ is a \textit{uniformity} over it, i.e., a family of subsets of $X\times X$, called entourages, that satisfy the following properties:
\begin{compactenum}[(U1)]
	\item $\mathcal U$ is a filter (i.e., a family closed under taking finite intersections and supersets);
	\item for every $U\in\mathcal U$, $\Delta_X=\{(x,x)\mid x\in X\}\subseteq U$;
	\item for every $U\in\mathcal U$, $U^{-1}=\{(x,y)\mid(y,x)\in U\}\in\mathcal U\}$;
	\item for every $U\in\mathcal U$, there exists $V\in\mathcal U$ such that $V\circ V\subseteq U$ (for every pair of entourages $E,F\subseteq X\times X$, denote $E\circ F=\{(x,z)\mid\exists y\in X:\,(x,y)\in E,\,(y,z)\in F\}$).
\end{compactenum}
For instance, if $(X,d)$ is a metric space, then, if we denote by $B_d(x,R)$ the open ball centered in $x\in X$ with radius $R>0$, the family
\begin{equation}\label{eq:intro}
\mathcal U_d=\{V\subseteq U_R\mid R\ge 0\}\text{, where, for every $R>0$, }U_R=\bigcup_{x\in X}\{x\}\times B_d(x,R),
\end{equation}
is a uniformity over $X$. Dydak and Hoffland in \cite{DydHof} and Protasov in \cite{Pro} independently introduced the large-scale counterparts of the approach to uniformities via coverings. Let us also cite balleans, which are structures equivalent to coarse spaces, that Protasov and Banakh defined to generalise metric balls (\cite{ProBan}).

Uniform spaces have been widely studied since their introduction by the work of Weil and Tukey in the first half of the last century, and successfully applied in different areas. We refer to \cite{Isb} for an introduction to the topic. However, in some cases, uniform spaces have too strong axioms and they cannot parametrise important situations. For example a {\em quasi-metric} of a set $X$ is a map $d\colon X\times X\to\R_{\ge 0}$ that satisfies all the axioms of a metric but the symmetry. Quasi-metrics naturally arises in many situations, for instance they were cited already in Hausdorff monograph \cite{Hau}, when he discussed the Hausdorff metric of a metric space. Moreover, if we allow that $d$ may also take the value $\infty$, then preorders can be described as quasi-metrics. We refer to \cite{Wil} for a general introduction to the subject. Quasi-metrics are innerlu non symmetric, so, if we consider the family $\U_d$ as in \eqref{eq:intro}, then (U3) may not be satisfied.

In order to fill the gap, {\em quasi-uniform spaces} were introduced: a {\em quasi-uniform space} is a pair $(X,\mathcal U)$, where $\mathcal U$ is a {\em quasi-uniformity} over the set $X$, i.e. a family of entourages that satisfies (U1), (U2) and (U4). There is a wide literature investigating those structures and also important applications to computer science were discovered (see the monograph \cite{FleLin} and the survey \cite{Kun} for a wide-range introduction and a broad bibliography). Similarly, a {\em semi-uniform space} is a pair $(X,\mathcal U)$, where $\mathcal U$ is a {\em semi-uniformity} over the set $X$, i.e., a family of entourages that satisfies (U1)--(U3) (see, for example, \cite{Cec}).

The aim of this paper is to introduce large-scale counterparts of quasi-uniform spaces and semi-uniform spaces, respectively, in order to generalise coarse spaces. In particular, we define quasi-coarse spaces and semi-coarse spaces (Definition \ref{def:main}). Moreover, in order to provide a more comprehensive introduction to these new objects, we consider also entourage spaces, which are structures that generalise both quasi-coarse spaces and semi-coarse spaces. First of all, scratching the surface of this topic, we focus on adapting basic notions of coarse geometry (e.g., morphisms, as bornologous maps, connectedness, boundedness) to this more general setting. Moreover, we present a different characterisation of those structures by using ball structures (\cite{ProBan}). 

We motivate our interest in quasi-coarse spaces and semi-coarse spaces by providing a wide list of examples in which those structures naturally appear. Most of them are extensions of some classical examples of coarse spaces. For instance, we widely discuss metric entourage structures (e.g., the structures induced by quasi-metrics), relation entourage structures (for example the ones induced by a preorder on a set), graphic quasi-coarse structures on directed graphs, hyperstructures, finitely generated monoids endowed with word quasi-metrics, and entourage structures on unitary magmas, monoids, and loops. In particular, we prove that every finitely generated monoid can be endowed with precisely just two word quasi-metrics up to asymorphism (Proposition \ref{prop:fin_gen_mon}), which coincide if the monoid is abelian. This result is a generalisation of the classical situation with finitely generated groups endowed with word metrics.

In studying these structures, it is useful to pursue a categorical approach to the subject. We then introduce the categories $\EN$, $\QPCS$, and $\SPCS$ of entourage spaces, quasi-coarse spaces, and semi-coarse spaces, respectively, and bornologous maps between them. The category $\PCS$, of coarse spaces and bornologous maps between them, which was previously studied in \cite{DikZa}, is a full subcategory of all those categories. In the same paper, it was shown that $\PCS$ is topological and we extend this result to $\EN$, $\QPCS$, and $\SPCS$ (Theorem \ref{theo:top_cat}).

The existence of several functors between those four categories turned out to be very useful in a twofold way. First of all, we use them to prove that $\QPCS$ is a reflective subcategory in $\EN$ (but it is not co-reflective), $\SPCS$ is a reflective and co-reflective subcategory in $\EN$, $\PCS$ is a reflective subcategory in $\SPCS$ (but it is not co-reflective), and $\PCS$ is a reflective and co-reflective subcategory in $\QPCS$ (see Theorem \ref{theo:refl_corefl} and \S\ref{sub:cat_constr}). This result helps us in defining some basic categorical constructions, such as products, coproducts and quotients. Furthermore, those functors are a fundamental tool to transport the important notion of closeness of morphisms (and thus coarse equivalences, see \cite{Roe}) from coarse spaces, to the other, weaker structures. In particular, we focus on the notion of $\Sym$-coarse equivalence between quasi-coarse spaces and we provide a characterisation of that property that is similar to the classical one for coarse equivalences between coarse spaces (see Theorem \ref{prop:sym-quasi-coarse-equivalence}).

Finally, we use $\Sym$-coarse equivalences to give important characterisations of some classes of quasi-coarse spaces: metric entourage spaces induced by extended-quasi-metrics and graphic quasi-coarse spaces, giving an answer to a problem posed by Protasov and Banakh (\cite[Problem 9.4]{ProBan}).

The paper is organised as follows. In Section \ref{sec:coarse_spaces} we recall some background in coarse geometry, while we introduce some new notions. For example, here the definitions of quasi-coarse space, semi-coarse space, entourage space are given. Moreover, one of the most important example of those structures, metric entourage structures, is provided and the entourage structures or semi-coarse structures induced by extended semi-positive-definite maps and by extended semi-pseudometric, respectively, are characterised. Moreover, \S\ref{sub:mor} is devoted to discuss morphisms between those spaces and in \S\ref{sub:ball_entou} an equivalent description of those structures by using ball structures is presented. Providing examples of entourage structures is the focus of Section \ref{sec:ex}. We widely discuss relation entourage structures (\S\ref{sub:relation_entou}), graphic quasi-coarse structures (\S\ref{sub:oriented_graph}), hyperstructures (\S\ref{sub:hyper}), finitely generated monoids (\S\ref{sub:fin_gen_mon}) and other entourage structures on algebraic structures, such as unitary magmas, monoids, and loops (\S\ref{sub:algebraic}). In Section \ref{sec:category} the categorical approach is presented. We define the four categories $\EN$, $\SPCS$, $\QPCS$, and $\PCS$ and we prove that they are topological. Moreover, we provide functors between them and show that $\QPCS$ is a reflective subcategory in $\EN$, $\SPCS$ is a reflective and co-reflective subcategory in $\EN$, $\PCS$ is a reflective subcategory in $\SPCS$, and $\PCS$ is a reflective and co-reflective subcategory in $\QPCS$ (\S\ref{sub:fun_sym}). We conclude the section by presenting some categorical constructions, such as products, coproducts and quotients (\S\ref{sub:cat_constr}). Finally, in Section \ref{sec:sym-c.e.}, we introduce the notion of $\Ffun$-coarse equivalence, where $\Ffun$ is a functor from a category $\XX$ to $\PCS$. In particular, we focus on the functor $\Sym\colon\QPCS\to\PCS$ and we characterise $\Sym$-coarse equivalences. We conclude the paper by characterising some special classes of quasi-coarse spaces (\S\ref{sub:characterizing}).



\section{Generalisations of coarse spaces}\label{sec:coarse_spaces}


Let $X$ be a set. An {\em entourage} is a subset of the product $X\times X$. For every entourage $E$, every point $x\in X$, and every subset $A$ of $X$, denote 
\begin{equation*}
E[x]=\{y\in X\mid(x,y)\in E\},\quad E[A]=\bigcup_{a\in A}E[a].
\end{equation*}

Let $X$ be a set. An {\em ideal on $X$} is a family $\II$ of subsets of $X$ which is closed under taking subsets and finite unions. Moreover, if $\mathcal F$ is a family of subsets of $X$, we denote by $\widehat{\mathcal F}$ its closure under taking subsets, i.e., $\widehat{\mathcal F}=\{A\subseteq F\mid F\in\mathcal F\}$.

\begin{definizione}\label{def:main}
	Let $X$ be a set. A family $\E\subseteq\mathcal P(X\times X)$ is an {\em entourages structure over $X$} if it is an ideal on $X\times X$ that contains the diagonal $\Delta_X$. Moreover, an entourages structure $\E$ over $X$ is
	\begin{compactenum}[$\bullet$]
		\item a {\em semi-coarse structure} if $E^{-1}\in\E$, for every $E\in\E$;
		\item a {\em quasi-coarse structure} if $E\circ F\in\E$, for every $E,F\in\E$;
		\item a {\em coarse structure} if it is both a semi-coarse and a quasi-coarse structure.
	\end{compactenum}
	The pair $(X,\E)$ is an {\em entourages space} (a {\em semi-coarse space}, a {\em quasi-coarse space}, a {\em coarse space}) if $\E$ is an entourages structure (a semi-coarse structure, a quasi-coarse structure, a coarse structure, respectively) over $X$.
\end{definizione}

If $\E$ is an entourage structure on a set $X$, then also $\E^{-1}=\{E^{-1}\mid E\in\E\}$ is an entourage structure. Of course, $\E=\E^{-1}$ if and only if $\E$ is a semi-structure. Moreover, if $\E$ is a quasi-coarse structure, then $\E^{-1}$ is a quasi-coarse structure.

Let $(X,\E)$ be an entourage space and $Y$ be a subset of $X$. Then $Y$ can be endowed with the {\em entourage substructure} $\E|_Y=\{E\cap (Y\times Y)\mid E\in\E\}$, and $(Y,\E|_Y)$ is called an {\em entourage subspace of $(X,\E)$}. If $\E$ is a quasi-coarse structure (semi-coarse structure), then $\E|_Y$ is a quasi-coarse structure (semi-coarse structure, respectively). 

If $X$ is a set, a family $\BB$ of subsets of $X\times X$ such that $\E=\widehat\BB$ is an entourages structure (semi-coarse structure, quasi-coarse structure, coarse structure, respectively) is a {\em base for the entourages structure} ({\em base for the semi-coarse structure}, {\em base for the quasi-coarse structure}, {\em base for the coarse structure}, respectively) $\E$.

Let us now give the most important example of these structures. 
\begin{example}\label{ex:metric}
Let $X$ be a set and $d\colon X\times X\to[0,\infty)$ be a map such that $d(x,x)=0$, for every $x\in X$. The map $d$ is a {\em semi-positive-definite map}. Moreover $d$ is a
\begin{compactenum}[$\bullet$]
	\item {\em semi-pseudometric} if, for every $x,y\in X$, $d(x,y)=d(y,x)$;
	\item {\em quasi-pseudometric} if, for every $x,y,z\in X$, $d(x,y)\le d(x,z)+d(z,y)$;
	\item {\em pseudometric} if it is both a quasi-pseudometric and a semi-pseudometric.
\end{compactenum}
Semi-pseudometrics, quasi-pseudometrics and metrics are {\em semi-metrics}, {\em quasi-metrics} and {\em metrics}, respectively, if $x=y$ whenever $d(x,y)=0$. If $d\colon X\times X\to[0,+\infty]$, then the function is called {\em extended}. Moreover the pair $(X,d)$ is called a {\em (extended) semi-pseudometric space}, a {\em (extended) quasi-pseudometric space}, a {\em (extended) pseudometric space}, a {\em (extended) semi-metric space}, a {\em (extended) quasi-metric space}, a {\em (extended) metric space}, respectively. 

A leading example of entourage structures is the metric entourage structure. Let $(X,d)$ be a set endowed with an extended semi-positive-definite map $d$. We define the following entourage structure:
$$
\E_d=\widehat{\{E_R\subseteq X\times X\mid R\ge 0\}},\quad\mbox{where, for every $R\ge 0$},\quad E_R=\bigcup_{x\in X}(\{x\}\times B_d(x,R)).
$$
Even though it is not precise, for the sake of simplicity, we call $\E_d$ a {\em metric entourage structure}. If $d$ is an extended semi-pseudometric, then $\E_d$ is a semi-coarse structure, while, if $d$ is an extended quasi-pseudometric, then $\E_d$ is a quasi-coarse structure. There are non-symmetric quasi-metrics that induce coarse structures. For example consider the quasi-metric space $(\Z,d)$, where $d$ is defined as follows: for every two points $m,n\in\N$,
\begin{equation}\label{eq:quasi_d_but_sym}
d(m,n)=\begin{cases}
\begin{aligned}& n-m &\text{if $m\leq n$,}\\
& 2(m-n) &\text{otherwise.}\end{aligned}
\end{cases}
\end{equation}
Although $d$ is not symmetric, $\E_d$ is a coarse structure.
\end{example}

More examples of entourage spaces will be given in \S\ref{sec:ex}.

An important notion in coarse geometry is boundedness. A subset $A$ of a coarse space $(X,\E)$ is called {\em bounded} if it satisfies one of the following equivalent properties:
\begin{compactenum}[(B1)]
	\item there exists $x\in A$ and $E\in\E$ such that $A\subseteq E[x]$;
	\item for every $x\in A$, there exists $E_x\in\E$ such that $A\subseteq E_x[x]$;
	\item there exists $E\in\E$ such that, for every $x\in A$, $A\subseteq E[x]$ (equivalently, $A\times A\in\E$).
\end{compactenum}
However, if $X$ is an entourage space, although the implications (B3)$\to$(B2)$\to$(B1) hold, (B1)--(B3) are not equivalent anymore as Example \ref{ex:bound} shows.

\begin{example}\label{ex:bound}
	(a) Let $X=\{0,1,2\}$ and consider the semi-coarse structure $\E_1=\widehat{(\{(0,1),(0,2),(1,0),(2,0)\}\cup\Delta_X)}$ and the quasi-coarse structure $\E_2=\widehat{(\{(0,1),(0,2)\}\cup\Delta_X)}$. Then the whole space $X$ satisfies (B1) in both $\E_1$ and $\E_2$, but it doesn't satisfy (B2).
	
	(b) Let $X=\N$ and $d$ and $d^\prime$ be a semi-pseudometric and a quasi-pseudometric defined as follows: for every $m,n\in\N$,
	$$
	d(m,n)=\begin{cases}
	\begin{aligned}&0&\text{if $m=n$,}\\
	&\min\{m,n\}&\text{otherwise,}\end{aligned}
	\end{cases}
	\quad\mbox{and}\quad d^\prime(m,n)=\begin{cases}
	\begin{aligned}&0&\text{if $n>m$,}\\
	&m-n&\text{otherwise.}\end{aligned}
	\end{cases}
	$$
	Then $X$ satisfies (B2) in both the semi-coarse structure $\E_d$ and the quasi-coarse structure $\E_{d^\prime}$, but it doesn't satisfy (B3).
\end{example}

An entourage space $(X,\E)$ is {\em locally finite} if, for every $E\in\E$ and $x\in X$, $E[x]$ is finite. Moreover, $X$ has {\em bounded geometry} if there exists a map $\varphi\colon\E\to\N$ such that, for every $E\in\E$ and $x\in X$, $\lvert E[x]\rvert\leq\varphi(E)$. 

Let $(X,\E)$ be a locally finite entourage structure. 
Then a subset $A$ of $X$ satisfies (B2) if and only if it satisfies (B3). In fact, if $X$ is locally finite, then every subset $A$ satisfying (B2) is finite. Hence $E=\bigcup_{x\in A}E_x\in\E$ and this entourage shows that $A$ satisfies (B3).

A family $\{A_i\}_{i\in I}$ of subsets of an entourage space $(X,\E)$ is {\em uniformly bounded} if there exists $E\in\E$ such that, for every $i\in I$ and every $x\in A_i$, $A_i\subseteq E[x]$. In particular, every element of a uniformly bounded family satisfies (B3).

Let $(X,\E)$ be an entourage space and let $x$ and $y$ be two points. If $\{(x,y)\}\in\E$, then we write $x\downarrow y$. If there exist $x_0=x,x_1,\dots,x_{n-1},x_n=y\in X$ such that $x_i\downarrow x_{i+1}$ or $x_{i+1}\downarrow x_i$, for every $i=0,\dots,n-1$, then we write $x\leftrightsquigarrow y$.

An entourage space $X$ is {\em connected} if, 
for every $x,y\in X$, $x\leftrightsquigarrow y$. 
Moreover, $X$ is {\em strongly connected} if, for every $x,y\in X$, $x\downarrow y$ and $y\downarrow x$. Equivalently, an entourage space $(X,\E)$ is strongly connected if and only if $\bigcup\E=X\times X$. Note that a coarse $(X,\E)$ space is connected if and only if it is strongly connected. This property was already introduced in \cite{Roe} in the framework of coarse spaces, so no distinction between them was necessary. 

Since the relation $\leftrightsquigarrow$ is an equivalence relation, for every entourage space $X$ we can consider its {\em connected components}, which are its equivalence classes under that relation.


If $(X,\E)$ is an entourage space, we say that $(X,\E)$ is {\em uniformly connected} if there exists $E\in\E$ such that, for every $x,y\in X$, there exists $n\in\N$ such that $(x,y)\in(E\cup E^{-1})^n$. In this case, $X$ is {\em uniformly connected with parameter $E$}.

\begin{example}
	One may ask whether there are quasi-coarse spaces that are strongly connected, but they are not semi-coarse spaces. 
	
	Let $(X,d)$ be a psuedo-metric space and let $h\colon X\to\R$ be an arbitrary function. Then the function $d_h\colon X\to\R_{\ge 0}$, defined by the law
	$$	d_h(x,y)=\begin{cases}\begin{aligned}&d(x,y)+h(y)-h(x)&\text{if $h(y)-h(x)\ge 0$,}\\
	&d(x,y) &\text{otherwise,}\end{aligned}\end{cases}
	$$	
	for every $x,y\in X$, defines a quasi-pseudometric space.
	
	Let now $X=\Z$, $d$ be the usual euclidean metric, and $h(x)=x^3$. Then $(\Z,\E_{d_h})$ is a quasi-coarse space, since $d_h$ is a quasi-metric, and it is strongly connected. However, it is not a coarse space. In fact, for every $R\ge 0$ and every $z\in\R$, $d_h(z+R,z)=R$, while $d_h(z,z+R)=R(1+3z^2+3zR+R^2)$, and the latter strongly depends on the point $z$. Hence, even though $\{(z+R,z)\mid z\in\Z\}\subseteq E_R\in\E_d$, there exists no $S\ge 0$ such that $\{(z,z+R)\mid z\in\R\}\subseteq E_S$.
\end{example}

In Example \ref{ex:metric} we introduced metric entourage structures. We now want to characterise those structures.

Let $(X,\E)$ be an entourage structure. Define its {\em cofinality} as follows: $\cf\E=\inf\{\lvert\BB\rvert\mid\widehat{\BB}=\E\}$.
\begin{proposition}\label{prop:metric}
	Let $(X,\E)$ be an entourage space. 
	\begin{compactenum}[(a)]
		\item Then there exists an extended semi-positive-definite map $d$ on $X$ such that $\E=\E_d$ if and only if $\cf\E\leq\omega$. 
		\item Suppose that $\E$ is a semi-coarse structure. Then there exists an extended semi-metric map $d$ on $X$ such that $\E=\E_d$ if and only if $\cf\E\leq\omega$.
	\end{compactenum}
\end{proposition}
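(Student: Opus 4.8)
The plan is to prove each biconditional by treating its two directions separately; the two \emph{only if} directions are identical, and the two \emph{if} directions differ only by a symmetrisation step that invokes the semi-coarse hypothesis in (b). For the forward directions, suppose $\E=\E_d$ for an extended semi-positive-definite $d$. The entourages $E_R$ are nested ($E_R\subseteq E_S$ whenever $R\le S$), so each $E_R$ sits inside $E_n$ for any integer $n>R$. Hence $\{E_n\mid n\in\N\}$ is a countable subfamily of $\{E_R\mid R\ge 0\}$ that is cofinal under inclusion, so $\widehat{\{E_n\mid n\in\N\}}=\E_d=\E$. This exhibits a countable base and yields $\cf\E\le\omega$. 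The argument uses only that $d$ is semi-positive-definite, so it settles the forward direction of both (a) and (b), a semi-metric being in particular semi-positive-definite.

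For the reverse direction of (a), assume $\cf\E\le\omega$ and fix a base $\{B_n\mid n\in\N\}$ of $\E$. First I would pass to the increasing chain $B_n'=\Delta_X\cup\bigcup_{k\le n}B_k$: each $B_n'$ lies in $\E$ (which is an ideal closed under finite unions and contains $\Delta_X$), the chain is still a base, and $B_0'\subseteq B_1'\subseteq\cdots$ with $\Delta_X\subseteq B_n'$ for every $n$. Then set $d(x,x)=0$ and, for $x\ne y$, $d(x,y)=\inf\{n\in\N\mid(x,y)\in B_n'\}$ with the convention $\inf\emptyset=+\infty$, so that $d$ is extended with values in $\{0\}\cup\N\cup\{\infty\}$. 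Because the chain increases, $(x,y)\in B_m'$ if and only if $d(x,y)\le m$, i.e.\ $B_m'=\{(x,y)\mid d(x,y)\le m\}$. Since $d$ is integer-valued, the open-ball entourage satisfies $E_{m+1}=\{(x,y)\mid d(x,y)<m+1\}=B_m'$, while for small $R$ one gets $E_R\in\{\emptyset,\Delta_X\}$. Thus $\{E_R\mid R\ge 0\}$ and $\{B_m'\mid m\in\N\}$ generate the same ideal, so $\E_d=\E$.

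For the reverse direction of (b), with $\E$ now semi-coarse I would symmetrise, putting $B_n''=B_n'\cup(B_n')^{-1}$; here the semi-coarse axiom guarantees $(B_n')^{-1}\in\E$, hence $B_n''\in\E$, and each $B_n''$ is symmetric, increasing, and still a base. To secure the separation property of a semi-metric I would shift the indexing: $d(x,x)=0$ and, for $x\ne y$, $d(x,y)=\inf\{n+1\mid(x,y)\in B_n''\}$ (again $\inf\emptyset=\infty$), so that $d(x,y)=0$ forces $x=y$. Symmetry of each $B_n''$ gives $d(x,y)=d(y,x)$, so $d$ is an extended semi-metric, and the same nesting computation, now reading $B_m''=\{(x,y)\mid d(x,y)\le m+1\}=E_{m+2}$, yields $\E_d=\E$. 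The necessity of the semi-coarse hypothesis is transparent here: the symmetrisation needs closure under inverses, and conversely any $\E_d$ with $d$ symmetric is a semi-coarse structure.

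I expect the only delicate point to be the bookkeeping of the last two paragraphs: matching the open-ball entourages $E_R$ to the (shifted) chain and checking that the two families are mutually cofinal, so that their closures coincide. The conventions, namely open balls, the integer-valued $d$, and the $+1$ shift needed to keep the diagonal as the unique zero set, are exactly where an off-by-one slip could hide, but there is no genuine conceptual obstacle beyond this verification.
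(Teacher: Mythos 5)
Your proposal is correct and takes essentially the same route as the paper: both prove the forward direction by noting $\{E_n\mid n\in\N\}$ is a countable base of $\E_d$, and both prove the converse by passing to an increasing countable base and defining $d(x,y)$ as the least index of an entourage containing $(x,y)$, with the base symmetrised in (b) via the semi-coarse axiom (the paper simply normalises the base so that $F_0=\Delta_X$, which makes the separation property of the semi-metric automatic, instead of your $+1$ index shift). The only slip is your parenthetical claim in (a) that $E_R\in\{\emptyset,\Delta_X\}$ for small $R$ --- in fact $E_R=B_0'$ for $0<R\le 1$, since your $d$ in (a) vanishes on all of $B_0'$ --- but this is harmless, as $B_0'\in\E$ and the mutual cofinality of the two families is unaffected.
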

\begin{proof}
First of all, the ``only if'' implications in both items (a) and (b) are trivial since $\{E_n\mid n\in\N\}$ is a base of $\E_d$.

(a, $\gets$) Let $\{F_n\mid n\in\N\}$ be a countable base of $\E$, and, without loss of generality, we can ask that $F_0=\Delta_X$ and $F_n\subseteq F_{n+1}$, for every $n\in\N$. Then define a map $d\colon X\times X\to\N$ as follows: for every $x,y\in X$,
\begin{equation}\label{eq:metrizable}
d(x,y)=\begin{cases}\begin{aligned}
&\min\{n\mid y\in F_n[x]\} &\text{if it exists,}\\
&\infty &\text{otherwise.}
\end{aligned}
\end{cases}
\end{equation}
It is easy to check that $d$ satisfies the required properties.

(b, $\gets$) Suppose that $\E$ is a semi-coarse structure with $\cf\E\leq\omega$. Then we can choose a base $\{F_n\mid n\in\N\}$ as in item (a) with the further property that $E_n=E_n^{-1}$, for every $n\in\N$. Then the map $d$ as in \eqref{eq:metrizable} satisfies the desired properties.
\end{proof}
Note that the maps $d$ in Proposition \ref{prop:metric} are not extended if and only if $(X,\E)$ is strongly connected.

The case where the entourage space is a quasi-coarse space (or a coarse space, in particular, which is a classical result) will be discussed in \S\ref{sub:characterizing}.

We can construct the lattice of entourage structures of a set. If $X$ is a set, denote by $\mathfrak E(X)$ the family of all entourages structures on $X$.
The lattice $\mathfrak E(X)$ is ordered by inclusion. More precisely, let $X$ be a set and $\E,\E^\prime\in\mathfrak E(X)$ be two entourages structures. Then we say that $\E$ is {\em finer} that $\E^\prime$ if $\E\subseteq\E^\prime$ (and $\E^\prime$ is {\em coarser} than $\E$).

Moreover, $\mathfrak E(X)$ has a top element $\mathcal M_X=\mathcal P(X\times X)$ (the {\em indiscrete coarse structure}) and a minimum element $\mathcal T_X=\{\Delta_X\}$ (the {\em discrete coarse structure}). 
Finally, $\mathfrak E(X)$ is a complete lattice. In fact, for every family $\{\E_i\}_{i\in I}$ of entourage structures, $\bigcap_i\E_i$ is an entourage structure and so their meet $\bigwedge_i\E_i$. Moreover, if $\E_i$ is a semi-coarse structure (quasi-coarse structure), for every $i\in I$, then also $\bigcap_i\E_i$ is a semi-coarse structure (quasi-coarse structure, respectively). Hence, the join of a family of entourage structures (semi-coarse structures, quasi-coarse structures, coarse structures) $\{\E_i\}_{i\in I}$ on a set $X$ can be defined as the entourage structure $\bigvee_i\E_i$ (semi-coarse structure, quasi-coarse structure, coarse structure, respectively) {\em generated} by $\bigcup_i\E_i$, i.e., the finest structure that contains $\E_i$, for every $i\in I$.

\subsection{Morphisms between entourage spaces}\label{sub:mor}

Let us now introduce the morphisms between those spaces. Let $f\colon X\to Y$ be a map between sets. Denote by $f\times f\colon X\times X\to Y\times Y$ the map defined by the law $(f\times f)(x,y)=(f(x),f(y))$, for every $(x,y)\in X\times X$. A map $f\colon(X,\E_X)\to(Y,\E_Y)$ between entourage spaces is said to be
\begin{compactenum}[$\bullet$]
\item {\em bornologous} (or {\em coarsely uniform}, {\em coarse}) if $(f\times f)(E)\in\E_Y$, for every $E\in\E_X$;
\item {\em weakly uniformly bounded copreserving} if, for every $E\in\E_Y$, there exists $F\in\E_X$ such that $(f\times f)(F)=E\cap(f(X)\times f(X))$;
\item {\em uniformly bounded copreserving} if, for every $E\in\E_Y$, there exists $F\in\E_X$ such that, for every $x\in X$, $E[f(x)]\cap f(X)\subseteq f(F[x])$;
\item {\em effectively proper} if, for every $E\in\E_Y$, $(f\times f)^{-1}(E)\in\E_X$.
\end{compactenum}
Note that all the previous properties can be checked just for all the entourages that belong to some bases of the entourage structures.
\begin{proposition}\label{prop:eff_cop_wcop}
Let $f\colon(X,\E_X)\to(Y,\E_Y)$ be a map between entourage spaces. Then:
\begin{compactenum}[(a)]
	\item if $f$ is effectively proper, then $f$ is uniformly bounded copreserving;
	\item if $f$ is uniformly bounded copreserving, then $f$ is uniformly weakly bounded copreserving.
\end{compactenum}
\end{proposition}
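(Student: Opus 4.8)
The plan is to prove the two implications directly from the definitions by, in each case, taking an arbitrary entourage in the target structure and producing the required witnessing entourage in the source structure. Throughout I would exploit the remark made just before the statement, that all these properties may be checked only on the entourages of a fixed base; but since the definitions already quantify over all of $\E_Y$, the cleanest route is simply to unwind what each property asserts and chase the set-theoretic inclusions.

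For part (a), I would assume $f$ is effectively proper and fix an arbitrary $E\in\E_Y$. The natural candidate for the witnessing source entourage is $F=(f\times f)^{-1}(E)$, which lies in $\E_X$ precisely by effective properness. The task then reduces to verifying the inclusion $E[f(x)]\cap f(X)\subseteq f(F[x])$ for every $x\in X$. So I would take a point $y'\in E[f(x)]\cap f(X)$, write $y'=f(y)$ for some $y\in X$, and observe that $(f(x),f(y))=(f\times f)(x,y)\in E$ forces $(x,y)\in(f\times f)^{-1}(E)=F$, i.e.\ $y\in F[x]$, whence $y'=f(y)\in f(F[x])$. This is a short preimage-chasing argument with no real obstacle.

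For part (b), I would assume $f$ is uniformly bounded copreserving and again fix $E\in\E_Y$. Uniform bounded copreservation hands me an $F\in\E_X$ with $E[f(x)]\cap f(X)\subseteq f(F[x])$ for all $x$, and I claim this same $F$ witnesses the weak property, namely $(f\times f)(F)=E\cap(f(X)\times f(X))$. The inclusion $\subseteq$ of this equality is not automatic from the hypothesis and must be arranged — so I would first replace $F$ by $F\cap(f\times f)^{-1}(E)$, which still lies in $\E_X$ because $\E_X$ is an ideal (closed under subsets), and which now guarantees $(f\times f)(F)\subseteq E$ as well as the containment in $f(X)\times f(X)$. For the reverse inclusion $\supseteq$, I would take $(f(x),y')\in E\cap(f(X)\times f(X))$ with $y'=f(y)$, apply the pointwise hypothesis to get $y'\in f(F[x])$, i.e.\ $y'=f(z)$ for some $z\in F[x]$, and conclude $(f(x),y')=(f\times f)(x,z)\in(f\times f)(F)$.

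The main subtlety — the one step that is more than bookkeeping — is precisely this intersection trick in part (b): the pointwise covering condition alone controls only the rows $E[f(x)]$, so it yields the inclusion of the image of $F$ into $E$ restricted to $f(X)\times f(X)$ only after one has cut $F$ down so that $(f\times f)(F)$ cannot overshoot $E$. Making that reduction legitimately is where the ideal axiom of an entourage structure is genuinely used, and I would flag it explicitly rather than sweep it into the routine inclusions. Everything else is a direct translation between the pointwise formulation and the image formulation of bounded copreservation.
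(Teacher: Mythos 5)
Your proof is correct, and part (a) is the paper's argument essentially verbatim: take $F=(f\times f)^{-1}(E)$ and chase preimages. In part (b), however, you do something the paper's own proof does not. The paper takes the witness $F$ from uniform bounded copreservation and proves only the single inclusion $E\cap(f(X)\times f(X))\subseteq(f\times f)(F)$ (by the same chase you run for your reverse inclusion: lift the first coordinate to $z\in f^{-1}(x)$, apply the pointwise hypothesis at $z$, land in $(f\times f)(F)$), and stops there --- even though the stated definition of weakly uniformly bounded copreserving demands the \emph{equality} $(f\times f)(F)=E\cap(f(X)\times f(X))$. Your refinement, replacing $F$ by $F\cap(f\times f)^{-1}(E)$, is exactly what is needed to secure the missing inclusion $(f\times f)(F)\subseteq E\cap(f(X)\times f(X))$, and you are right that it costs nothing: it uses only the ideal axiom (closure under subsets), not effective properness, so no circularity enters. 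The one step you should make explicit is that the reverse-inclusion chase still lands in the cut-down entourage: the pair $(x,z)$ it produces lies in the original $F$ and satisfies $(f(x),f(z))=(f(x),y')\in E$, and it is precisely this last membership that places $(x,z)$ inside $F\cap(f\times f)^{-1}(E)$. With that one-line bridge stated, your argument is complete, and in fact tighter than the paper's, whose proof of (b) as written establishes only half of the required equality.
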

\begin{proof}
(a) Suppose that $f$ is effectively proper and let $E\in\E_Y$. Then, for every $x\in X$, $E[f(x)]\cap f(X)\subseteq f((f\times f)^{-1}(E)[x])$. In fact, for every $y\in X$ such that $(f(x),f(y))\in E$, $(x,y)\in(f\times f)^{-1}(E)$ and so $f(y)\in f((f\times f)^{-1}(E)[x])$.

(b) Suppose now that $f$ is uniformly bounded copreserving and let $E\in\E_Y$. Let $F\in\E_X$ be an entourage such that, for every $x\in X$, $E[f(x)]\cap f(X)\subseteq f(F[x])$. We claim that $E\cap(f\times f)(X\times X)\subseteq(f\times f)(F)$. Let $(x,y)\in E\cap (f\times f)(X\times X)$. There exists $z\in f^{-1}(x)$, and so $y\in E[f(z)]\cap f(X)$, which implies that there exists $w\in F[z]\cap f^{-1}(y)$. Finally, note that $(z,w)\in F$ and $(x,y)=(f(z),f(w))\in(f\times f)(F)$.
\end{proof}
If $f$ is injective, then both implications of Proposition \ref{prop:eff_cop_wcop} can be easily reverted. Proposition \ref{prop:unif_bound_cop_eff_proper} gives another condition that implies their reversibility.

Note that a map $f\colon(X,\E_X)\to Y$ from an entourage space to a set has uniformly bounded fibers if and only if $R_f=\{(x,y)\in X\times X\mid f(x)=f(y)\}\in\E_X$. We call such a map {\em large-scale injective}.

\begin{proposition}\label{prop:unif_bound_cop_eff_proper}
Let $f\colon(X,\E_X)\to(Y,\E_Y)$ be a map between entourage spaces. If $f$ is effectively proper, then $f$ is large-scale injective. Moreover, if $\E_X$ is a quasi-coarse structure, then the following properties are equivalent:
\begin{compactenum}[(a)] 
	\item $f$ is large-scale injective and it is weakly uniformly bounded copreserving;
	\item $f$ is large-scale injective and it is uniformly bounded copreserving;
	\item $f$ is effectively proper.
\end{compactenum}
\end{proposition}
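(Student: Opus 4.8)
The plan is to dispatch the preliminary claim and the two ``easy'' implications first, using what is already established, and then concentrate on the single implication (a)$\Rightarrow$(c), which is the only place where the quasi-coarse hypothesis on $\E_X$ is genuinely used.

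First I would prove that effective properness implies large-scale injectivity. Since $\E_Y$ is an entourage structure, it contains the diagonal $\Delta_Y$, and applying the definition of effectively proper to $E=\Delta_Y$ gives $(f\times f)^{-1}(\Delta_Y)\in\E_X$. But $(f\times f)^{-1}(\Delta_Y)=\{(x,y)\mid f(x)=f(y)\}=R_f$, so $R_f\in\E_X$, which is exactly large-scale injectivity. This settles the first sentence and, together with Proposition \ref{prop:eff_cop_wcop}, immediately yields the chain (c)$\Rightarrow$(b)$\Rightarrow$(a): indeed Proposition \ref{prop:eff_cop_wcop}(a) gives that an effectively proper map is uniformly bounded copreserving, so (c) implies (b) once we adjoin the large-scale injectivity that we have just shown comes for free; and Proposition \ref{prop:eff_cop_wcop}(b) gives that a uniformly bounded copreserving map is weakly uniformly bounded copreserving, so (b) implies (a) (the large-scale injectivity being retained).

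The substantive step is (a)$\Rightarrow$(c). I would fix an arbitrary $E\in\E_Y$ and use weak uniform bounded copreservation to produce $F\in\E_X$ with $(f\times f)(F)=E\cap(f(X)\times f(X))$. The target is $(f\times f)^{-1}(E)\in\E_X$, and the key containment I expect to establish is
$$(f\times f)^{-1}(E)\subseteq R_f\circ F\circ R_f.$$
To see this, take $(x,y)$ with $(f(x),f(y))\in E$; since $(f(x),f(y))$ lies in $f(X)\times f(X)$, it lies in $E\cap(f(X)\times f(X))=(f\times f)(F)$, so there is $(a,b)\in F$ with $f(a)=f(x)$ and $f(b)=f(y)$. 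Then $(x,a)\in R_f$, $(a,b)\in F$, and $(b,y)\in R_f$, whence $(x,y)\in R_f\circ F\circ R_f$, as claimed.

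Finally I would invoke the hypotheses to close the argument. Large-scale injectivity gives $R_f\in\E_X$, and $F\in\E_X$ by construction; since $\E_X$ is a quasi-coarse structure it is closed under composition, so $R_f\circ F\circ R_f\in\E_X$. As $\E_X$ is an ideal, hence closed under subsets, the displayed containment forces $(f\times f)^{-1}(E)\in\E_X$, and since $E\in\E_Y$ was arbitrary, $f$ is effectively proper. The main obstacle is conceptual rather than computational: one must recognise that the pre-image $(f\times f)^{-1}(E)$, which can be strictly larger than $F$ because of the non-trivial fibers of $f$, is nonetheless controlled by ``padding'' $F$ on both sides with $R_f$, and that this padding is literally a composition — which is exactly why closure under $\circ$, i.e. the quasi-coarse axiom, is the hypothesis that makes the equivalence work.
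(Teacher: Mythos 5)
Your proposal is correct and follows essentially the same route as the paper: large-scale injectivity comes from applying effective properness to $\Delta_Y$, the implications (c)$\Rightarrow$(b)$\Rightarrow$(a) are delegated to Proposition \ref{prop:eff_cop_wcop}, and (a)$\Rightarrow$(c) is proved by exhibiting, for each $(x,y)\in(f\times f)^{-1}(E)$, a pair $(a,b)\in F$ in the same fibers, giving the containment $(f\times f)^{-1}(E)\subseteq R_f\circ F\circ R_f\in\E_X$. The paper phrases this pointwise as $(x,y)=(x,z)\circ(z,w)\circ(w,y)\in R_f\circ F\circ R_f$, which is the same argument.
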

\begin{proof}
The first statement can be easily proved: since $\Delta_Y\in\E_Y$, then $R_f=(f\times f)^{-1}(\Delta_Y)\in\E_X$.

In view of Proposition \ref{prop:eff_cop_wcop}, we just need to show the implication (a)$\to$(c). Suppose now that $f$ is weakly uniformly bounded copreserving and $R_f\in\E_X$. Let $E\in\E_Y$ and $(x,y)$ be an arbitrary point in $(f\times f)^{-1}(E)$. Let $F\in\E_X$ such that $(f\times f)(F)=E\cap(f(X)\times f(X))$. Then there exists $(z,w)\in F$ such that $(f(x),f(y))=(f(z),f(w))$ and thus
\begin{equation}
(x,y)=(x,z)\circ(z,w)\circ(w,y)\in R_f\circ F\circ R_f\in\E_X.\tag*{\qedhere}
\end{equation}
\end{proof}

\begin{proposition}
Let $f\colon(X,\E_X)\to(Y,\E_Y)$ be a uniformly bounded copreserving surjective map between entourage spaces. Then $Y$ has bounded geometry ($Y$ is locally finite) whenever $X$ has bounded geometry ($X$ is locally finite, respectively).
\end{proposition}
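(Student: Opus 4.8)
The plan is to reduce, via surjectivity, the control of the sections $E[y]$ in $Y$ to that of the sections $F[x]$ in $X$, and then invoke the hypothesis on $X$ directly. Both assertions follow from the same mechanism, so I would treat them in parallel, distinguishing only at the very last step between counting points and checking finiteness.

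First I would exploit surjectivity to simplify the uniformly bounded copreserving condition. Fix $E\in\E_Y$ and choose, by the hypothesis, an entourage $F\in\E_X$ such that $E[f(x)]\cap f(X)\subseteq f(F[x])$ for every $x\in X$. Since $f$ is surjective, $f(X)=Y$, and because $E[f(x)]\subseteq Y$ holds automatically, intersecting with $f(X)$ changes nothing: the condition reads simply $E[f(x)]\subseteq f(F[x])$ for all $x\in X$. Surjectivity is then used a second time to exhaust the target, namely every $y\in Y$ can be written as $y=f(x)$ for some $x\in X$, whence $E[y]=E[f(x)]\subseteq f(F[x])$.

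The conclusion now follows from the elementary bound $\lvert f(A)\rvert\leq\lvert A\rvert$, valid for any $A\subseteq X$ since $f$ is a function. For the bounded-geometry statement, let $\varphi_X\colon\E_X\to\N$ witness bounded geometry of $X$; then for every $y=f(x)$ one gets $\lvert E[y]\rvert\leq\lvert f(F[x])\rvert\leq\lvert F[x]\rvert\leq\varphi_X(F)$, so defining $\varphi_Y\colon\E_Y\to\N$ by $\varphi_Y(E)=\varphi_X(F)$, with $F$ the entourage chosen above for $E$, produces the required bound. For the local-finiteness statement, if $X$ is locally finite then $F[x]$ is finite, hence $f(F[x])$ is finite, and therefore $E[y]\subseteq f(F[x])$ is finite as well.

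I do not expect any serious obstacle, as the argument is essentially an unwinding of the definitions; the only point deserving attention is that surjectivity is genuinely needed and is invoked twice, once to render the intersection with $f(X)$ harmless and once to realise each $y\in Y$ as an image, since the copreservation hypothesis controls nothing about sections $E[y]$ at points $y\notin f(X)$. A minor bookkeeping remark is that the witnessing $F$ depends on $E$, so $\varphi_Y$ is obtained by selecting, for each $E\in\E_Y$, some admissible $F$ and setting $\varphi_Y(E)=\varphi_X(F)$; no further coherence among these choices is required.
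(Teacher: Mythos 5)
Your proposal is correct and follows essentially the same argument as the paper: use surjectivity to drop the intersection with $f(X)$ and to write each $y\in Y$ as $f(x)$, then bound $\lvert E[f(x)]\rvert\leq\lvert F[x]\rvert\leq\varphi(F)$ and set $\varphi_Y(E)=\varphi_X(F)$ (respectively, conclude finiteness of $E[y]$ from finiteness of $F[x]$). Your write-up merely makes explicit the two uses of surjectivity and the bound $\lvert f(A)\rvert\leq\lvert A\rvert$, which the paper leaves implicit.
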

\begin{proof}
Suppose that $\varphi\colon\E_X\to\N$ is a map that demonstrates that $(X,\E_X)$ has bounded geometry. Let $E\in\E_Y$. Then there exists $F\in\E_X$ such that, for every $x\in X$, $E[f(x)]\subseteq f(F[x])$. Hence, $\lvert E[f(x)]\rvert\leq\lvert F[x]\rvert\leq\varphi(F)$. The other implication can be similarly proved.
\end{proof}

A bijective map $f\colon(X,\E_X)\to(Y,\E_Y)$ between entourage spaces is called an {\em asymorphism} if it satisfies the following, equivalent, properties:
\begin{compactenum}[$\bullet$]
\item $f$ and $f^{-1}$ are bornologous;
\item $f$ is bornologous and weakly uniformly bounded copreserving;
\item $f$ is bornologous and uniformly bounded copreserving;
\item $f$ is bornologous and effectively proper.
\end{compactenum}


Let $(X,\E_X)$ and $(Y,\E_Y)$ be two asymorphic entourage spaces. Then $\E_X$ is a semi-coarse structure (quasi-coarse structure) if and only if $\E_Y$ is a semi-coarse structure (quasi-coarse structure, respectively). For the proof of this fact, we address to \cite{ProBan}, where the authors used the equivalent approach through ball structures (see \S\ref{sub:ball_entou} for the introduction of these structures).

Furthermore, if $X$ and $Y$ are two asymorphic entourage spaces, then $X$ is (strongly or uniformly) connected if and only if $Y$ is (respectively strongly or uniformly) connected.

\subsection{Approach via ball structures}\label{sub:ball_entou}

Let $(X,\E)$ be an entourage structure. Then we can associate to it a triple $\BBB_{\E}=(X,P,B_\E)$, where $P=\{E\in\E\mid\Delta_X\subseteq E\}$ and $B_\E(x,E)=E[x]$, for every $x\in X$ and every $E\in P$. It is an example of {\em ball structure}.

\begin{definizione}\label{def:ball_structure} (\cite{ProBan,ProZar})
	A {\em ball structure} is a triple $\BBB=(X,P,B)$ where $X$ and $P$ are sets, $P\neq\emptyset$, and $B\colon X\times P \to \mathcal P(X)$ is a map, such that 
	$x\in B(x,r)$ for every $x\in X$ and every $r\in P$. The set $X$ is called {\em support of the ball structure}, $P$ -- {\em set of radii}, and $B(x,r)$ -- {\em ball of center $x$ and radius $r$}. In case $X=\emptyset$, the map $B$ is the empty map. 
\end{definizione}

The terminology and the intuition come from the metric setting: if $(X,d)$ is a metric space, then $\BBB_d=(X,\R_{\ge 0},B_d)$, where $B_d(x,R)$ is the closed ball centred in $x\in X$ with radius $R\ge 0$, is a ball structure.

For a ball structure  $(X,P,B)$, $x\in X$, $r\in P$ and a subset $A$  of $X$, one puts
$$
B^*(x,r)=\{y\in X\mid x\in B(y,r)\}\quad\quad B(A,r)=\bigcup\{B(x,r)\mid x\in A\}.
$$

A ball structure $\BBB=(X,P,B)$ is said to be:
\begin{compactenum}[$\bullet$]
	\item {\em weakly upper multiplicative} if, for every pair of radii $r,s\in P$ there exists $t\in P$ such that $B(x,r)\cup B(x,s)\subseteq B(x,t)$, for every $x\in X$;
	\item {\em upper multiplicative} if, for every pair of radii $r,s\in P$ there exists $t\in P$ such that $B(B(x,r),s)\subseteq B(x,t)$, for every $x\in X$; 
	\item {\em upper symmetric} if, for every pair of radii $r,s\in P$ there exist $r^\prime,s^\prime\in P$ such that $B^\ast(x,r)\subseteq B(x,r^\prime)$ and $B(x,s)\subseteq B^\ast(x,s^\prime)$, for every $x\in X$.
\end{compactenum}
It is trivial that upper multiplicativity implies weak upper multiplicativity since every ball contains its center.


\begin{definizione}
	A ball structure is
	\begin{compactenum}[$\bullet$]
		\item a {\em semi-ballean} if it is weakly upper multiplicative and upper symmetric;
		\item a {\em quasi-ballean} if it is upper multiplicative;
		\item a {\em ballean} (\cite{ProBan}) if it is both a semi-ballean and a quasi-ballean.
	\end{compactenum}
\end{definizione}
For every entourage space $(X,\E)$, $\BBB_{\E}$ is indeed a weakly upper multiplicative ball structure. Moreover, if $\E$ is a semi-coarse structure, then $\BBB_{\E}$ is a semi-ballean, while, if $\E$ is a quasi-coarse structure, then $\BBB_{\E}$ is a quasi-ballean.

We have seen how we construct ball structures from entourage structures. Let us now discuss the opposite construction. Let $\BBB=(X,P,B)$ be a weakly multiplicative ball structure. Then we can define an associated entourage structure $\E_{\BBB}$ of $X$ as follows: for every $r\in P$,
$$
E_r=\bigcup_{x\in X}(\{x\}\times B(x,r)),
$$
and the family $\{E_r\mid r\in P\}$ is a base for the entourage structure $\E_\BBB$. Moreover, 
\begin{compactenum}[$\bullet$]
	\item if $\BBB$ is a semi-ballean, then $\E_\BBB$ is a semi-coarse structure;
	\item if $\BBB$ is a quasi-ballean, then $\E_{\BBB}$ is a quasi-coarse structure;
	\item if $\BBB$ is a ballean, then $\E_{\BBB}$ is a coarse structure.
\end{compactenum}

Let $\BBB$ and $\BBB^\prime$ be two weakly multiplicative ball structure on the same support $X$. Then we identify those two ball structure, and we write $\BBB=\BBB^\prime$, if $\E_{\BBB}=\E_{\BBB^\prime}$. We soon give a characterization of the equality between ball structures. Hence, for every entourage space $(X,\E)$ and every weakly multiplicative ball structure $\BBB$ on $X$,
$$
\E_{\BBB_\E}=\E\quad\mbox{and}\quad\BBB_{\E_\BBB}=\BBB.
$$
The equivalence between coarse structures and balleans have already been widely discussed (see, for example, \cite{ProZar,DikZa}).

Let $\BBB=(X,P_X,B_X)$ and $\BBB_Y=(Y,P_Y,B_Y)$ be two weakly multiplicative ball structures and $f\colon\BBB_X\to\BBB_Y$ be a map. The map $f$ is {\em bornologous} if the following equivalent properties are fulfilled:
\begin{compactenum}[$\bullet$]
	\item $f\colon(X,\E_{\BBB_X})\to(Y,\E_{\BBB_Y})$ is bornologous;
	\item for every radius $r\in P_X$, there exists $s\in P_Y$ such that $f(B_X(x,r))\subseteq B_Y(f(x),s)$, for every $x\in X$.
\end{compactenum}
Similarly, $f$ is {\em uniformly bounded copreserving} if the following equivalent properties are satisfies:
\begin{compactenum}[$\bullet$]
	\item $f\colon(X,\E_{\BBB_X})\to(Y,\E_{\BBB_Y})$ is uniformly bounded copreserving;
	\item for every $s\in P_Y$, there exists $r\in P_X$ such that $B_Y(f(x),s)\cap f(X)\subseteq f(B_X(x,r))$, for every $x\in X$.
\end{compactenum}
Thanks to this characterisation of being uniformly bounded copreserving, it is clear that this notion generalises the one of $\succ$-mapping (\cite{ProZar}): the map $f$ is a {\em $\succ$-mapping} if, for every $s\in P_Y$, there exists $r\in P_X$ such that $B_Y(f(x),s)\subseteq f(B_X(x,r))$, for every $x\in X$. Of course, a surjective map is uniformly bounded copreserving if and only if it is a $\succ$-mapping. However, the second definition is very restrictive when the map is not surjective. In fact, if a map $f\colon(X,\E_X)\to(Y,\E_Y)$ is a $\succ$-mapping, then, if $(f(x),y)\in E$ for some $E\in\E_Y$, $x\in X$ and $y\in Y$, then $y\in f(X)$.

Finally, let us give the promised characterisation of the equality between ball structures on the same support. If $\BBB$ and $\BBB^\prime$ are two ball structure on a set $X$, then $\BBB=\BBB^\prime$ if and only if $id_X\colon\BBB\to\BBB^\prime$ is an \emph{asymorphism}, i.e., bornologous with also its inverse bornologous.

We have briefly recalled how coarse spaces and balleans are equivalent constructions. In the literature, there is a third way to describe coarse spaces by using coverings: the so-called {\em large-scale structures} (\cite{DydHof}, also know as {\em asymptotic proximities} in \cite{Pro}). Those are large-scale counterpart of the classical approach to uniformities via coverings (see \cite{Isb}). Moreover, in \cite{PicPul}, the authors presented a way to use the covering approach to describe quasi-uniformities. Hence the following question naturally arises.
\begin{question}
	Is it possible to give a characterisation of entourage structures, semi-coarse structures or quasi-coarse structures through coverings?
\end{question}

\section{Some examples of entourage spaces}\label{sec:ex}

In this section we enlist some examples of entourage spaces.

\subsection{Relation entourage structures}\label{sub:relation_entou}
Let $X$ be a set and $\mathscr R$ be a reflexive relation of $X$. Define $\E_{\mathscr R}=\widehat{\{\mathscr R\}}$, which is an entourage structure, called {\em relation entourage structure}. Moreover, $\mathscr R$ is symmetric if and only if $\E_{\mathscr R}$ is a semi-coarse structure, while $\mathscr R$ is transitive if and only if $\E_{\mathscr R}$ is a quasi-coarse structure. 
Furthermore, note that, $(\E_{\mathscr R})^{-1}=\E_{\mathscr R^{-1}}$, where $\mathscr R^{-1}$ denotes the inverse of $\mathscr R$ as an entourage. Another entourage structure that can be defined from a reflexive relation $\mathscr R$ on a set $X$ is the following: $\E_{\mathscr R}^{fin}=[\mathscr R]^{<\infty}\cup\{\Delta_X\}$.

It is easy to verify the following result.
\begin{proposition}
Let $f\colon(X,\mathscr R_X)\to(Y,\mathscr R_Y)$ be a map between sets endowed with reflexive relations. Then the following properties are equivalent:
\begin{compactenum}[(a)]
\item $f$ {\em preserves the relation} (i.e., for every $x,y\in X$, $f(x)\mathscr R_Yf(y)$ provided that $x\mathscr R_Xy$);
\item $f\colon(X,\E_{\mathscr R_X})\to(Y,\E_{\mathscr R_Y})$ is bornologous;
\item $f\colon(X,(\E_{\mathscr R_X})^{-1})\to(Y,(\E_{\mathscr R_Y})^{-1})$ is bornologous;
\item $f\colon(X,\E_{\mathscr R_X}^{fin})\to(Y,\E_{\mathscr R_Y}^{fin})$ is bornologous;
\item $f\colon(X,(\E_{\mathscr R_X}^{fin})^{-1})\to(Y,(\E_{\mathscr R_Y}^{fin})^{-1})$ is bornologous.
\end{compactenum}
\end{proposition}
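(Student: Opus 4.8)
The plan is to prove that each of (b)--(e) is equivalent to (a), using repeatedly the observation preceding Proposition~\ref{prop:eff_cop_wcop} that bornologousness need only be verified on a base of the domain structure, together with the elementary identity $(f\times f)(\mathscr S^{-1})=((f\times f)(\mathscr S))^{-1}$, valid for every relation $\mathscr S$ on $X$.

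First I would settle (a)$\Leftrightarrow$(b). Since $\{\mathscr R_X\}$ is a base of $\E_{\mathscr R_X}=\widehat{\{\mathscr R_X\}}$, the map $f$ is bornologous into $(Y,\E_{\mathscr R_Y})$ if and only if $(f\times f)(\mathscr R_X)\in\E_{\mathscr R_Y}$. As $\E_{\mathscr R_Y}=\widehat{\{\mathscr R_Y\}}$ is exactly the family of all subsets of $\mathscr R_Y$, this holds if and only if $(f\times f)(\mathscr R_X)\subseteq\mathscr R_Y$, which is precisely condition (a).

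Next I would dispatch the ``inverse'' items (c) and (e) by reduction. From the identity above one gets that $f$ preserves $\mathscr R_X$ into $\mathscr R_Y$ if and only if it preserves $\mathscr R_X^{-1}$ into $\mathscr R_Y^{-1}$, since $(f\times f)(\mathscr R_X)\subseteq\mathscr R_Y$ and $(f\times f)(\mathscr R_X^{-1})\subseteq\mathscr R_Y^{-1}$ are equivalent after taking inverses. Recalling the identity $(\E_{\mathscr R})^{-1}=\E_{\mathscr R^{-1}}$ noted above, item (c) is nothing but (b) applied to the pair $\mathscr R_X^{-1},\mathscr R_Y^{-1}$, hence equivalent to ``$f$ preserves $\mathscr R^{-1}$'', hence to (a). The analogous identity $(\E_{\mathscr R}^{fin})^{-1}=\E_{\mathscr R^{-1}}^{fin}$—which follows because inversion preserves both inclusion in the relation and finiteness of the off-diagonal part—reduces (e) to (d) for $\mathscr R_X^{-1},\mathscr R_Y^{-1}$. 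Thus it only remains to prove (a)$\Leftrightarrow$(d).

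For (a)$\Leftrightarrow$(d) I would first describe $\E_{\mathscr R}^{fin}$ concretely as $\{A\subseteq\mathscr R\mid A\setminus\Delta_X\text{ is finite}\}$ (the downward, finite-union closure of the finite subsets of $\mathscr R$ together with $\Delta_X$), which admits the base $\{\Delta_X\cup F\mid F\subseteq\mathscr R\text{ finite}\}$; in particular every element of $\E_{\mathscr R}^{fin}$ is contained in $\mathscr R$. For (d)$\Rightarrow$(a), given $(x,y)\in\mathscr R_X$ the singleton $\{(x,y)\}$ lies in $\E_{\mathscr R_X}^{fin}$, so $\{(f(x),f(y))\}=(f\times f)(\{(x,y)\})\in\E_{\mathscr R_Y}^{fin}$, and since every member of the latter is a subset of $\mathscr R_Y$ we conclude $(f(x),f(y))\in\mathscr R_Y$. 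For (a)$\Rightarrow$(d), it suffices to check bornologousness on the base: for $E=\Delta_X\cup F$ we have $E\subseteq\mathscr R_X$, so $(f\times f)(E)\subseteq(f\times f)(\mathscr R_X)\subseteq\mathscr R_Y$ by (a); moreover $(f\times f)(E)=\Delta_{f(X)}\cup(f\times f)(F)$ with $\Delta_{f(X)}\subseteq\Delta_Y$ and $(f\times f)(F)$ finite, whence $(f\times f)(E)\setminus\Delta_Y$ is finite and $(f\times f)(E)\in\E_{\mathscr R_Y}^{fin}$. The one point requiring care—and the only place the argument is not completely mechanical—is exactly this bookkeeping for the finite structures: one must confirm that $f\times f$ sends the diagonal $\Delta_X$ inside $\Delta_Y$ and a finite off-diagonal part to a finite set, so that the image of a base entourage lands back in the target ``finite'' structure.
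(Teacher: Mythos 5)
Your proof is correct and complete. Note that the paper offers no proof of this proposition at all --- it is introduced with ``It is easy to verify the following result'' --- so there is nothing to compare your argument against; what you give is the natural verification: reduction to bases for (a)$\Leftrightarrow$(b), the identities $(\E_{\mathscr R})^{-1}=\E_{\mathscr R^{-1}}$ and $(\E_{\mathscr R}^{fin})^{-1}=\E_{\mathscr R^{-1}}^{fin}$ to fold (c) and (e) into the symmetric statements for $\mathscr R^{-1}$, and the singleton/base computation for (a)$\Leftrightarrow$(d).

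One point in your write-up deserves emphasis, because it is the only place where the statement is delicate. The paper's literal definition $\E_{\mathscr R}^{fin}=[\mathscr R]^{<\infty}\cup\{\Delta_X\}$ is not an ideal when $X$ is infinite: it is closed neither under passing to (possibly infinite) subsets of $\Delta_X$ nor under unions of the form $\Delta_X\cup F$, so as written it is not an entourage structure. Your reading of it as the generated ideal $\{A\subseteq\mathscr R\mid A\setminus\Delta_X\text{ finite}\}$, with base $\{\Delta_X\cup F\mid F\in[\mathscr R]^{<\infty}\}$, is the only sensible interpretation, and it is genuinely needed for the proposition to hold: with the literal definition, (a)$\Rightarrow$(d) fails, since for a non-surjective $f$ with $f(X)$ infinite the image $(f\times f)(\Delta_X)=\Delta_{f(X)}$ lies in neither $[\mathscr R_Y]^{<\infty}$ nor $\{\Delta_Y\}$. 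Your final bookkeeping --- that $f\times f$ sends $\Delta_X$ into $\Delta_Y$ and finite sets to finite sets --- is exactly what makes the corrected statement go through.
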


We have discussed how one can construct entourage structures from reflexive relations. Now, we focus on the opposite process. Let $(X,\E)$ be an entourage space. Then we define $\mathscr R_\E=\bigcup\E$, which is a reflexive relation since $\Delta_X\in\E$. Moreover, if $\E$ is a semi-coarse structure, then $\mathscr R_\E$ is symmetric, and, if $\E$ is a quasi-coarse structure, then $\mathscr R_\E$ is transitive.

Note that, if $\mathscr R$ is a reflexive relation on $X$, then 
$$
\mathscr R=\mathscr R_{\E_\mathscr R}=\mathscr R_{\E^{fin}_\mathscr R}.
$$
Meanwhile, if $(X,\E)$ is an entourage space, then
\begin{equation}\label{eq:rel}
\E_{\mathscr R_\E}^{fin}\subseteq\E\subseteq\E_{\mathscr R_\E}.
\end{equation}
The inclusions in \eqref{eq:rel} can be strict. Consider, for example, $\R$ endowed with the usual metric $d$. Then $\E_{\mathscr R_{\E_d}}^{fin}\subsetneq\E_d\subsetneq\E_{\mathscr R_{\E_d}}$. Furthermore, note that $\E=\E_{\mathscr R_\E}$ if and only if $\bigcup\E\in\E$ and, thus, every entourage structure $\E$ on a finite set $X$ is a relation entourage structure.

\subsection{Graphic quasi-coarse structures}\label{sub:oriented_graph}
Let $\Gamma=(V,E)$ be a directed graph. 
Define the {\em path extended quasi-metric over $V$} to be the value:
$$ d(v,w)=\begin{cases}\begin{aligned}&\min\{\lvert\{(x_i,x_{i+1})\}_{i=0}^{n-1}\rvert\mid\forall i=0,\dots,n-1, (x_i,x_{i+1})\in E,\,x_0=v,\,x_n=w\} &\text{if it exists,}\\
&\infty &\text{otherwise.}\end{aligned}\end{cases}
$$
This is an extended quasi-metric and $\E_{d}$ is called {\em graphic quasi-coarse structure}.
	
Let The graphic quasi-coarse space can be extended to the points on the graph edges, by identifying every edge with the interval $[0,1]$ endowed to the relation quasi-coarse structure associated to the usual order $\leq$ on $[0,1]$. More precisely, if $\Gamma=(V,E)$ is a directed graph and $(v,w)\in E$, then we identify $0$ and $v$ and $1$ and $w$, respectively. This new quasi-coarse structure is called {\em extended graphic quasi-coarse structure}.

Let $f\colon\Gamma(V,E)\to\Gamma^\prime(V^\prime,E^\prime)$ be a map between oriented graphs. Then $f$ is said to be a {\em graph homomorphism} if, for every $(x,y)\in E$, either $f(x)=f(y)$ or $(f(x),f(y))\in E^\prime$. If $f\colon\Gamma(V,E)\to\Gamma^\prime(V^\prime,E^\prime)$ is a graph homomorphism, then $f$ sends directed paths into non-longer directed paths. Hence $f\colon(V,d)\to(V^\prime,d)$ is \emph{non-expanding} (i.e., $d(f(x),f(y))\leq d(x,y)$, for every $x,y\in V$), and thus $f\colon(V,\E_d)\to(V^\prime,\E_d)$ is bornologous. 

\subsection{Entourage hyperstructures}\label{sub:hyper}
Let $(X,\E)$ be an entourage structure. We define the following two entourage structures on $\mathcal P(X)$: 
$$
\mathcal H(\E)=\widehat{\{\mathcal H(E)\mid \Delta_X\subseteq E\in\E\}}\quad\mbox{and}\quad\exp\E=\widehat{\{\exp E\mid\Delta_X\subseteq E\in\E\}}=\mathcal H(\E)\cap\mathcal H(\E)^{-1},
$$
where, for every $E\in\E$,
$$
\mathcal H(E)=\{(A,B)\mid B\subseteq E[A]\}\quad\mbox{and}\quad\exp(E)=\mathcal H(E)\cap\mathcal H(E)^{-1},
$$
named {\em entourage hyperstructure} and {\em semi-coarse hyperstructure}, respectively. The way we obtained semi-coarse hyperstructures from entourage hyperstructures will be generalised in \S\ref{sub:fun_sym}. In \cite{DikProProZav}, the authors defined the notion of {\em hyperballean}. It is an equivalent way to define the semi-coarse hyperspace of a coarse space in terms of balleans. Moreover, hyperballeans already appeared in \cite{ProPro}, although, the authors just considered the subspace of the hyperballean whose support is the family of all non-empty bounded subsets.

First of all, note that, if $\E$ is an entourage structure, then both $\mathcal H(\E)$ and $\exp\E$ are entourage structures since $\mathcal H(E)\cap\mathcal H(F)=\mathcal H(E\cup F)$, for every $E,F\in\E$. More precisely, $\exp\E$ is actually a semi-coarse structure. Furthermore, if $\E$ is quasi-coarse structure, then $\mathcal H(\E)$ is a quasi-coarse structure, while $\exp\E$ is a coarse structure. In fact, for every $E,F\in\E$, if $(A,B)\circ(B,C)\in\mathcal H(E)\circ\mathcal H(F)$, then $B\subseteq E[A]$ and $C\subseteq F[B]$, which implies that $C\subseteq F[E[A]]=(F\circ E)[A]$ and so $(A,C)\in\mathcal H(F\circ E)$. Note that $\mathcal H(\E)$ is not a semi-coarse structure, unless the support $X$ of $\E$ is empty: in fact, $(X,\emptyset)\in\mathcal H(\Delta_X)$, although, for every $E\in\E$, $E[\emptyset]=\emptyset$. Moreover, even if we consider the subspace $(\mathcal P(X)\setminus\{\emptyset\},\mathcal H(\E)|_{\mathcal P(X)\setminus\{\emptyset\}})$, it is a semi-coarse structure if and only if $X$ satisfies (B3). In fact, $(X,\{x\})\in\mathcal H(\Delta_X)$, for every $x\in X$.


Every map $f\colon X\to Y$ between sets can be extended to a map $\overline f\colon\mathcal P(X)\to\mathcal P(Y)$ such that, for every $A\in\mathcal P(X)$, $\overline f(A)=f(A)\in\mathcal P(Y)$. 
\begin{proposition}
Let $f\colon(X,\E_X)\to(Y,\E_Y)$ be a between entourage spaces. The following properties are equivalent:
\begin{compactenum}[(a)]
\item $f\colon(X,\E_X)\to(Y,\E_Y)$ is bornologous;
\item $\overline f\colon(\mathcal P(X),\mathcal H(\E_X))\to(\mathcal P(Y),\mathcal H(\E_Y))$ is bornologous.
\end{compactenum}
\end{proposition}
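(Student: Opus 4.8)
The plan is to use the remark made just before the statement that bornologousness can be verified on a base, combined with the fact that $\{\mathcal H(E)\mid\Delta_X\subseteq E\in\E_X\}$ is a base of $\mathcal H(\E_X)$ (and the analogous family is a base of $\mathcal H(\E_Y)$). Since $\mathcal H(\E_Y)$ is closed under subsets, condition (b) is thus equivalent to: for every $E\in\E_X$ with $\Delta_X\subseteq E$ there exists $F\in\E_Y$ with $\Delta_Y\subseteq F$ and $(\overline f\times\overline f)(\mathcal H(E))\subseteq\mathcal H(F)$. I will prove both implications against this reformulation.

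For (a)$\to$(b), I would fix $E\in\E_X$ with $\Delta_X\subseteq E$ and set $F=(f\times f)(E)\cup\Delta_Y$, which lies in $\E_Y$ because $f$ is bornologous, $\Delta_Y\in\E_Y$, and $\E_Y$ is an ideal (closed under finite unions). The inclusion $(\overline f\times\overline f)(\mathcal H(E))\subseteq\mathcal H(F)$ then unwinds directly: given $(A,B)\in\mathcal H(E)$, i.e. $B\subseteq E[A]$, I would show $f(B)\subseteq F[f(A)]$ by tracing an arbitrary $b\in B$ back to some $a\in A$ with $(a,b)\in E$, so that $(f(a),f(b))\in(f\times f)(E)\subseteq F$. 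This is routine and presents no real difficulty.

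The converse (b)$\to$(a) is the more interesting direction, and the crux is the singleton embedding $x\mapsto\{x\}$, under which $\overline f(\{x\})=\{f(x)\}$ and, crucially, $(\{x\},\{y\})\in\mathcal H(E)$ holds exactly when $(x,y)\in E$ (because $\{y\}\subseteq E[\{x\}]=E[x]$ iff $y\in E[x]$). First I would reduce to the case $\Delta_X\subseteq E$, replacing $E$ by $E\cup\Delta_X\in\E_X$ and using that $\E_Y$ is closed under subsets to recover the general case. Applying the base form of (b) yields $F\in\E_Y$ with $(\overline f\times\overline f)(\mathcal H(E))\subseteq\mathcal H(F)$; then for each $(x,y)\in E$ the pair $(\{f(x)\},\{f(y)\})=(\overline f\times\overline f)(\{x\},\{y\})$ lies in $\mathcal H(F)$, which forces $(f(x),f(y))\in F$. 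Hence $(f\times f)(E)\subseteq F\in\E_Y$, and closure under subsets gives $(f\times f)(E)\in\E_Y$, as required.

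The only steps demanding attention — and therefore the nearest thing to an obstacle — are the base reductions: confirming that the generating families really serve as bases and that restricting to $E$ with $\Delta_X\subseteq E$ loses no generality. Once the singleton embedding is in place, the equivalence reduces to the simple observation that $\mathcal H(E)$ faithfully records $E$ on singletons, so that bornologousness on $\mathcal P(X)$ restricts to, and is controlled by, bornologousness on $X$.
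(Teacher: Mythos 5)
Your proof is correct and follows essentially the same route as the paper's: the forward direction via the inclusion $(\overline f\times\overline f)(\mathcal H(E))\subseteq\mathcal H(F)$ for a suitable $F$ built from $(f\times f)(E)$, and the converse via the observation that $(x,y)\in E$ if and only if $(\{x\},\{y\})\in\mathcal H(E)$. The only difference is that you spell out the base reductions and the adjustment by $\Delta_Y$ explicitly, which the paper leaves implicit.
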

\begin{proof}
As for the implication (a)$\to$(b), if $f$ is bornologous, then the inclusion $(\overline f\times\overline f)(\mathcal H(E))\subseteq\mathcal H((f\times f)(E))$, for every $E\in\E_X$, holds, and the thesis follows. Conversely, (b)$\to$(a) is a consequence of the fact that, for every entourage space $(Z,\E_Z)$, if $E\in\E_Z$ and $x,y\in Z$, then $(x,y)\in E$ if and only if $(\{x\},\{y\})\in\mathcal H(E)$.
\end{proof}


We conclude this discussion by very briefly relating to the classical theory of quasi-uniform spaces. The definition of a quasi-uniform structure on the power set of a quasi-uniform space is a classical construction, and it is very similar to the entourage hyperstructure we have just introduced. A wide introduction and a broad bibliography can be found in \cite{Kun}. Moreover, we refer to \cite{Isb} for an analogue of the semi-coarse hyperstructure in the framework of uniform spaces.


\subsection{Finitely generated monoids}\label{sub:fin_gen_mon}

A {\em magma} is a pair $(M,\cdot)$, where $M$ is a set and $\cdot\colon M\times M\to M$ is a map. A magma $(M,\cdot)$ is called {\em unitary} if there exists a {\em neutral  element} $e\in M$ such that $g\cdot e=e\cdot g=g$, for every $g\in M$. A unitary magma is a {\em monoid}, if $\cdot$ is associative. 

Let $M$ be a monoid. We say that $M$ is {\em finitely generated} if there exists a finite subset $\Sigma$ of $M$ such that, for every $g\in M$ there exist $n\in\N$ and $\sigma_1,\dots,\sigma_n\in\Sigma$ which satisfy $g=\sigma_1\cdots\sigma_n$.

In this subsection we want to briefly discuss the existence of precisely two inner quasi-coarse structures on a finitely generated monoid (see Proposition \ref{prop:fin_gen_mon}). The proof we give is similar to the case of finitely generated groups (see, for example, \cite{harpe}).

Let $M$ be a monoid which is finitely generated by $\Sigma$. Let us define the {\em (left) word extended quasi-metric $d_\Sigma^\lambda$} as follows: for every pair of elements $x,y\in M$,
$$
d_\Sigma^\lambda(x,y)=
\min\{n\mid\exists\sigma_1,\dots,\sigma_n\in\Sigma:y=x\sigma_1\cdots\sigma_n\}
$$
(we denote $\min\emptyset=\infty$). The map $d_\Sigma^\lambda\colon M\times M\to\N\cup\{\infty\}$ is actually an extended quasi-metric and thus $M$ can be endowed with the metric entourage structure $\E_{d_\Sigma^\lambda}$, which is a quasi-coarse structure. Moreover, note that $d_\Sigma^\lambda$ is {\em left-non-expanding}, i.e., for every $x,y,z\in M$, $d_\Sigma^\lambda(zx,zy)\leq d_\Sigma^\lambda(x,y)$. If $M$ is a finitely generated group, then $d_\Sigma^\lambda$ is {\em left-invariant} (i.e., for every $x,y,z\in M$, $d_\Sigma^\lambda(zx,zy)= d_\Sigma^\lambda(x,y)$). Similarly, one can define a right-non-expanding extended quasi-metric $d_\Sigma^\rho$ on $M$, called {\em(right) word quasi-metric}: for every $x,y\in M$,
$$
d_\Sigma^\rho(x,y)=
\min\{n\mid\exists\sigma_1,\dots,\sigma_n\in\Sigma:y=\sigma_1\cdots\sigma_nx\}.
$$

\begin{proposizione}\label{prop:fin_gen_mon}
	Let $M$ be a monoid and $\Sigma$ and $\Delta$ be two finite subsets of $M$ which generate the whole monoid. Then $\E_{d^\lambda_\Sigma}=\E_{d^\lambda_\Delta}$ and $\E_{d^\rho_\Sigma}=\E_{d^\rho_\Delta}$.
\end{proposizione}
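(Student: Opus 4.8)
The plan is to show that the identity map $id_M$ is bornologous in both directions between $(M,\E_{d_\Sigma^\lambda})$ and $(M,\E_{d_\Delta^\lambda})$; since the map is the identity, bornologousness one way gives $\E_{d_\Sigma^\lambda}\subseteq\E_{d_\Delta^\lambda}$ and the other way the reverse inclusion, forcing the two structures to coincide. Recalling that $\{E_n\mid n\in\N\}$ is a base of a metric entourage structure and that bornologousness can be tested on a base, it suffices to prove that for every $R\ge 0$ there is $S\ge 0$ with $d_\Delta^\lambda(x,y)\le S$ whenever $d_\Sigma^\lambda(x,y)\le R$, together with the symmetric statement obtained by exchanging $\Sigma$ and $\Delta$. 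Thus the whole proposition reduces to a Lipschitz-type comparison of the two quasi-metrics.

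The key step is the inequality $d_\Delta^\lambda(x,y)\le C\,d_\Sigma^\lambda(x,y)$ for a suitable constant $C$. First I would set $C=\max_{\sigma\in\Sigma}d_\Delta^\lambda(e,\sigma)$, where $e$ is the neutral element; this maximum is finite precisely because $\Sigma$ is finite and $\Delta$ generates $M$, so each $\sigma\in\Sigma$ admits some expression $\sigma=\delta_1\cdots\delta_k$ with $\delta_i\in\Delta$. Given $x,y$ with $d_\Sigma^\lambda(x,y)=n<\infty$, I would write a minimal expression $y=x\sigma_1\cdots\sigma_n$ with $\sigma_i\in\Sigma$ and replace each $\sigma_i$ by a $\Delta$-word of length at most $C$. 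Substituting, $y$ becomes a product of $x$ with at most $nC$ elements of $\Delta$, whence $d_\Delta^\lambda(x,y)\le nC=C\,d_\Sigma^\lambda(x,y)$ (the case $n=\infty$ being vacuous for the entourage condition). Note that this argument only manipulates the suffix appended to $x$ on the right, so it uses nothing about symmetry or invariance of the quasi-metric, only the one-sided left multiplication built into $d^\lambda$.

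Applying the same argument with the roles of $\Sigma$ and $\Delta$ exchanged yields $d_\Sigma^\lambda\le C'\,d_\Delta^\lambda$ for $C'=\max_{\delta\in\Delta}d_\Sigma^\lambda(e,\delta)$, so each basic entourage of either structure sits inside a basic entourage of the other and $\E_{d_\Sigma^\lambda}=\E_{d_\Delta^\lambda}$. The identity $\E_{d_\Sigma^\rho}=\E_{d_\Delta^\rho}$ is the exact mirror image: one repeats the comparison with right multiplication, setting $C=\max_{\sigma\in\Sigma}d_\Delta^\rho(e,\sigma)$ and writing $y=\sigma_1\cdots\sigma_n x$. I expect the only genuine point requiring care to be the finiteness of the comparison constant $C$; this is exactly where the finite generation hypothesis on $\Sigma$ and $\Delta$ enters, and everything else is a routine substitution that mirrors the classical bi-Lipschitz comparison of word metrics on finitely generated groups.
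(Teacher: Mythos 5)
Your proposal is correct and follows essentially the same route as the paper: the paper also sets $k=\max\{d_\Delta^\lambda(e,\sigma)\mid\sigma\in\Sigma\}$, substitutes a $\Delta$-word of length $k_i=d_\Delta^\lambda(e,\sigma_i)$ for each $\sigma_i$ in a minimal $\Sigma$-expression $y=x\sigma_1\cdots\sigma_n$, and deduces the Lipschitz bound $d_\Delta^\lambda(x,y)\le k\,d_\Sigma^\lambda(x,y)$, with the symmetric bound and the right-word case handled exactly as you describe. Your framing via bornologousness of the identity map in both directions is just a repackaging of the paper's direct inclusions $\E_{d_\Sigma^\lambda}\subseteq\E_{d_\Delta^\lambda}\subseteq\E_{d_\Sigma^\lambda}$.
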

\begin{proof}
	Define $k=\max\{d^\lambda_\Delta(e,\sigma)\mid\sigma\in\Sigma\}$ and $l=\max\{d^\lambda_\Sigma(e,\delta)\mid\delta\in\Delta\}$. Let $x,y\in M$, suppose that $d^\lambda_\Sigma(x,y)=n$ and let $\sigma_1,\dots,\sigma_n\in\Sigma$ such that $y=x\sigma_1\cdots\sigma_n$. Suppose that $\sigma_i=\delta_{i,1}\cdots\delta_{i,k_i}$, for every $i=1,\dots,n$, where $k_i=d_\Delta^\lambda(e,\sigma_i)$ and $\delta_{i,j}\in\Delta$, for every $i=1,\dots,n$ and $j=1,\dots,k_i$. Then
	$$
	y=x\sigma_1\cdots\sigma_n=x\delta_{1,1}\cdots\delta_{1,k_1}\delta_{2,1}\cdots\delta_{n,k_n}
	$$
	and so $d^\lambda_\Delta(x,y)\le\sum_{i=1}^nk_i\le nk=kd^\lambda_\Sigma(x,y)$. Hence, $\E_{d_\Sigma^\lambda}\subseteq\E_{d_{\Delta}^\lambda}$. Similarly, $d^\lambda_\Sigma(x,y)\le ld^\lambda_\Delta(x,y)$ and then $\E_{d_\Delta^\lambda}\subseteq\E_{d_{\Sigma}^\lambda}$. A similar proof shows that $\E_{d_\Sigma^\rho}=\E_{d_{\Delta}^\rho}$.
\end{proof}

It is possible to extend the notion of Cayley graph, which is a useful tool to represent a finitely generated group, in the framework of finitely generated monoids. Let $M$ be a monoid and $\Sigma\subseteq M$ a finite subset which generates $M$. Then the ({\em left}) {\em Cayley graph of $M$ associated to $\Sigma$} is the directed graph $\Cay^\lambda(M,\Sigma)=(M,E)$, where $(x,y)\in E$ if and only if there exists $\sigma\in\Sigma$ such that $y=x\sigma$ or, equivalently, $d^\lambda_\Sigma(x,y)=1$. Similarly $\Cay^\rho(M,\Sigma)$, the ({\em right}) {\em Cayley graph}, can be constructed. The quasi-coarse space $(M,\E_{d^\lambda_\Sigma})$ and the graphic quasi-coarse structure on $\Cay^\lambda(M,\Sigma)$ (see \S\ref{sub:oriented_graph}) are asymorphic. Similarly, $(M,\E_{d^\rho_\Sigma})$ and the graphic quasi-coarse structure on $\Cay^\rho(M,\Sigma)$ are asymorphic.


\subsection{Entourage structures on certain algebraic structures}\label{sub:algebraic}

In the classical coarse geometry, the geometry on finitely generated groups introduced via word metrics can be generalised to groups which are not finitely generated (see, for example, \cite{Sketchgroup,NicRos}). In this subsection we want to give a quick presentation of ways to introduce entourage structures on particular, more general algebraic structures.

A unitary magma $(M,\cdot)$ is a {\em loop} if, for every $a,b\in M$ there exist a unique $x\in M$ and a unique $y\in M$ such that
\begin{equation}\label{eq:quasigroup}
a\cdot x=b\quad\mbox{and}\quad y\cdot a=b.
\end{equation}
Since $e\cdot e=e$, \eqref{eq:quasigroup} implies that $e$ is the only neutral element. By \eqref{eq:quasigroup}, for every $g\in M$, there exist two elements $g^\rho, g^\lambda\in M$ such that $g\cdot g^\rho=e$ and $g^\lambda\cdot g=e$. Note that $(g^\rho)^\lambda=(g^\lambda)^\rho=g$, for every $g\in M$ (in fact, $(g^\rho)^\lambda\cdot g^\rho=e$, $g^\lambda(g^\lambda)^\rho=e$, $g\cdot g^\rho=e$, and $g^\lambda\cdot g=e$ and the conclusions follow by uniqueness of the solution of \eqref{eq:quasigroup}). A loop $(M,\cdot)$ has {\em right inverse property} if, for every $g,h\in M$, $(g\cdot h)\cdot h^\rho=g$. Similarly, a loop $(M,\cdot)$ has {\em left inverse property} if, for every $g,h\in M$, $g^\lambda\cdot(g\cdot h)=h$. A loop has {\em inverse property} if it has both right and left inverse property. A loop $M$ is said to have {\em two-side inverses} if $g^\lambda=g^\rho$, for every $g\in M$, and, in this case, we denote the inverse of $g$ by $g^{-1}$.

A \emph{unitary submagma} $N$ of a unitary magma $M$ is a subset $N\subseteq M$ that contains the identity of $M$ and it is closed under the operation. A unitary submagma $N$ is a \textit{subloop} of a loop $M$ if, for every parameters in $N$, the solutions of \eqref{eq:quasigroup} belongs to $N$. A unitary submagma $N$ of a monoid $M$ is called a \textit{submonoid}.


\begin{definizione}
	Let $M$ be an unitary magma and $\II$ be a family of subsets of $M$.
	\begin{compactenum}[$\bullet$]
		\item $\II$ is a {\em magmatic ideal} if it is an ideal on $M$ such that $\{e\}\in\II$ and, for every $\{x\},\{y\}\in\II$, $\{xy\}\in\II$.
		\item If $M$ is a monoid, $\II$ is a {\em monoid ideal} if it is a magmatic ideal and, for every $H,K\in\II$, $H\cdot K=\{h\cdot k\mid h\in H,\,k\in K\}\in\II$.
		\item If $M$ is a loop, a magmatic ideal $\II$ is a {\em right loop ideal} if, for every $F\in\II$, $F^\rho=\{g^\rho\mid g\in F\}\in\II$, $\II$ is a {\em left loop ideal} if, for every $F\in\II$, $F^\lambda=\{g^\lambda\mid g\in F\}\in\II$, and $\II$ is a {\em loop ideal} if it is both a left loop ideal and a right loop ideal. 
		\item If $M$ is a group, $\II$ is a {\em group ideal} (\cite{ProZar}) if it is both a monoid ideal and a loop ideal.
	\end{compactenum}
\end{definizione}

	
If $\II$ is a magmatic ideal on a unitary magma $M$, then $\bigcup\II$ is a unitary submagma. Similarly, $\bigcup\II$ is a subloop (a submonoid, or a subgroup) if $\II$ is a loop ideal (a monoid ideal, or a group ideal, respectively). Moreover, if $\II$ is a right loop ideal on a loop with right inverse property (a left loop ideal on a loop with left inverse property), then $\bigcup\II$ has right inverse property (left inverse property, respectively).
	
	
	

The leading examples of magmatic ideals are the finitary magmatic ideal, the finitary monoid ideal, the finitary (left, right) loop ideal, and the finitary group ideal.
\begin{example}
	Let $M$ be an unitary magma, then the family $\II=[M]^{<\infty}=\{F\subseteq M\mid \text{$M$ is finite}\}$ is a magmatic ideal, called {\em finitary magmatic ideal}. If $M$ is a monoid, then $\II$ is a monoid ideal, called {\em finitary monoid ideal}. If $M$ is a loop with (right) inverse property, then $\II$ is a (right) loop ideal, called {\em finitary (right) loop ideal}. Finally, if $M$ is a group, then $\II$ is a group ideal, called {\em finitary group ideal}.
\end{example}

Let $M$ be a unitary magma and $\II$ a magmatic ideal on $M$. If $A$ is a subset of $M\times M$, define $M\cdot A=\{(mx,my)\mid m\in M,\,(x,y)\in A\}$. Then we can define the {\em left magmatic entourage structure} $\E_{\II}$ on $M$ as follows:
$$
\E^\lambda_{\II}=\widehat{\{E_I^\lambda\mid I\in\II\}},\quad\mbox{where, for every $I\in\II$,}\quad E_I^\lambda=M(\{e\}\times I)=\{(x,xk)\mid x\in M,k\in I\}.
$$
Similarly, we can define the {\em right magmatic entourage structure} $\E_{\II}^\rho$, where the action of $M$ is on the right:
$$
\E^\rho_{\II}=\widehat{\{E_I^\rho\mid I\in\II\}},\quad\mbox{where, for every $I\in\II$,}\quad E_I^\rho=(\{e\}\times I)M.
$$

	

\begin{proposition}Let $M$ be a unitary magma and $\II$ a magmatic ideal on $M$.
	\begin{compactenum}[(a)]
		\item $\E_{\II}^\lambda$ and $\E_{\II}^\rho$ are entourage structures.
		\item If $M$ is a loop with right inverse property and $\II$ is a right loop ideal (with the left inverse property and $\II$ is a left loop ideal), then $\E_{\II}^\lambda$ is a semi-coarse structure, called {\em left loop semi-coarse structure} ($\E_{\II}^\rho$ is a semi-coarse structure, called {\em right loop semi-coarse structure}, respectively). 
		\item If $M$ is a monoid and $\II$ is a monoid ideal, then $\E_{\II}^\lambda$ and $\E_{\II}^\rho$ are quasi-coarse structures, called {\em left monoid quasi-coarse structure} and {\em right monoid quasi-coarse structure}, respectively.
		\item If $M$ is a group, then $\E_{\II}^\lambda$ and $\E_{\II}^\rho$ are coarse structures, called {\em left group coarse structure} and {\em right group coarse structure}, respectively.
	\end{compactenum}
\end{proposition}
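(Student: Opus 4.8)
The plan is to verify every clause directly on the canonical base $\{E_I^\lambda\mid I\in\II\}$ (and its right-handed analogue $\{E_I^\rho\mid I\in\II\}$), exploiting that each of the three relevant operations on entourages---union, inversion, and composition---is monotone under inclusion, so each closure property can be tested on a base and then transported to arbitrary members of the structure using the downward closure built into $\widehat{\,\cdot\,}$. The whole argument amounts to translating operations on entourages into operations on the ideal $\II$, through the identities $E_{\{e\}}^\lambda=\Delta_M$, $E_I^\lambda\cup E_J^\lambda=E_{I\cup J}^\lambda$, and $E_I^\lambda\circ E_J^\lambda=E_{I\cdot J}^\lambda$ (and their $\rho$-counterparts).

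For (a) I would first note that $E_{\{e\}}^\lambda=\{(x,xe)\mid x\in M\}=\Delta_M$, since $e$ is neutral and $\{e\}\in\II$ by the definition of a magmatic ideal, so the diagonal lies in $\E_\II^\lambda$; closure under subsets is automatic, while closure under finite unions is exactly $E_I^\lambda\cup E_J^\lambda=E_{I\cup J}^\lambda$ together with $I\cup J\in\II$. For (b) it suffices to treat a single base entourage: given $(xk,x)\in(E_I^\lambda)^{-1}$ with $k\in I$, put $y=xk$ and apply the right inverse property $(gh)h^\rho=g$ with $g=x$, $h=k$ to obtain $x=yk^\rho$, so $(xk,x)=(y,yk^\rho)\in E_{I^\rho}^\lambda$. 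Hence $(E_I^\lambda)^{-1}\subseteq E_{I^\rho}^\lambda$, and since $\II$ is a right loop ideal we have $I^\rho\in\II$, which gives the semi-coarse property. The parenthetical claim is the mirror image, using the left inverse property $g^\lambda(gh)=h$ to show $(E_I^\rho)^{-1}\subseteq E_{I^\lambda}^\rho$ with $I^\lambda\in\II$.

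For (c) I would compute the composite on the base: if $(x,y)\in E_I^\lambda$ and $(y,z)\in E_J^\lambda$, then $y=xi$ and $z=yj=(xi)j=x(ij)$ by associativity, with $ij\in I\cdot J$; since $\II$ is a monoid ideal, $I\cdot J\in\II$, whence $E_I^\lambda\circ E_J^\lambda\subseteq E_{I\cdot J}^\lambda\in\E_\II^\lambda$, and monotonicity of composition yields the quasi-coarse property (the right version giving $E_I^\rho\circ E_J^\rho\subseteq E_{J\cdot I}^\rho$). Finally (d) follows by combining (b) and (c): a group is at once a monoid and a loop with inverse property, and a group ideal is simultaneously a monoid ideal and a left and right loop ideal, so $\E_\II^\lambda$ and $\E_\II^\rho$ are both semi-coarse and quasi-coarse, hence coarse.

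The step I expect to require genuine care is (b). Because a loop need not be associative, one cannot reduce $(E_I^\lambda)^{-1}$ to a base entourage through a chain of multiplications as in (c); instead one must isolate precisely the right inverse property $(gh)h^\rho=g$, and nothing stronger, and verify that this single identity already lands every inverted pair inside $E_{I^\rho}^\lambda$. The other delicate point is the left/right bookkeeping: the right inverse property must be paired with the left structure $\E_\II^\lambda$ and the left inverse property with $\E_\II^\rho$, and mismatching them would silently break the argument. (I also note that, for (d) to hold as stated, $\II$ must be assumed to be a group ideal, which is what the terminology ``group coarse structure'' presupposes.)
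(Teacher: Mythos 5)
Your proof is correct and, for items (a)--(c), follows essentially the same route as the paper: the paper proves (b) via the identical computation $(xk,x)=(xk,(xk)k^\rho)\in E_{K^\rho}^\lambda$, giving $(E_K^\lambda)^{-1}\subseteq E_{K^\rho}^\lambda$, and (c) via the same inclusion $E_F^\lambda\circ E_K^\lambda\subseteq E_{FK}^\lambda$, with (a) dismissed as trivial. The only divergence is item (d): the paper simply cites the literature (Nicas--Rosenthal), whereas you derive it internally as the conjunction of (b) and (c); this is a clean, self-contained alternative, and your parenthetical observation is a genuine catch --- as literally stated, (d) would be false for a mere magmatic ideal (e.g.\ on the group $\Z$, the ideal of all subsets of $\N\cup F$ with $F$ finite is magmatic, yet $(E_{\N}^\lambda)^{-1}$ lies in no $E_K^\lambda$), so the implicit hypothesis that $\II$ is a group ideal, which your derivation makes explicit, is indeed needed.
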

\begin{proof}
	Item (a) is trivial and item (d) has already been proved (\cite{NicRos}). 
	
	(b) Let $M$ be a loop with right inverse property and $\II$ be a right loop. Let $K\in\II$. Then, for every $(x,xk)\in E_K^\lambda$, where $x\in M$ and $k\in K$, $(xk,x)=(xk,(xk)k^\rho)\in E_{K^\rho}^\lambda$. Hence $(E_K^\lambda)^{-1}\subseteq E_{K^\rho}^\lambda\in\E_{\II}^\lambda$. We can prove similarly the other assertion.
	
	Finally, item (c) follows from the observation that, for every $F,K\in\II$, $E_F^\lambda\circ E_K^\lambda\subseteq E_{FK}^\lambda$.
\end{proof}

Of course, if $M$ is an abelian unitary magma and $\II$ is a magmatic ideal, then $\E_{\II}^\lambda=\E_{\II}^\rho$. Note that, if $M$ is an abelian loop with the right inverse property, then it has the inverse property. In the next remark we discuss a situation in which the left and the right magmatic entourage structure are asymorphic, even though the may not be equal.

\begin{remark}
	Let $G$ be a group and $\II$ be a magmatic ideal. Note that $\II^{-1}=\{K^{-1}\mid K\in\II\}$ is still a magmatic ideal and, more precisely, if $\II$ is a loop ideal or a monoid ideal, then so it is $\II^{-1}$. 
	Consider the map $i\colon G\to G$ such that $i(g)=g^{-1}$, for every $g\in G$. Then $i\colon(G,\E_\II^\lambda)\to(G,\E_{\II^{-1}}^\rho)$ is an asymorphism. In fact, for every $K\in\II$ and every $(x,xk)\in E_K^\lambda$, $(i\times i)(x,xk)=(x^{-1},k^{-1}x^{-1})\in E_{K^{-1}}^\rho$. The same conclusion holds if $\II$ is a loop ideal, a monoid ideal, or a group ideal. In particular, if $\II$ is a loop ideal or a group ideal, then $\II=\II^{-1}$ and thus $(G,\E_\II^\lambda)$ and $(G,\E_{\II}^\rho)$ are asymorphic. Hence, on a group $G$, if $\II$ is a loop ideal, we simply write $\E_\II$ instead of both $\E_\II^\lambda$ and $\E_{\II}^\rho$ when there is no risk of ambiguity.
\end{remark}


A family of maps $\mathcal F$ between two entourage spaces $(X,\E_X)$ and $(Y,\E_Y)$ is {\em equi-bornologous} if, for every $E\in\E_X$, there exists $F\in\E_Y$ such that $(f\times f)(E)\subseteq F$, for every $f\in\mathcal F$. 

If $M$ is an unitary magma and $\E$ is an entourage structure on it, we define the following families of maps, which are the \textit{left} and the \textit{right shifts} in $M$:
\begin{equation}\label{eq:shifts}
\mathcal S_M^\lambda=\{s_x^\lambda\mid x\in M\}\quad\mbox{and}\quad\mathcal S_M^\rho=\{s_x^\rho\mid x\in M\},\text{ where, for every $x,y\in M$, $s_x^\lambda(y)=xy$ and $s_x^\rho(y)=yx$}.
\end{equation} 

\begin{remark}\label{rem:equi-borno}
\begin{compactenum}[(a)]
\item For every finitely generated monoid $M$, if $\Sigma$ is a finite generating set, then the quasi-coarse space $(M,\E_{d^\lambda_\Sigma})$ makes the family $\mathcal S_M^\lambda=\{s_x^\lambda\mid x\in M\}$ equi-bornologous, since $d_\Sigma^\lambda$ is left-non-expanding.
\item Let $M$ be a monoid and $\II$ be a monoid ideal on $M$. Then $\mathcal S_M^\lambda$ and $\mathcal S_M^\rho$ are equi-bornologous if $M$ is endowed with the left monoid quasi-coarse structure $\E_\II^\lambda$ and the right monoid quasi-coarse structure $\E_\II^\rho$, respectively. In fact, let $e\in K\in\II$. Then, for every $x\in M$ and every $(y,yk)\in E_K^\lambda$, 
$$
(s_{x}^\lambda\times s_{x}^\lambda)(y,yk)=(xy,xyk)=xy(e,k)\in E_K^\lambda.
$$
\item The shifts of the unitary magmas $(\Z\cup\{e\},-,e)$ and $(\Q\cup\{e\},/,e)$ are equi-bornologous, once those magmas are equipped with the finitary magmatic entourage structure. Those result follow from a more general statement.
\item Let $M$ be a unitary magma such that there exists a map $r\colon M\to M$ with the property that $a(bc)=(ab)r(c)$, for every $a,b,c\in M$. Hence, we claim that, for every magmatic ideal $\II$ on $M$ such that $r(\II)=\{r(K)\mid K\in\II\}\subseteq\II$, the family of left shifts is equi-bornologous.
	Let $e\in K\in\II$ be a generic element of the group ideal. Then, for every $a,b\in M$,
	$$
	s_a^\lambda(B_\II(b,K))=s_a^\lambda(bK)=a(bK)=(ab)r(K)=s_a^\lambda(b)r(K)=B_\II(s_a^\lambda(b),r(K)),
	$$
	which concludes the proof, since $r(K)\in\II$ and $e=r(e)\in r(K)$.
\end{compactenum}
\end{remark}
The next proposition shows the importance of having the families of lift (right) shifts, defined in \eqref{eq:shifts}, equi-bornologous. We state the result just for $\mathcal S_M^\lambda$, but similar conclusions hold also for $\mathcal S_M^\rho$.

\begin{proposition}\label{prop:ideals_vs_entou_equibound}
	Let $M$ be an unitary magma and $\E$ be an entourage structure over $M$ such that $\mathcal S_M^\lambda$ is equi-bornologous. Let $\II=\{E[e]\mid E\in\E\}$. Then:
	\begin{compactenum}[(a)]
		\item $\II$ is a magmatic ideal and $\E_\II^\lambda\subseteq\E$;
		\item if $M$ is a loop with the inverse property and two-side inverses and $\E$ is a semi-coarse structure, then $\II$ is a loop ideal and $\E_\II^\lambda=\E$;
		\item if $M$ is a monoid and $\E$ is a quasi-coarse structure, then $\II$ is a monoid ideal and $\E_\II^\lambda\subseteq\E$;
		\item if $M$ is a group and $\E$ is a coarse structure, then $\II$ is a group ideal and $\E_\II^\lambda=\E$.
	\end{compactenum}
\end{proposition}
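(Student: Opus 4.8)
The plan is to prove the two assertions in each item in parallel, since the ideal structure of $\II$ and the comparison between $\E_\II^\lambda$ and $\E$ rest on the same mechanism: translate a fixed entourage by every left shift and feed the result into the equi-bornologousness of $\mathcal{S}_M^\lambda$. I would first dispatch the purely ideal-theoretic content, which uses no hypothesis on the shifts. Closure of $\II$ under subsets follows from the identity $(E\cap(M\times B))[e]=E[e]\cap B=B$ whenever $B\subseteq E[e]$, so $B=\bigl(E\cap(M\times B)\bigr)[e]\in\II$; closure under finite unions follows from $(E_1\cup E_2)[e]=E_1[e]\cup E_2[e]$ together with $\E$ being an ideal on $M\times M$; and $\{e\}\in\II$ follows from $\Delta_M[e]=\{e\}$ and $\Delta_M\in\E$. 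This already shows that $\II$ is an ideal on $M$ containing $\{e\}$ in all four items.

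Next I would establish the inclusion $\E_\II^\lambda\subseteq\E$, which is the clean place where equi-bornologousness enters. Fix $I=E[e]\in\II$ with $E\in\E$, so $\{e\}\times I\subseteq E$. Writing $E_I^\lambda=\bigcup_{z\in M}(s_z^\lambda\times s_z^\lambda)(\{e\}\times I)$ and bounding each summand by $(s_z^\lambda\times s_z^\lambda)(E)$, equi-bornologousness yields a single $F\in\E$ with $(s_z^\lambda\times s_z^\lambda)(E)\subseteq F$ for all $z$; hence $E_I^\lambda\subseteq F\in\E$, and closure under subsets gives $E_I^\lambda\in\E$. Since the $E_I^\lambda$ form a base of $\E_\II^\lambda$, this settles the inclusion asserted in (a) and (c) and half of (b) and (d).

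For the reverse inclusion $\E\subseteq\E_\II^\lambda$ in the invertible items (b) and (d), I would take $E\in\E$, choose $F$ from equi-bornologousness, and for each $(u,v)\in E$ translate by the inverse: $(s_{u^{-1}}^\lambda\times s_{u^{-1}}^\lambda)(u,v)=(e,u^{-1}v)\in F$ (using $u^{-1}u=e$ and the inverse property $u^{-1}(uk)=k$), so $u^{-1}v\in F[e]=:I\in\II$ with a single $I$ serving all of $E$; then $(u,v)=(u,u(u^{-1}v))\in E_I^\lambda$, whence $E\subseteq E_I^\lambda$ and $E\in\E_\II^\lambda$. The multiplicative closure of $\II$ I would obtain from composition: starting from $\{x\},\{y\}\in\II$, i.e. $(e,x)\in E_1$ and $(e,y)\in E_2$, apply $s_x^\lambda$ to $(e,y)$ to get $(x,xy)\in F$, and in items (c) and (d) the quasi-coarse (resp. coarse) axiom gives $(e,xy)\in E_1\circ F\in\E$, so $\{xy\}\in\II$; the same composition bound $H\cdot K\subseteq(E_1\circ F)[e]$ upgrades this to the monoid-ideal property. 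The loop-ideal closure in (b) and (d) would come from the semi-coarse axiom: for $F\in\II$ and $g\in F$ one has $(g,e)\in E^{-1}$, and $s_{g^\lambda}^\lambda$ sends this to $(e,g^\lambda)\in F'$, so $F^\lambda\in\II$ (and symmetrically $F^\rho$).

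The main obstacle, I expect, is exactly the multiplicative (magmatic) closure of $\II$: this is the one place where equi-bornologousness alone is insufficient, because after translating one only reaches $(x,xy)$ based at $x$ rather than at $e$, and bridging back to $e$ genuinely requires either composing two entourages or invoking the loop inverse identities. This is precisely why the quasi-coarse/coarse hypothesis is needed in (c)/(d) and why this step deserves the most care under the weaker hypotheses of (a) and (b); I would verify it last and most carefully, since everything else reduces to the translation-and-equi-bornologousness template above.
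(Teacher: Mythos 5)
Your proposal reproduces the paper's proof step for step on everything the paper actually establishes: the ideal bookkeeping for $\II$ (subsets, finite unions, $\{e\}$), the inclusion $\E_\II^\lambda\subseteq\E$ by choosing one uniform $F\in\E$ with $(s_z^\lambda\times s_z^\lambda)(E)\subseteq F$ for all $z$ and observing $E_{E[e]}^\lambda\subseteq F$, the inverse-translation argument $(x,y)=(s_x^\lambda\times s_x^\lambda)(e,x^{-1}y)$ giving $\E\subseteq\E_\II^\lambda$ in (b) and (d), the composition bound $E[e]\cdot F[e]\subseteq(E\circ E')[e]$ for the monoid-ideal property in (c) and (d), and the uniform closure $E[e]^{-1}\subseteq F'[e]$ for the loop-ideal inverse property in (b) and (d). All of these steps are correct and coincide with the paper's arguments.

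The step you deferred --- the singleton-product axiom $\{x\},\{y\}\in\II\Rightarrow\{xy\}\in\II$ of a magmatic ideal under the weak hypotheses of (a) and (b) --- is indeed a genuine gap, but you should know it is a gap in the proposition itself, not something you could have closed. The paper dismisses the magmatic-ideal claim of (a) as ``trivial'' (justifying only closure under unions) and in (b) verifies only inverse-closure; the singleton-product axiom is never addressed, and it is in fact false under those hypotheses. For (a): take $M=(\N,+)$, $E_1=\Delta_{\N}\cup\{(n,n+1)\mid n\in\N\}$, and $\E=\widehat{\{E_1\}}$; left shifts are equi-bornologous because $(s_z^\lambda\times s_z^\lambda)(E_1)\subseteq E_1$, yet $\II=\mathcal P(\{0,1\})$ contains $\{1\}$ but not $\{1+1\}=\{2\}$. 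For (b): take $M=\Z$ (a group, so every loop inverse identity holds) with the symmetric entourage $E_1=\Delta_{\Z}\cup\{(n,n+1),(n+1,n)\mid n\in\Z\}$ and $\E=\widehat{\{E_1\}}$, a semi-coarse structure with equi-bornologous shifts; again $\{1\}\in\II=\mathcal P(\{-1,0,1\})$ but $\{2\}\notin\II$, so $\II$ is not a magmatic (hence not a loop) ideal, even though the equality $\E_\II^\lambda=\E$ does hold. So your diagnosis of the ``main obstacle'' was exactly right: bridging $(x,xy)$ back to the identity genuinely requires composing entourages, which is the quasi-coarse axiom available only in (c) and (d); but, contrary to your hope, the loop inverse identities cannot substitute for composition, and under the hypotheses of (a) and (b) only the ideal-plus-$\{e\}$ part, the inclusion respectively equality of the entourage structures, and (in (b)) the inverse-closure of $\II$ survive.
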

\begin{proof}
	The first assertion of item (a) is trivial since $E[e]\cup F[e]=(E\cup F)[e]$, for every $E,F\in\E$. Let now $F\in\E$ be an arbitrary entourage, and so $F[e]$ be an arbitrary element of $\II$. 
	Since $\mathcal S_M^\lambda$ is equi-bornologous, there exists $F^\prime\in\E$ such that, for every $x\in M$, $(s_x^\lambda\times s_x^\lambda)(F)\subseteq E^\prime$. Then, for every $(x,y)\in E_{F[e]}^\lambda$, there exists $k\in F[e]$ such that $y=xk$. Then 
	$$
	(x,y)=(x,xk)=(s_x^\lambda\times s_x^\lambda)(e,k)\in (s_x^\lambda\times s_x^\lambda)(F)\subseteq F^\prime
	$$
	and so $E_{F[e]}^\lambda\subseteq F^\prime$. Hence, $\E^\lambda_\II\subseteq\E$.
	
	(b) Let $E\in\E$ and let $x$ be an arbitrary element of $E[e]$. Then,
	$$
	(e,x^{-1})=(s_{x^{-1}}^\lambda\times s_{x^{-1}}^\lambda)(x,e)\in(s_{x^{-1}}^\lambda\times s_{x^{-1}}^\lambda)(E^{-1})\subseteq F,
	$$
	where $F$ can be chosen independently from the choice of $x$ in $E[e]$, since $\mathcal S_M^\lambda$ is equi-bornologous. Thus $E[e]^{-1}\subseteq F[e]\in\II$. 
	
	Consider now an arbitrary entourage $E\in\E$. We want to show that there exists $F\in\E$, such that $E\subseteq E_{F[e]}^\lambda$. Let $(x,y)\in E$ and denote by $F\in\E$ an entourage such that $(s_{z}^\lambda\times s_{z}^\lambda)(E)\subseteq F$, for every $z\in M$. Then
	\begin{equation}\label{eq:*}
	(x,y)=(s_{x}^\lambda\times s_{x}^\lambda)(e,x^{-1}y),\quad\mbox{where}\quad (e,x^{-1}y)=(s_{x^{-1}}^\lambda\times s_{x^{-1}}^\lambda)(x,y)\subseteq F,
	\end{equation}
	and thus $(x,y)\in E_{F[e]}$. Note that in \eqref{eq:*} we used that $M$ has the inverse property and two-sided inverses.
	
	(c) Thanks to item (a), we only need to show that $\II$ is a monoid ideal. Take $E,F\in\E$ and consider $E[e]\cdot F[e]$. Let $x\in E[e]$ and $y\in F[e]$ be two arbitrary elements, which means that $(e,x)\in E$ and $(e,y)\in F$. Denote by $E^\prime\in\E$ an entourage such that $(s_{x}^\lambda\times s_{x}^\lambda)(F)\subseteq E^\prime$, for every $x\in M$. Then
	$$
	(e,xy)=(e,x)\circ(x,xy)\in E\circ (s_{x}^\lambda\times s_{x}^\lambda)(F)\subseteq E\circ E^\prime\in\E,
	$$
	which shows that $xy\in(E\circ E^\prime)[e]$, and thus $E[e]F[e]\subseteq(E\circ E^\prime)[e]\in\II$.
	
	Finally, item (d) descends from items (b) and (c).
\end{proof}

\begin{remark}
Let $M$ be a monoid generated by a finite subset $\Sigma$. By Remark \ref{rem:equi-borno} and Proposition \ref{prop:ideals_vs_entou_equibound}, $\E_\II^\lambda\subseteq\E_{d_{\Sigma}^\lambda}$, where $\II$ is the family of all subsets of $(M,d_\Sigma^\lambda)$ \textit{bounded from $e$}, i.e., contained in some ball centred at $e$. More precisely, $\II=[M]^{<\infty}$. We claim that $\E_\II^\lambda=\E_{d_{\Sigma}^\lambda}$. Let $R\ge 0$ and define $F_R=\{\sigma_1\cdots\sigma_n\mid n\leq R,\,\sigma_i\in\Sigma,\forall i=1,\dots,m\}$. Then $F_R\in\II$. Moreover, if $d_\Sigma^\lambda(x,y)\leq R$, then $y\in xF_R$. Hence $E_R\subseteq E_{F_R}^\lambda$, which shows the desired equality. Similarly, $\E_\II^\rho=\E_{d_{\Sigma}^\rho}$.
\end{remark}

Let $f\colon M\to N$ be a map between two unitary magmas. Then $f$ is called a {\em homomorphism} if, for every $g,h\in M$, $f(gh)=f(g)f(h)$ and $f(e_M)=e_N$. Propositions \ref{prop:homo_borno} and \ref{prop:homo_eff_proper} are relaxed versions of classical results in the framework of coarse structures on groups (\cite{NicRos}).

\begin{proposition}\label{prop:homo_borno}
	Let $f\colon M\to N$ be a homomorphism between unitary magmas, and $\II_M$ and $\II_N$ be two magmatic ideals on $M$ and $N$, respectively. Then the following properties are equivalent:
	\begin{compactenum}[(a)]
		\item $f(\II_M)=\{f(K)\mid K\in\II\}\subseteq\II_N$;
		\item $f\colon(M,\E_{\II_M}^\lambda)\to(N,\E_{\II_N}^\lambda)$ is bornologous;
		\item $f\colon(M,\E_{\II_M}^\rho)\to(N,\E_{\II_N}^\rho)$ is bornologous.
	\end{compactenum}
\end{proposition}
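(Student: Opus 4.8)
The plan is to prove the equivalence of the three conditions by first establishing that the left magmatic entourage structure $\E_{\II}^\lambda$ is completely determined, as a base, by the generators $E_I^\lambda$ for $I\in\II$, so that bornologousness need only be checked on these base entourages (as noted right after the definition of the morphisms). I would begin with the implication (a)$\to$(b). Assuming $f(\II_M)\subseteq\II_N$, I would take an arbitrary base entourage $E_I^\lambda\in\E_{\II_M}^\lambda$ with $I\in\II_M$ and compute the image under $f\times f$. For a generic pair $(x,xk)\in E_I^\lambda$, with $x\in M$ and $k\in I$, the homomorphism property gives $(f\times f)(x,xk)=(f(x),f(x)f(k))$, and since $f(k)\in f(I)$ this lands in $E_{f(I)}^\lambda$. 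Thus $(f\times f)(E_I^\lambda)\subseteq E_{f(I)}^\lambda$, and because $f(I)\in\II_N$ by hypothesis, $E_{f(I)}^\lambda\in\E_{\II_N}^\lambda$, establishing bornologousness.

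For the converse (b)$\to$(a), I would run the argument backwards: assume $f$ is bornologous and fix $K\in\II_M$. Recall that $\II_M=\{E[e]\mid\dots\}$ in spirit, but more directly, the element $K$ appears as $E_K^\lambda[e]=\{e\}\cdot K=K$ (using that $e$ is the neutral element of $M$). Applying bornologousness to $E_K^\lambda$, there is $F\in\E_{\II_N}^\lambda$ with $(f\times f)(E_K^\lambda)\subseteq F$; shrinking $F$ to a base entourage one may assume $F=E_J^\lambda$ for some $J\in\II_N$. Evaluating at the point $e$, note $(e,k)\in E_K^\lambda$ for every $k\in K$, so $(f(e),f(k))=(e_N,f(k))\in E_J^\lambda$, which forces $f(k)\in E_J^\lambda[e_N]=J\in\II_N$. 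Since $\II_N$ is closed under taking subsets (being an ideal), $f(K)\subseteq J$ yields $f(K)\in\II_N$, giving (a).

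The equivalence (a)$\Leftrightarrow$(c) is entirely symmetric, replacing the left shifts and left generators $E_I^\lambda=\{(x,xk)\}$ by the right versions $E_I^\rho=\{(x,kx)\}$; the homomorphism identity $f(kx)=f(k)f(x)$ plays the same role, and evaluating the right structure at $e$ recovers $E_K^\rho[e]=K$ as well. I would simply remark that the proof of (a)$\Leftrightarrow$(c) is obtained \emph{mutatis mutandis}, so that condition (a), being shared, links all three.

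The main obstacle I anticipate is in the converse direction, specifically the step of reducing an arbitrary bounding entourage $F\in\E_{\II_N}^\lambda$ to a base entourage $E_J^\lambda$ and then extracting $f(K)$ from the fibre over the neutral element. One must be careful that evaluating at $e$ genuinely recovers the full ideal element: the identity $E_K^\lambda[e]=K$ relies on $e$ being a two-sided neutral element of the unitary magma, which holds by definition, but it is worth verifying that no associativity is silently needed here (it is not, since we only use single products $e\cdot k=k$ and the homomorphism condition $f(e_M)=e_N$). Everything else is a routine unwinding of the definitions of the base entourages and of what it means for $f\times f$ to respect them.
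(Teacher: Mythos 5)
Your proposal is correct and follows essentially the same route as the paper: the forward direction via the computation $(f\times f)(x,xk)=(f(x),f(x)f(k))\in E_{f(K)}^\lambda$, and the converse by evaluating the image entourage at the neutral element to recover $f(K)=f(E_K^\lambda[e])$ inside some $J\in\II_N$ and then using closure of the ideal under subsets. The paper treats (c) by the same symmetric argument you invoke, so there is nothing to add.
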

\begin{proof}
The implications (b)$\to$(a) and (c)$\to$(a) are trivial. In fact, for every $K\in\II_M$, $f(K)=f(E_K^\lambda[e])\subseteq ((f\times f)(E_K^\lambda))[e]$. Let us now prove (a)$\to$(b), and (a)$\to$(c) can be similarly shown. Let $K\in\II$. Then, for every $(x,xk)\in E_K^\lambda$, $(f\times f)(x,xk)=(f(x),f(x)f(k))\in E_{f(K)}^\lambda$, and thus $(f\times f)(E_K^\lambda)\subseteq E_{f(K)}^\lambda\in\E_{\II_N}^\lambda$.
\end{proof}

\begin{fact}\label{fact:homo_loop}
	Let $M$ and $N$ be two loops and $f\colon M\to N$ be a homomorphism between them.
	\begin{compactenum}[(a)]
		\item $f(x)^\lambda=f(x^\lambda)$ and $f(x)^\rho=f(x^\rho)$, for every $x\in M$.
		\item $f(M)$ is a {\em subloop} of $N$.
		\item If $M$ has two-sided inverses, then $f(N)$ has also two-sided inverses.
	\end{compactenum}
\end{fact}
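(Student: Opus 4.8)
The plan is to handle the three items in order, using the uniqueness of solutions to the loop equations \eqref{eq:quasigroup} as the central tool throughout, together with the homomorphism identities $f(gh)=f(g)f(h)$ and $f(e_M)=e_N$.

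For item (a) I would simply push the defining relations through $f$. Starting from $x\cdot x^\rho=e_M$ and applying $f$ gives $f(x)\cdot f(x^\rho)=e_N$; on the other hand, by the definition of the right inverse in $N$ we also have $f(x)\cdot f(x)^\rho=e_N$. Since the equation $f(x)\cdot z=e_N$ has a unique solution in the loop $N$, we conclude $f(x^\rho)=f(x)^\rho$. The identity $f(x^\lambda)=f(x)^\lambda$ is the mirror image, obtained from $x^\lambda\cdot x=e_M$ and the uniqueness of the solution of $z\cdot f(x)=e_N$.

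For item (b) I would first observe that $f(M)$ is a unitary submagma: it contains $e_N=f(e_M)$ and is closed under the operation because $f(g)f(h)=f(gh)\in f(M)$. To upgrade this to a subloop, I would take parameters $f(a),f(b)\in f(M)$ and consider the equations $f(a)\cdot z=f(b)$ and $z\cdot f(a)=f(b)$ in $N$. Solving $a\cdot x=b$ in $M$ first and applying $f$ yields the solution $f(x)$, which lies in $f(M)$; by uniqueness of the solution in $N$ this \emph{is} the solution, hence it belongs to $f(M)$, and symmetrically for the right equation. This is exactly the defining property of a subloop.

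Finally, for item (c) (reading $f(N)$ as $f(M)$, which is the only sensible interpretation since $f$ is defined on $M$) I would combine (a) and (b). The key point is that, because $f(M)$ is a subloop, the inverses of an element $f(x)$ computed inside $f(M)$ coincide with those computed in the ambient loop $N$: the solution in $N$ of $f(x)\cdot z=e_N$ already lies in $f(M)$, so the right inverse of $f(x)$ in $f(M)$ equals $f(x)^\rho=f(x^\rho)$, and dually on the left. If $M$ has two-sided inverses, then $x^\lambda=x^\rho$, whence $f(x^\lambda)=f(x^\rho)$ and the left and right inverses of $f(x)$ agree. The main obstacle here is purely one of bookkeeping: one must keep the notion of inverse unambiguous, carefully distinguishing the inverses taken in $N$ from those taken inside the subloop $f(M)$ and verifying they coincide before concluding. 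Everything else reduces to applying $f$ to a defining identity and invoking uniqueness of solutions in a loop.
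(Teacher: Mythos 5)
Your proof is correct and follows essentially the same route as the paper's: push the defining identities through $f$ and invoke uniqueness of solutions of \eqref{eq:quasigroup} in $N$ for (a), apply $f$ to the solutions in $M$ for (b), and chain (a) with two-sidedness in $M$ for (c). Your reading of $f(N)$ as $f(M)$ in (c) is the intended one (it is a typo in the statement), and your extra care in (b)--(c) about the unitary-submagma check and about inverses in the subloop versus the ambient loop only makes explicit what the paper leaves implicit.
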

\begin{proof}
	(a) Let $x\in M$. Then
	$$
	f(x)^\lambda f(x)=e=f(e)=f(x^\lambda x)=f(x^\lambda)f(x)\quad\text{and}\quad f(x)f(x)^\rho=e=f(e)=f(xx^\rho)=f(x)f(x^\rho),
	$$
	and so the conclusion follows by uniqueness of the solutions of \eqref{eq:quasigroup}.
	
	(b) Let $f(a)$ and $f(b)$ be two elements in $f(M)$. Then there exists a unique $x\in M$ such that $ax=b$ and so $f(a)=f(x)f(b)$. Moreover, the solution $f(x)$ is unique since $N$ is a loop.
	
	(c) For every $f(x)\in N$, $f(x)^\lambda=f(x^\lambda)=f(x^\rho)=f(x)^\rho$.
\end{proof}

\begin{proposition}\label{prop:homo_eff_proper}
	Let $f\colon M\to N$ be a homomorphism between two loops with inverse properties, and $\II_M$ and $\II_N$ be two magmatic ideals on $M$ and $N$, respectively. Assume that $M$ has two-side inverses and $f$ is a homomorphism. Then the following properties are equivalent:
	\begin{compactenum}[(a)]
		\item $f^{-1}(\II_N)\subseteq\II_M$;
		\item $f\colon(M,\E_{\II_M}^{\lambda})\to(N,\E_{\II_N}^{\lambda})$ is effectively proper.
	\end{compactenum}
\end{proposition}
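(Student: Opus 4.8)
The plan is to reduce the whole equivalence to the single identity
\begin{equation*}
(f\times f)^{-1}(E_K^\lambda)=E_{f^{-1}(K)}^\lambda\qquad\text{for every }K\in\II_N,
\end{equation*}
after which both implications become routine manipulations with bases and with closure under subsets. Recall that effective properness, like all the properties in \S\ref{sub:mor}, may be checked on the elements of a base, so it is enough to understand the preimages $(f\times f)^{-1}(E_K^\lambda)$ of the basic entourages of $\E_{\II_N}^\lambda$.

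To establish the identity I would simply unwind membership. A pair $(x,y)$ lies in $(f\times f)^{-1}(E_K^\lambda)$ exactly when $f(y)=f(x)k$ for some $k\in K$. Here I would use that an inverse-property loop automatically has two-sided inverses (setting $h=g^\rho$ in the left inverse property gives $g^\lambda(gg^\rho)=g^\rho$, i.e. $g^\lambda=g^\rho$), so $M$, $N$, and hence the subloop $f(M)$ carry a genuine inversion $g\mapsto g^{-1}$ for which both cancellations $g^{-1}(gh)=h$ and $g(g^{-1}h)=h$ hold (the latter because $(g^\rho)^\lambda=g$). Thus $f(y)=f(x)k$ is equivalent to $k=f(x)^{-1}f(y)$, and by Fact \ref{fact:homo_loop}(a), the homomorphism property, and the two-sided inverses of $M$,
\begin{equation*}
f(x)^{-1}f(y)=f(x^{-1})f(y)=f(x^{-1}y).
\end{equation*}
Hence $(x,y)\in(f\times f)^{-1}(E_K^\lambda)$ iff $f(x^{-1}y)\in K$ iff $x^{-1}y\in f^{-1}(K)$, which is exactly the description of $E_{f^{-1}(K)}^\lambda$ obtained the same way inside $M$.

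With the identity in hand, the implication (a)$\to$(b) is immediate: given $E\in\E_{\II_N}^\lambda$ pick $K\in\II_N$ with $E\subseteq E_K^\lambda$; then by monotonicity $(f\times f)^{-1}(E)\subseteq E_{f^{-1}(K)}^\lambda$, and since $f^{-1}(K)\in\II_M$ by hypothesis, $E_{f^{-1}(K)}^\lambda\in\E_{\II_M}^\lambda$, so $(f\times f)^{-1}(E)\in\E_{\II_M}^\lambda$ because $\E_{\II_M}^\lambda$ is closed under subsets. For (b)$\to$(a) I would feed the base element $E_K^\lambda$ into effective properness: it yields $E_{f^{-1}(K)}^\lambda=(f\times f)^{-1}(E_K^\lambda)\in\E_{\II_M}^\lambda$, hence $E_{f^{-1}(K)}^\lambda\subseteq E_I^\lambda$ for some $I\in\II_M$; evaluating both sides at $e$ and using $E_I^\lambda[e]=I$ and $E_{f^{-1}(K)}^\lambda[e]=f^{-1}(K)$ gives $f^{-1}(K)\subseteq I$, so $f^{-1}(K)\in\II_M$.

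The main obstacle is the identity itself, and in particular the algebra of the loop inverses: the clean rewriting $f(x)^{-1}f(y)=f(x^{-1}y)$ is precisely where the two-sided inverse hypothesis on $M$ and Fact \ref{fact:homo_loop}(a) are needed, and one must check that both cancellations used in $N$ are available, which is guaranteed only because the inverse property forces the inversion to be two-sided. Everything after that is bookkeeping with $\widehat{(\cdot)}$ and with the operator $E\mapsto E[e]$.
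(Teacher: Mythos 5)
Your proof is correct and takes essentially the same route as the paper's: both implications come down to comparing $(f\times f)^{-1}(E_K^\lambda)$ with $E_{f^{-1}(K)}^\lambda$, with the key algebraic step being the rewriting of $f(y)=f(x)k$ as $f(x^{-1}y)=k$ via Fact \ref{fact:homo_loop}(a), the inverse property, and two-sided inverses, and with the converse implication handled by evaluating sections at $e$. Your two refinements --- establishing the full equality $(f\times f)^{-1}(E_K^\lambda)=E_{f^{-1}(K)}^\lambda$ rather than just the inclusion the paper uses, and noting that two-sided inverses in $M$ already follow from the left inverse property --- are correct but do not change the substance of the argument.
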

\begin{proof}
	Implication (b)$\to$(a) is trivial. In fact, for every $K\in\II_N$, $f^{-1}(K)=f^{-1}(E_K^\lambda[e])\subseteq((f\times f)^{-1}(E_K^\lambda))[e]$ and $(f\times f)^{-1}(E_K^\lambda)\in\II_M$. 
	Conversely, let $e\in K\in\II_N$ and $(x,y)\in(f\times f)^{-1}(E_K^\lambda)$. Then $f(y)\in f(x)K$, which implies that $f(x^{-1}y)\in K$. Hence, $x^{-1}y\in f^{-1}(K)$ and thus $(x,y)\in E_{f^{-1}(K)}^\lambda$.
\end{proof}

\section{Categories of entourage spaces}\label{sec:category}

A {\em concrete category} $(\XX,\Ufor)$ is a pair where $\XX$ is a category and $\Ufor\colon\XX\to\Set$ is a {\em faithful functor} (i.e., such that, for every $f,g\in\Mor_{\XX}(X,Y)$, $\Ufor f=\Ufor g$ if and only if $f=g$). In that situation, the {\em fiber} of a set $A$ is the family of all $X$ in $\XX$ such that $\Ufor X=A$. If $(\mathcal X,\Ufor)$ is a concrete category and $X$ and $Y$ are two objects of $\mathcal X$, a morphism $f\colon \Ufor X\to \Ufor Y$ is a {\em $\mathcal X$-morphism with respect to $X$ and $Y$} whenever there exists $\overline f\in\Mor_\XX(X,Y)$ such that $\Ufor\overline f=f$. 

A {\em source} in a category $\XX$ is a family (possibly a proper class) $\{f_i\colon X\to X_i\}_{i\in I}$ of morphisms of $\XX$. Its dual notion is the one of sink. A {\em sink} in a category $\XX$ is a family $\{f_i\colon X_i\to X\}_{i\in I}$ of morphisms of $\XX$.

Suppose now that $(\XX,\Ufor)$ is a concrete category. A source $\{f_i\colon X\to X_i\}_{i\in I}$ of $\XX$ is {\em initial} if, for every morphism $f\colon \Ufor A\to \Ufor X$ of $\Set$, such that $\Ufor f_i\circ \Ufor f\colon\Ufor A\to \Ufor X_i$ is an $\XX$-morphism, then $f$ is an $\XX$-morphism. An {\em initial lifting} of a source $\{f_i\colon A\to \Ufor X_i\}$ in $\Set$, where, for every $i\in I$, $X_i$ is an object of $\XX$, is an initial source $\{g_i\colon B\to X_i\}_{i\in I}$ of $\XX$ such that $\Ufor B=A$ and $\Ufor g_i=f_i$, for every $i\in I$.

\begin{definizione}[\cite{Bru}]\label{def:cat_top}
	A concrete category $(\XX,\Ufor)$ is {\em topological} if:\begin{compactenum}[(a)]
		\item $\Ufor$ is {\em amnestic} (i.e. $f=1_X$, whenever $f\colon X\to X$ is an isomorphism of $\XX$ such that $\Ufor f=1_{\Ufor X}$);
		\item $\Ufor$ is \emph{transportable} (i.e., for every object $A$ of $\XX$ and every isomorphism $h\colon \Ufor A\to X$ of $\Set$, there exists an object $B$ of $\XX$ and an isomorphism $f\colon A\to B$ of $\XX$ such that $\Ufor f=h$);
		\item constant maps are morphisms of $\XX$;
		\item $\Ufor$ has {\em small fibers} (i.e., the fibers are sets);
		\item every singleton of $\Set$ has a unique element in its fiber;
		\item every source $\{f_i\colon A\to UX_i\}_{i\in I}$ of $\Set$, has an initial lifting.
	\end{compactenum}
\end{definizione}

Let us define the concrete category $\EN$, whose objects are entourage spaces and whose morphisms are bornologous maps between them. Moreover, we consider three full subcategories of $\EN$, namely, $\SPCS$, whose objects are semi-coarse spaces, $\QPCS$, the subcategory of quasi-coarse spaces, and, finally, $\PCS$, the subcategory of coarse spaces. All of those categories $\XX$ are concrete and so they have {\em forgetful functors} (i.e., faithful functors) $\Ufor_\XX\colon\XX\to\Set$. Moreover, there are the following forgetful functors:

\begin{equation}\label{forgetful}
\xymatrix{
&\mbox{$\EN$}\\
\mbox{$\SPCS$}\ar[ur]^{\Ufor_{\SPCS,\EN}}& &\mbox{$\QPCS$}\ar[ul]_{\Ufor_{\QPCS,\EN}}\\
&\mbox{$\PCS$}\ar[ul]^{\Ufor_{\PCS,\SPCS}}\ar[ur]_{\Ufor_{\PCS,\QPCS}}\ar[uu]|-{\Ufor_{\PCS,\EN}}
}
\end{equation}
We write $\Ufor$ instead of both $\Ufor_{\XX,\mathcal Y}\colon \XX\to\mathcal Y$ and $\Ufor_{\XX}\colon\XX\to\Set$ if there is no risk of ambiguity.

\begin{teorema}\label{theo:top_cat}
The categories $\EN$, $\SPCS$, $\QPCS$ and $\PCS$ are topological.
\end{teorema}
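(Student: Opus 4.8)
The plan is to verify the six conditions (a)--(f) of Definition \ref{def:cat_top} for all four categories at once, since the arguments coincide except at a single closure step. Conditions (a)--(e) are routine. Amnesticity (a) is immediate from faithfulness of $\Ufor$: if $f\colon X\to X$ is an isomorphism with $\Ufor f=1_{\Ufor X}=\Ufor 1_X$, then $f=1_X$. For transportability (b), given an object $A=(X,\E_A)$ and a bijection $h\colon X\to Y$, I would push the structure forward to $\{(h\times h)(E)\mid E\in\E_A\}$, which is an entourage structure of the same type and turns $h$ into an asymorphism. Constant maps (c) are bornologous because the image of any entourage under $f\times f$ is contained in $\{(y_0,y_0)\}\subseteq\Delta_Y\in\E_Y$ and $\E_Y$ is downward closed. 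Fibers are small (d) since every entourage structure on $A$ is an element of $\mathcal P(\mathcal P(A\times A))$, and (e) holds because on a singleton the only entourage structure is $\{\emptyset,\Delta\}$, which is coarse, so the one-point fiber is a singleton in each of the four categories.

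The substantial content is the existence of initial liftings (f). Given a source $\{f_i\colon A\to\Ufor X_i\}_{i\in I}$ in $\Set$ with $X_i=(\Ufor X_i,\E_i)$, I would take as candidate
$$
\E=\{E\subseteq A\times A\mid (f_i\times f_i)(E)\in\E_i\text{ for every }i\in I\}.
$$
First I would check that $\E$ is an entourage structure: it contains $\Delta_A$ because $(f_i\times f_i)(\Delta_A)\subseteq\Delta_{\Ufor X_i}\in\E_i$ and each $\E_i$ is downward closed, and it is closed under subsets and finite unions because $f_i\times f_i$ preserves inclusions and unions while each $\E_i$ is an ideal. Each $f_i$ is then bornologous by construction. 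Initiality follows from the identity $(f_i\times f_i)\circ(g\times g)=((f_i\circ g)\times(f_i\circ g))$: if $g\colon(Z,\E_Z)\to(A,\E)$ makes every $f_i\circ g$ bornologous, then for $E_Z\in\E_Z$ we get $(f_i\times f_i)((g\times g)(E_Z))\in\E_i$ for all $i$, i.e.\ $(g\times g)(E_Z)\in\E$, so $g$ is bornologous.

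It then remains to show that $\E$ belongs to the correct subcategory whenever all the $X_i$ do; this is the only place where the four cases diverge. For $\SPCS$, if every $\E_i$ is semi-coarse and $E\in\E$, then $(f_i\times f_i)(E^{-1})=((f_i\times f_i)(E))^{-1}\in\E_i$, hence $E^{-1}\in\E$. For $\QPCS$ the key point is that equality of images under composition fails, but the inclusion
$$
(f_i\times f_i)(E\circ F)\subseteq(f_i\times f_i)(E)\circ(f_i\times f_i)(F)
$$
holds (send an intermediate point $y$ to $f_i(y)$); since the right-hand side lies in the quasi-coarse $\E_i$ and $\E_i$ is downward closed, $(f_i\times f_i)(E\circ F)\in\E_i$, so $E\circ F\in\E$. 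The coarse case $\PCS$ combines both remarks. Because $\SPCS$, $\QPCS$, $\PCS$ are full subcategories of $\EN$, once $\E$ is shown to lie in the subcategory the initiality verified above restricts to it automatically. I expect the main (albeit mild) obstacle to be exactly this subcategory-preservation step for $\QPCS$: one must notice that the $f_i\times f_i$-image of a composition is only \emph{contained in}, not equal to, the composition of the images, and that downward closure of $\E_i$ is precisely what makes the argument go through.
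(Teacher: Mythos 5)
Your proof is correct and takes essentially the same approach as the paper: your structure $\E=\{E\subseteq A\times A\mid(f_i\times f_i)(E)\in\E_i\ \forall i\}$ is exactly the paper's $\bigcap_{i}(f_i)_\ast\E_i$, the intersection of the structures generated by the preimage bases $\{(f_i\times f_i)^{-1}(E)\mid E\in\E_i\}$, because $(f_i\times f_i)(F)\in\E_i$ if and only if $F\subseteq(f_i\times f_i)^{-1}(E)$ for some $E\in\E_i$ (using that $\E_i$ is downward closed). The paper dismisses conditions (a)--(e) and the closure of the lift under the semi-/quasi-coarse axioms as easy checks, which you carry out correctly, including the one point of genuine care (forward images only give an inclusion into the composition of images, which downward closure then handles).
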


The only thing we need to verify is that those categories allow for lifting of initial sources since the other requests can be easily checked. Let $f\colon X\to(Y,\E)$ be a map between a set and an entourage space. We define the {\em initial entourage structure} $f_\ast\E$ as the entourage structure over $X$ generated by the base $\{(f\times f)^{-1}(E)\mid E\in\E\}$. If $\E$ is a semi-coarse structure (quasi-coarse structure), then $f_\ast\E$ is a semi-coarse structure (quasi-coarse structure, respectively). Moreover, $f\colon(X,f_\ast\E)\to(Y,\E)$ is bornologous and effectively proper.

\begin{proof}[Proof of Theorem \ref{theo:top_cat}]
Let $\{f_i\colon X\to(Y_i,\E_i)\}_{i\in I}$ be a source of maps from a set to a family of entourage spaces. Define the entourage structure $\E$ over $X$ as $\E=\bigcap_{i\in I}(f_i)_\ast\E_i$. If $\E_i$ is a semi-coarse structure (a quasi-coarse structure), for every $i\in I$, then $\E$ is a semi-coarse structure (a quasi-coarse structure, respectively).
\end{proof}

A morphism $\alpha\colon X\to X^{\prime}$, in a category $\mathcal X$, is called:
\begin{compactenum}[$\bullet$]
	\item an {\em epimorphism} if every pair of morphisms $\beta,\gamma\colon X^{\prime}\to X^{\prime\prime}$ such that $\beta\circ\alpha=\gamma\circ\alpha$ satisfies $\beta=\gamma$;
	\item a {\em monomorphism} if every pair of morphisms $\beta,\gamma\colon X^{\prime\prime}\to X$ such that $\alpha\circ\beta=\alpha\circ\gamma$ satisfies $\beta=\gamma$;
	\item a {\em bimorphism} if it is both epimorphism and monomorphism.
\end{compactenum} 
Thanks to Theorem \ref{theo:top_cat}, the fact that in those categories the epimorphisms are surjective morphisms and the monomorphisms are injective morphisms follows (\cite{DikTho}). In particular those four categories are not {\em balanced}. Recall that a category $\XX$ is {\em balanced} if every bimorphism is an isomorphism. The fact that $\PCS$ is topological and it is not balanced was already proved in \cite{DikZa}.

A category $\XX$ is {\em cowellpowered} if for every object $X$ and every source $\{e_i\colon X\to X_i\}_{i\in I}$ of epimorphisms (possibly a proper class), there exists a set $\{e_j\}_{j\in J}$, $J\subseteq I$, such that, for every $i\in I$, there exists $j\in J$ and an isomorphism $f$ of $\XX$ such that $e_i=f\circ e_j$. 

Since the epimorphisms of $\EN$, $\SPCS$, $\QPCS$, and $\PCS$ are surjective morphisms, those categories are cowellpowered. Moreover, in \cite{Zav} it was proved that every epireflective subcategory of $\PCS$ is cowellpowered. Hence the following question naturally arises.

\begin{question}
	Does there exist a subcategory of $\EN$ containing $\PCS$ which is not cowellpowered?
\end{question}

\subsection{Functors between the four categories $\EN$, $\SPCS$, $\QPCS$, and $\PCS$}\label{sub:fun_sym}

We want to study the relationships between $\EN$, $\SPCS$, $\QPCS$, and $\PCS$, and, in order to do that, we define some useful functors between the four categories, which will be summarised in the diagram \eqref{functors}. All these functors will only be defined on the objects, since the morphisms are ``fixed'' (i.e., if $\Ffun\colon\XX\to\mathcal Y$ is one of the functors that we are going to define and $f\colon X\to Y$ is a morphism of $\XX$, then $\Ufor(\Ffun f)=\Ufor f$).
\begin{compactenum}[$\bullet$]
\item $\Sym\colon\EN\to\SPCS$ is defined by the law $\Sym(X,\E)=(X,\Sym(\E))$, where $\Sym(\E)=\widehat{\{E\cap E^{-1}\mid E\in\E\}}=\E\cap\E^{-1}$, for every $(X,\E)\in\EN$. In a similar way, $\Sym\colon\QPCS\to\PCS$ is defined.
\item $\USym\colon\EN\to\SPCS$ is defined by the law $\USym(X,\E)=(X,\USym(\E))$, where $\USym(\E)=\widehat{\{E\cup E^{-1}\mid E\in\E\}}=\widehat{\E\cup\E^{-1}}$, for every $(X,\E)\in\EN$.
\item $\Wfun\colon\EN\to\QPCS$ is defined by the law $\Wfun(X,\E)=(X,\Wfun(\E))$, for every $(X,\E)\in\EN$, where $\Wfun(\E)=\widehat{\{E^n\mid n\in\N,\,E\in\E\}}$ and, for every $E\in\E$,
$$
E^n=\underbrace{E\circ\cdots\circ E}_{\text{$n$ times}}.
$$
Similarly, $\Wfun\colon\SPCS\to\PCS$ is defined.
\end{compactenum}
If we also consider the composite functor $\Wfun\circ \USym\circ \Ufor_{\QPCS,\EN}\colon\QPCS\to\PCS$, the situation can be represented by the following diagram:

\begin{equation}\label{functors}
\xymatrix{
&\mbox{$\EN$}\ar@/^/[dl]^{\Sym} \ar@/_/[dl]_{\USym} \ar[dr]^{\Wfun} \\
\mbox{$\SPCS$}\ar[dr]^{\Wfun} & & \mbox{$\QPCS$} \ar@/^/[dl]^{\quad\Wfun\circ \USym\circ\Ufor} \ar@/_/[dl]_{\Sym}\\
&\mbox{$\PCS$.} 
} 
\end{equation}
There is another endofunctor $\Jfun$ of $\EN$ that it is worth mentioning. Every entourage space $(X,\E)$ is associated to $\Jfun(X,\E)=(X,\E^{-1})$ and every morphism $f\in\Mor_{\EN}(X,Y)$ is fixed, i.e., $\Ufor f=\Ufor(\Jfun f)$. Since, for every entourage $E$ of $X$, $(f\times f)(E^{-1})=((f\times f)(E))^{-1}$, $\Jfun f$ is bornologous whenever $f$ is bornologous, and so $\Jfun$ is a functor. Note that $\Jfun|_{\SPCS}$ is the identity functor of $\SPCS$.
\begin{remark}
\begin{compactenum}[(a)]
	\item Note that the functor $\Sym$ generalises the definition of the semi-coarse hyperstructure from the entourage hyperstructure (see \S\ref{sub:hyper} for the definitions). More precisely, if $(X,\E)$ is an entourage space, then $\Sym(\mathcal P(X),\mathcal H(\E))=(\mathcal P(X),\exp\E)$. 
	\item It is not true in general that, if $X$ is a quasi-coarse space, $\exp X=\exp(\Sym X)$. In fact, let $X=\Z$ with the relation entourage structure induced by the usual order relation, which is a quasi-coarse structure. Then $\Sym X$ is $X$ endowed with the discrete coarse structure. However, $2\Z$ and $2\Z+1$ belong to the same connected component of $\exp X$.
\end{compactenum}
\end{remark}

In the following example we provide a semi-coarse structure that admits no maximal coarse structure that is finer than the original one.

\begin{esempio}
	Consider the following extended semi-metric $d$ on $\Z^2$: for every $(x,y),(z,w)\in\Z^2$,
	$$
	d((x,y),(z,w))=\begin{cases}
	\begin{aligned}&\lvert x-z\rvert&\text{if $y=w$,}\\
	&\lvert y-w\rvert&\text{if $x=z$,}\\
	&\infty&\text{otherwise.}\end{aligned}
	\end{cases}
	$$
	Then $(\Z^2,\E_d)$ is a semi-coarse space. We claim that there are two different maximal coarse-structures $\E_1$ and $\E_2$ on $\Z^2$ which are finer then $\E_d$. 
	
	Define the following extended metrics $d_1$ and $d_2$ as follows: for every $(x,y),(z,w)\in\Z^2$,
	$$
	d_1((x,y),(z,w))=\begin{cases}
	\begin{aligned}&\lvert x-z\rvert&\text{if $y=w$,}\\
	&\infty&\text{otherwise.}\end{aligned}
	\end{cases}\quad
	d_2((x,y),(z,w))=\begin{cases}
	\begin{aligned}
	&\lvert y-w\rvert&\text{if $x=z$,}\\
	&\infty&\text{otherwise.}\end{aligned}
	\end{cases}
	$$
	Then $\E_1=\E_{d_1}$ and $\E_2=\E_{d_2}$ satisfy the desired properties.
\end{esempio}

Let us now recall some other basic categorical definitions from \cite{Cats}. Let $\Ffun,\Gfun\colon\mathcal X\to\mathcal Y$ be two functors between two categories. A {\em natural transformation} $\eta$ from $F$ to $G$ (in symbols, $F\xrightarrow{\eta}G$) is a function that assign to each object $X$ of $\XX$, a morphism $\eta_X\colon\Ffun X\to\Gfun X$ of $\mathcal Y$ such that, for every other morphism $f\colon X\to X^\prime$ of $\XX$, the following diagram commutes:
$$
\xymatrix{
\Ffun X\ar^{\eta_X}[r]\ar_{\Ffun f}[d] &\Gfun X\ar^{\Gfun f}[d]\\
\Ffun X^\prime\ar^{\eta_{X^\prime}}[r] &\Gfun X^\prime.
}
$$
Let now $\Gfun\colon\XX\to\YY$ and $\Ffun\colon\YY\to\XX$ be two functors. Then $\Ffun$ is {\em co-adjoint for $\Gfun$} (or $\Ffun$ is {\em left adjoint of $\Gfun$}, or $\Ffun$ has a {\em right adjoint $\Gfun$}) and $\Gfun$ is {\em adjoint for $\Ffun$} (or $\Gfun$ is {\em right adjoint of $\Ffun$}, or $\Gfun$ has a {\em left adjoint $\Ffun$}) if there exist two natural transformations $\eta\colon id_\YY\to\Gfun\circ\Ffun$ (called {\em unit}) and $\varepsilon\colon\Ffun\circ\Gfun\to id_\XX$ (called {\em co-unit}) such that the following two {\em triangular identities} hold: for every object $X\in\XX$ and every $Y\in\YY$, the following triangles commutes:
$$
\xymatrix{
\Gfun X\ar^{\eta_{\Gfun X}}[r]\ar_{id_{\Gfun X}}[dr] & \Gfun\Ffun\Gfun X\ar^{\Gfun\varepsilon_X}[d]\\
& \Gfun X,
} 
\quad\quad\mbox{and}\quad\quad
\xymatrix{
\Ffun X\ar^{\Ffun\eta_Y}[r]\ar_{id_{\Ffun Y}}[dr] & \Ffun\Gfun\Ffun Y\ar^{\varepsilon_{\Ffun Y}}[d]  \\
&  \Ffun Y.
}
$$

Let $\mathcal Y$ be a full subcategory of a category $\XX$. Then $\YY$ is {\em reflective} in $\XX$ if there exists a functor $\Gfun\colon\XX\to\YY$, called {\em reflector}, which is a co-adjoint for the inclusion functor $\Ifun\colon\YY\to\XX$. Dually, $\YY$ is {\em coreflective} in $\XX$ if there exists a functor $\Gfun\colon\XX\to\YY$, called {\em co-reflector}, which is an adjoint for $\Ifun\colon\YY\to\XX$.


The functors previously defined can be used to prove the following theorem.

\begin{teorema}\label{theo:refl_corefl}
\begin{compactenum}[(a)]
\item $\QPCS$ is a reflective subcategory in $\EN$;
\item $\SPCS$ is a reflective and co-reflective subcategory in $\EN$;
\item $\PCS$ is a reflective subcategory in $\SPCS$;
\item $\PCS$ is a reflective and co-reflective subcategory in $\QPCS$.
\end{compactenum}
\end{teorema}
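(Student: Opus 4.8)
The plan is to observe that all four statements are instances of one mechanism. In each of the concrete categories $\EN$, $\SPCS$, $\QPCS$, $\PCS$ the morphisms are fixed (a bornologous map is determined by its underlying set map), and for the identity on an underlying set $X$ one has that $id_X\colon(X,\E)\to(X,\E')$ is bornologous exactly when $\E\subseteq\E'$. Hence I expect every reflector and co-reflector to be one of the functors $\Wfun$, $\USym$, $\Sym$ already introduced (together with the composite $\Wfun\circ\USym$ from \eqref{functors}), and every unit and co-unit to be the identity on the underlying set. For each claim I would exhibit the relevant functor, note that the comparison map is bornologous because of an inclusion of structures, and then verify the universal property; functoriality and the triangular identities are then automatic, since all comparison maps are identities on underlying sets.

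The whole argument rests on two elementary computations for a map $f\colon X\to Y$ and entourages $E,F\subseteq X\times X$: first $(f\times f)(E^{-1})=((f\times f)(E))^{-1}$ (already used to show $\Jfun$ is a functor), and second $(f\times f)(E\circ F)\subseteq(f\times f)(E)\circ(f\times f)(F)$, whence $(f\times f)(E^n)\subseteq((f\times f)(E))^n$ for every $n\in\N$. I would also record the well-definedness facts that place the functors in the correct target categories: $\USym(\E)$ is symmetric; $\Sym(\E)=\E\cap\E^{-1}$ is symmetric and, when $\E$ is quasi-coarse, also quasi-coarse (because $\E^{-1}$ is then quasi-coarse, as noted after Definition \ref{def:main}); and $\Wfun(\E)$ is quasi-coarse and, when $\E$ is semi-coarse, also symmetric (since $(E^n)^{-1}=(E^{-1})^n$). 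These guarantee in particular that $\Sym\colon\QPCS\to\PCS$ and $\Wfun\colon\SPCS\to\PCS$ land where claimed.

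I would then carry out the four adjunctions. For (a), the unit $\eta_{(X,\E)}=id_X\colon(X,\E)\to(X,\Wfun\E)$ is bornologous since $\E\subseteq\Wfun\E$; given any bornologous $f\colon(X,\E)\to(Y,\F)$ with $\F$ quasi-coarse, the composition estimate gives $(f\times f)(E^n)\subseteq((f\times f)(E))^n\in\F$, so $f\colon(X,\Wfun\E)\to(Y,\F)$ is bornologous and factors $f$ uniquely through $\eta$; this is precisely the universal property making $\Wfun$ a reflector. For (b), $\USym$ is the reflector and $\Sym$ the co-reflector of $\SPCS$ in $\EN$: using $(f\times f)(E\cup E^{-1})=(f\times f)(E)\cup((f\times f)(E))^{-1}$ one checks the reflection, and using that the domain is symmetric one checks, for the co-reflection, that $(g\times g)(E')\in\E\cap\E^{-1}=\Sym\E$ for every $E'$ in the (symmetric) source structure. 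Claim (c) repeats the argument of (a) verbatim, now with $\Wfun\colon\SPCS\to\PCS$, since the composition estimate only uses that the codomain is quasi-coarse. Finally (d) combines the previous pieces: $\Sym\colon\QPCS\to\PCS$ is the co-reflector (same computation as in (b), legitimate because $\Sym\E$ is now coarse), while the reflector is the composite $\Wfun\circ\USym$, whose universal property follows by chaining the reflections of (b) and (c).

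The only genuinely delicate point — the main obstacle — is the reflection in (d), where no single basic functor maps $\QPCS$ into $\PCS$: symmetrising via $\USym$ can destroy closure under composition, so one must first apply $\USym$ and then re-close under composition with $\Wfun$. Here I would check both that $\Wfun(\USym\E)$ is genuinely coarse and that the universal property survives the two-step construction, i.e.\ that a bornologous map into a coarse target $(Y,\F)$ is bornologous for $\USym\E$ (since $\F$ is semi-coarse, by the reflection of (b)) and hence for $\Wfun(\USym\E)$ (since $\F$ is quasi-coarse, by the reflection of (c)), while the unit is the identity because $\E\subseteq\USym\E\subseteq\Wfun(\USym\E)$. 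Everything else reduces to routine verifications of the triangular identities, which hold trivially as all units and co-units are identities on underlying sets.
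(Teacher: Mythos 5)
Your proposal is correct and follows essentially the same route as the paper: the paper's proof consists precisely of asserting that $\Wfun$ is the reflector for (a) and (c), $\USym$ the reflector and $\Sym$ the co-reflector for (b), and $\Wfun\circ\USym\circ\Ufor_{\QPCS,\EN}$ the reflector and $\Sym$ the co-reflector for (d), leaving the verifications as ``easy checks of the definitions.'' You identify exactly the same functors and simply supply the details (the computations $(f\times f)(E^{-1})=((f\times f)(E))^{-1}$ and $(f\times f)(E\circ F)\subseteq(f\times f)(E)\circ(f\times f)(F)$, well-definedness, and the universal properties) that the paper omits.
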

\begin{proof}
The thesis follows by proving the following assertions:
\begin{compactenum}[$\bullet$]
\item $\Wfun$ is a reflector of $\Ifun\colon\QPCS\to\EN$; 
\item $\USym$ is a reflector and $\Sym$ is a co-reflector of $\Ifun\colon\SPCS\to\EN$;
\item $\Wfun$ is a reflector of $\Ifun\colon\PCS\to\SPCS$;
\item $\Wfun\circ \USym\circ \Ufor_{\QPCS,\EN}$ is a reflector and $\Sym$ is a co-reflector of $\Ifun\colon\PCS\to\QPCS$;
\end{compactenum}
which are easy checks of the definitions.
\end{proof}
Embeddings of reflective subcategories preserve limits (e.g., products), while embeddings of co-reflective subcategories preserve colimits (e.g., coproducts and quotients). See \cite{Cats} for details. In \S\ref{sub:cat_constr}, we prove that neither $\QPCS$ is a co-reflective subcategory in $\EN$, nor $\PCS$ is a co-reflective subcategory in $\SPCS$ since they do not preserve some colimits.

\subsection{Product, coproducts and quotients}\label{sub:cat_constr}

Let $\XX$ be a category and $\{X_i\}_{i\in I}$ be a family of objects of $\XX$. A source $\{p_i\colon X\to X_i\}_{i\in I}$, where $X$ is an object of $\XX$, is the {\em product of $\{X_i\}_{i\in I}$ in $\XX$} if it satisfies the following universal property: for every other source $\{f_i\colon Y\to X_i\}_{i\in I}$, where $Y$ is another object of $\XX$, there exists a unique morphism $f\colon Y\to X$ such that $f_i=p_i\circ f$, for every $i\in I$.

Let $\{(X_i,\E_i)\}_{i\in I}$ be a family of entourage spaces. Let $X=\Pi_iX_i$ and $p_i\colon X\to X_i$, for every $i\in I$ be the projection maps. Then the {\em product entourage structure} $\E=\Pi_i\E_i$ is defined as
$$
\E=\widehat{\bigg\{\bigcap_{i\in I}(p_i\times p_i)^{-1}(E_i)\mid E_i\in\E_i,\,\forall i\in I\bigg\}}.
$$
We can check that $(X,\E)$ is the product in $\EN$. As we have already pointed out, since $\PCS$ is reflective in both $\SPCS$ and $\QPCS$, which are reflective in $\EN$, these categories are stable under taking products. Hence, the same construction leads to the product in $\SPCS$, $\QPCS$ and $\PCS$. The products in $\PCS$ are well-known objects (see, for instance, \cite{ProZar,DikZa}).

Let $\XX$ be a category and $\{X_k\}_{k\in I}$ be a family of objects of $\XX$. A sink $\{i_k\colon X_k\to X\}_{k\in I}$, where $X$ is an object of $\XX$, is the {\em coproduct of $\{X_k\}_{k\in I}$ in $\XX$} if it satisfies the following universal property: for every other sink $\{f_k\colon X_k\to Y\}_{k\in I}$, where $Y$ is another object of $\XX$, there exists a unique morphism $f\colon X\to Y$ such that $f_k=f\circ i_k$, for every $k\in I$.

Let $\{(X_k,\E_k)\}_{k\in I}$ be a family of entourage spaces. On the disjoint union $X=\bigsqcup_kX_k$ of the supports, we define the {\em coproduct entourage structure} $\E=\bigoplus_k\E_k$ as follows:
$$
\begin{gathered}
\E=\widehat{\{E_{J,\varphi}\mid J\in[I]^{<\infty},\,\varphi\colon J\to\bigcup_{k\in I}\E_k,\,\varphi(k)\in\E_k,\,\forall k\in I\}}\\
\mbox{and, for every such a $J$ and $\varphi$,}\quad E_{J,\varphi}=\Delta_X\cup\bigg(\bigcup_{j\in J}(i_j\times i_j)(\varphi(j))\bigg).
\end{gathered}
$$
It is not hard to check that $(X,\E)$ is actually the coproduct in $\EN$, $\SPCS$, $\QPCS$ and $\PCS$. However, note that we could not have concluded as in the product case that, once we proved that it is the coproduct of $\EN$, then it would automatically be the coproduct of the other categories. In fact, $\QPCS$ is not coreflective in $\EN$ and $\PCS$ is not coreflective in $\SPCS$. 

Let $(\XX,\Ufor)$ be a concrete category. Let $X$ be an object of $\XX$, $A$ be a set, and $f\colon\Ufor X\to A$ be an epimorphism in $\Set$ (i.e., a surjective map). Then the {\em quotient} of $f$ and $X$ is a morphism $\overline f\colon X\to Y$ of $\XX$ with $\Ufor \overline f=f$, and that satisfies the following universal property: for every other morphism $g\colon\Ufor Y\to\Ufor Z$ of $\Set$, $g$ is an $\XX$-morphism, provided that $g\circ f$ is an $\XX$-morphism. 

We want to construct quotients in the four categories we are considering. Let $q\colon(X,\E)\to Y$ be a surjective map from an entourage space to a set. Then the {\em quotient entourage structure} on $Y$ is $q(\E)=\{(q\times q)(E)\mid E\in\E\}$. Moreover, if $\E$ is a semi-coarse structure, then $(Y,q(\E))$ is a semi-coarse space and thus it is also the quotient structure in $\SPCS$. However, as proved in \cite{DikZa}, if $\E$ is a coarse structure, then $q(\E)$ is not a quasi-coarse structure in general. Then the quotient structure in $\QPCS$ (in $\PCS$) is $\overline\E^q$, where $\overline\E^q$ is the finest quasi-coarse structure (coarse structure, respectively) which contains $q(\E)$, namely $\Wfun((Y,q(\E)))$. Hence, in particular, $\QPCS$ is not co-reflective in $\EN$ and $\PCS$ is not co-reflective in $\SPCS$.

Moreover, \cite[Theorem 4.12]{DikZa} can be modified in this setting. Recall that a surjective map $q\colon(X,\E)\to Y$ from a quasi-coarse space to a set is {\em weakly soft} (\cite{DikZa}) if, for every $E\in\E$, there exists $F\in\E$ such that $E\circ R_q\circ E\subseteq R_q\circ F\circ R_q$. 
\begin{theorem}
Let $q\colon(X,\E)\to Y$ be a surjective map from a quasi-coarse space to a set. Then $q(\E)$ is a quasi-coarse structure, and thus $\overline{\E}^q=q(\E)$, if and only if $q$ is weakly soft. In particular, $q(\E)$ is the categorical quotient structure of $\QPCS$ (of $\PCS$, provided that $\E$ is a coarse structure) if and only if $q$ is weakly soft.
\end{theorem}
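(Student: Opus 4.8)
The plan is to reduce the entire equivalence to a single set-theoretic identity that translates composition in the quotient back into composition through the fibre relation $R_q$ upstairs. The key identity I would establish is
$$(q\times q)(E)\circ(q\times q)(F)=(q\times q)(E\circ R_q\circ F),$$
valid for all $E,F\subseteq X\times X$. Both inclusions are direct point-chases: for $\supseteq$, a chain $(x_1,x_2)\in E$, $(x_2,x_3)\in R_q$, $(x_3,x_4)\in F$ produces the intermediate point $q(x_2)=q(x_3)$ witnessing membership on the left; for $\subseteq$, any intermediate point $b\in Y$ of the left-hand composition lifts to $x_2,x_3$ with $q(x_2)=b=q(x_3)$, that is $(x_2,x_3)\in R_q$, gluing the two lifted pairs. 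Alongside it I would record the auxiliary identity $(q\times q)(R_q\circ A\circ R_q)=(q\times q)(A)$ for every $A\subseteq X\times X$, where $\supseteq$ uses $\Delta_X\subseteq R_q$ and $\subseteq$ uses that pre- and post-composing with $R_q$ only moves each coordinate inside its own $q$-fibre.

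With these at hand, I recall from the quotient construction that $q(\E)$ is already an entourages structure, hence closed under taking subsets; so $q(\E)$ being quasi-coarse means exactly that $(q\times q)(E)\circ(q\times q)(F)\in q(\E)$ for all $E,F\in\E$, i.e.\ $(q\times q)(E\circ R_q\circ F)\in q(\E)$ by the key identity. Since $E\circ R_q\circ F\subseteq(E\cup F)\circ R_q\circ(E\cup F)$ with $E\cup F\in\E$, closure under subsets reduces everything to the diagonal case $E=F$.

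For the implication from weak softness, weak softness provides $F\in\E$ with $E\circ R_q\circ E\subseteq R_q\circ F\circ R_q$; applying $q\times q$ and the auxiliary identity yields $(q\times q)(E\circ R_q\circ E)\subseteq(q\times q)(F)\in q(\E)$, and subset-closure concludes that $q(\E)$ is quasi-coarse. For the converse, assuming $q(\E)$ quasi-coarse gives $(q\times q)(E\circ R_q\circ E)=(q\times q)(E)\circ(q\times q)(E)\in q(\E)$, so it equals $(q\times q)(F)$ for some $F\in\E$; a final point-chase (each $(x,w)\in E\circ R_q\circ E$ satisfies $(q(x),q(w))=(q(a),q(b))$ for some $(a,b)\in F$, so $(x,a),(b,w)\in R_q$) gives $E\circ R_q\circ E\subseteq R_q\circ F\circ R_q$, which is weak softness. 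The ``in particular'' clause is then immediate: the categorical quotient structure of $\QPCS$ is by definition $\overline\E^q=\Wfun((Y,q(\E)))$, the finest quasi-coarse structure containing $q(\E)$, so $q(\E)=\overline\E^q$ exactly when $q(\E)$ is itself quasi-coarse; the $\PCS$ case is identical, using that $q(\E)$ is automatically symmetric when $\E$ is coarse.

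The main obstacle is really just isolating the key identity: once composition downstairs is recognised as insertion of $R_q$ upstairs, both directions become symmetric point-chases. The only genuine subtlety I anticipate is the reduction to $E=F$ together with the disciplined use of closure under subsets, since one should prove the relevant \emph{inclusion} into an image set already in $q(\E)$ rather than attempt to realise $(q\times q)(E\circ R_q\circ F)$ as $(q\times q)(G)$ on the nose.
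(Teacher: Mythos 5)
Your proof is correct, and it is worth noting that the paper itself contains no argument for this theorem: its ``proof'' is the single remark that the proof of \cite[Theorem 4.12]{DikZa} can be easily adapted, so your proposal supplies exactly what the paper leaves to the reader. Your organising identity $(q\times q)(E)\circ(q\times q)(F)=(q\times q)(E\circ R_q\circ F)$ is valid under the paper's composition convention, and together with the auxiliary identity $(q\times q)(R_q\circ A\circ R_q)=(q\times q)(A)$ it does make both implications symmetric point-chases; I checked the reduction to the case $E=F$ via $E\circ R_q\circ F\subseteq(E\cup F)\circ R_q\circ(E\cup F)$, the derivation of weak softness from closedness under composition (realising $(q\times q)(E\circ R_q\circ E)$ as $(q\times q)(F)$ and lifting), and the converse, and all are sound. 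One small point you lean on without proof is that $q(\E)$ is closed under taking subsets; this is true but deserves the one-line argument: if $B\subseteq(q\times q)(E)$, then $B=(q\times q)(E^\prime)$ where $E^\prime=\{(x,y)\in E\mid(q(x),q(y))\in B\}\in\E$, using that $\E$ is an ideal. The ``in particular'' clause is handled correctly: since the categorical quotient structure is $\overline\E^q=\Wfun((Y,q(\E)))$, the finest quasi-coarse (resp.\ coarse) structure containing $q(\E)$, it coincides with $q(\E)$ precisely when $q(\E)$ is itself quasi-coarse, and in the $\PCS$ case $q(\E)$ inherits symmetry from $\E$ because $(q\times q)(E^{-1})=((q\times q)(E))^{-1}$. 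What your route buys, compared with the paper's deferral, is a self-contained and verifiable argument in the entourage language of this paper (the cited source works at the level of coarse spaces/balleans and would still need the adaptation to the non-symmetric setting spelled out); the cost is negligible, since the whole proof rests on the two displayed identities.
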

\begin{proof}
The proof that shows \cite[Theorem 4.12]{DikZa} can be easily adapted to obtain this result.
\end{proof}


\section{Equivalence relations induced by functors}\label{sec:sym-c.e.}

A very important notion in coarse geometry is the one of {\em coarse equivalence} (\cite{Roe}). Let $f,g\colon X\to(Y,\E)$ be two maps from a set to a coarse space. Then $f$ and $g$ are {\em close}, and we denote this fact by $f\sim g$, if $\{(f(x),g(x))\mid x\in X\}\in\E$. Since $\E$ is a coarse space, then $\sim$ is an equivalence relation. A subset $Y$ of a coarse space $(X,\E)$ is {\em large} if there exists $E\in\E$ such that $E[Y]=\bigcup_{y\in Y}E[y]=X$. A map $f\colon(X,\E_X)\to(Y,\E_Y)$ between coarse spaces is a {\em coarse equivalence} if it is bornologous and there exists another bornologous map $g\colon Y\to X$ such that $g\circ f\sim id_X$ and $f\circ g\sim id_Y$.

Suppose that $\Ffun$ is a functor from a category $\mathcal X$ to $\PCS$. Then a notion of closeness can be inherited by $\mathcal X$ from $\PCS$. Let $f,g\colon X\to Y$ be two morphisms of $\mathcal X$. We say that $f$ is {\em $\Ffun$-close} to $g$ (and we write $f\sim_{\Ffun} g$) if $\Ffun f$ is close to $\Ffun g$ in $\PCS$. These new relations are equivalences. Moreover, a morphism $k\colon W\to Z$ of $\XX$ is a {\em $\Ffun$-coarse inverse} of a morphism $h\colon Z\to W$ if $k\circ h\sim_{\Ffun}id_Z$ and $h\circ k\sim_{\Ffun}id_W$. Thanks to this notion, we can define equivalences between the objects of $\EN$, $\SPCS$ and $\QPCS$. 
\begin{definizione}
Suppose that $\mathcal X$ is a subcategory of $\EN$ and $\Ffun$ is a functor from $\mathcal X$ to $\PCS$. A map $f\colon (X,\E_X)\to(Y,\E_Y)$ between two objects of $\mathcal X$ is a {\em $\Ffun$-coarse equivalence} if $f\colon (X,\E_X)\to(Y,\E_Y)$ is bornologous and it has a bornologous $\Ffun$-coarse inverse $g\colon (Y,\E_Y)\to(X,\E_X)$. In this case, $(X,\E_X)$ and $(Y,\E_Y)$ are called {\em $\Ffun$-coarsely equivalent}.
\end{definizione}
The concept just introduced induces an equivalence relation between objects of $\XX$.

In particular, in this section we focus our attention to the equivalence induced by the functor $\Sym\colon\QPCS\to\PCS$. In Theorem \ref{prop:sym-quasi-coarse-equivalence} we characterise $\Sym$-coarse equivalences. 

A map $f\colon(X,\E_X)\to(Y,\E_Y)$ between quasi-coarse spaces is {\em large-scale surjective} if $f(X)$ is large in $\Sym(Y,\E_Y)$. If $f$ is also large-scale injective, then it is {\em large-scale bijective}. The following proposition characterises large-scale bijective maps between quasi-coarse spaces.
\begin{proposition}
Let $f\colon(X,\E_X)\to(Y,\E_Y)$ be a map between quasi-coarse space. Then $f$ is large-scale bijective if and only if it has a $\Sym$-coarse inverse. In particular, every $\Sym$-coarse inverse is large-scale bijective.
\end{proposition}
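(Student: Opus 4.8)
The plan is to prove the two implications separately, using throughout that $\Sym(\E)=\E\cap\E^{-1}$ and that a self-map $h$ of an entourage space is $\Sym$-close to the identity exactly when $\{(h(x),x)\mid x\}$ belongs to $\Sym(\E)$, i.e.\ to both $\E$ and $\E^{-1}$. I would first record that, since $\E_X$ and $\E_Y$ are quasi-coarse, so are $\E_X^{-1}$ and $\E_Y^{-1}$, and hence $\Sym(\E_X)$ and $\Sym(\E_Y)$ are coarse structures; in particular largeness in $\Sym(Y,\E_Y)$ is meaningful and both $\Sym(\E_X),\Sym(\E_Y)$ are closed under subsets.

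\textbf{Having a $\Sym$-coarse inverse implies large-scale bijective.} Suppose $g$ satisfies $g\circ f\sim_{\Sym}id_X$ and $f\circ g\sim_{\Sym}id_Y$, and set $D=\{(g(f(x)),x)\mid x\in X\}\in\Sym(\E_X)$ and $D'=\{(f(g(y)),y)\mid y\in Y\}\in\Sym(\E_Y)$. For large-scale injectivity I would note that whenever $f(x)=f(x')$ one has $g(f(x))=g(f(x'))$, so $(x,x')=(x,g(f(x)))\circ(g(f(x')),x')\in D^{-1}\circ D$; since $D,D^{-1}\in\E_X$ and $\E_X$ is quasi-coarse, $D^{-1}\circ D\in\E_X$, whence $R_f\subseteq D^{-1}\circ D\in\E_X$. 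For large-scale surjectivity I would observe that each $y$ lies in $D'[f(g(y))]\subseteq D'[f(X)]$, so $D'[f(X)]=Y$ with $D'\in\Sym(\E_Y)$, i.e.\ $f(X)$ is large in $\Sym(Y,\E_Y)$. Since these two conditions are symmetric in $f$ and $g$, the identical argument with the roles reversed shows that any $\Sym$-coarse inverse $g$ is itself large-scale bijective, which yields the final assertion.

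\textbf{Large-scale bijective implies having a $\Sym$-coarse inverse.} Here I would build the inverse explicitly. Large-scale surjectivity provides $E_0\in\Sym(\E_Y)$ with $E_0[f(X)]=Y$, and after replacing $E_0$ by $E_0\cup\Delta_Y\in\Sym(\E_Y)$ I may assume $\Delta_Y\subseteq E_0$. I then define $g\colon Y\to X$ by choosing, for each $y\in f(X)$, a point $g(y)\in f^{-1}(y)$, and, for each $y\notin f(X)$, a point $g(y)=x$ with $(f(x),y)\in E_0$ (possible since $E_0[f(X)]=Y$). By construction $\{(f(g(y)),y)\mid y\}\subseteq E_0$, so $f\circ g\sim_{\Sym}id_Y$. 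Moreover, for every $x\in X$ the point $f(x)$ lies in $f(X)$, so $f(g(f(x)))=f(x)$ and hence $(g(f(x)),x)\in R_f$; thus $\{(g(f(x)),x)\mid x\}\subseteq R_f$.

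\textbf{Main obstacle.} The crux is to see that this last set lies in $\Sym(\E_X)$ and not merely in $\E_X$. For this I would invoke large-scale injectivity, which gives $R_f\in\E_X$, together with the fact that $R_f$ is a symmetric relation, so $R_f=R_f^{-1}\in\E_X^{-1}$ and therefore $R_f\in\Sym(\E_X)$; closure of $\Sym(\E_X)$ under subsets then gives $g\circ f\sim_{\Sym}id_X$. I expect the delicate point to be exactly this coordination: arranging $g$ to be a genuine section of $f$ over the image, so that the $g\circ f$ error is controlled by $R_f$, while simultaneously using the largeness entourage $E_0$ off the image, so that the $f\circ g$ error is controlled by $E_0$, and then checking that both error sets fall into the \emph{symmetric} parts $\Sym(\E_X)$ and $\Sym(\E_Y)$ rather than into the one-sided structures.
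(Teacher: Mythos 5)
Your proof is correct and follows essentially the same route as the paper: the forward direction builds $g$ by selecting points of $f^{-1}(y)$ on the image and $E_0$-close preimages off the image, controlling $g\circ f$ by the symmetric entourage $R_f$ and $f\circ g$ by the largeness entourage, while the converse extracts largeness from the $f\circ g$ closeness set and bounds $R_f$ by a composite of the $g\circ f$ closeness entourages (the paper writes $M\circ M$ with $M$ symmetrized where you write $D^{-1}\circ D$, a purely cosmetic difference). Your explicit checks that the error sets land in $\Sym(\E_X)$ and $\Sym(\E_Y)$, and the symmetry argument for the final assertion, match the paper's (terser) reasoning exactly.
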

\begin{proof}
($\to$) Let $M=M^{-1}\in\Sym(\E_Y)\subseteq\E_Y$ be an entourage such that $M[f(X)]=Y$. For every $y\in Y$, there exists $x_y\in X$ such that $(y,f(x_y))\in M$. If $y\in f(X)$, suppose that $x_y\in f^{-1}(y)$. Define $g\colon Y\to X$ with the following law: $g(y)=x_y$, for every $y\in Y$. Then $(f(g(y)),y)\in M$ for every $y\in Y$, which witnesses that $f\circ g\sim_{\Sym}id_Y$. The fact that $f$ is large-scale injective proves that $g\circ f\sim_{\Sym} id_X$. 

($\gets$) Let now $g\colon Y\to X$ be a $\Sym$-coarse inverse of $f$. Let $M=M^{-1}\in\E_X$ and $N=N^{-1}\in\E_Y$ be two entourages that demonstrate that $g\circ f\sim_{\Sym}id_X$ and $f\circ g\sim_{\Sym}id_Y$, respectively. Note that, for every $y\in Y$, $f(g(y))\in f(X)$ and $(y,f(g(y))),(f(g(y)),y)\in N$. Hence $f$ is large-scale surjective. Moreover, since $R_f\subseteq M\circ M$, $f$ is large-scale injective.

The last assertion is trivial since, if $g$ is a $\Sym$-coarse inverse of $f$, then $f$ is a $\Sym$-coarse inverse of $g$.
\end{proof}

\begin{proposition}\label{prop:ls_inj_sur_open}
Let $f\colon(X,\E_X)\to(Y,\E_Y)$ be a large-scale bijective map between quasi-coarse spaces and let $g$ be a $\Sym$-coarse inverse of $f$. 
Then, the following properties are equivalent:
\begin{compactenum}[(a)]
\item $f$ is bornologous;
\item $g$ is weakly uniformly bounded copreserving;
\item $g$ is uniformly bounded copreserving;
\item $g$ is effectively proper.
\end{compactenum}
Moreover, every other $\Sym$-coarse inverse $h$ of $g$ satisfies $h\sim_{\Sym}f$.
\end{proposition}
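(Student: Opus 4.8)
The plan is to separate the equivalence of (b), (c), (d) from their equivalence with (a), and then to deduce the uniqueness clause from (d).

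First I would record the closeness data supplied by the hypotheses. Since $g$ is a $\Sym$-coarse inverse of $f$, there are symmetric entourages $M=M^{-1}\in\Sym(\E_X)\subseteq\E_X$ and $N=N^{-1}\in\Sym(\E_Y)\subseteq\E_Y$ with $(g(f(x)),x)\in M$ for every $x\in X$ and $(f(g(y)),y)\in N$ for every $y\in Y$ (one may always enlarge a member of a $\Sym$-structure to a symmetric one by taking its union with its inverse). By the preceding proposition every $\Sym$-coarse inverse is large-scale bijective, so $g\colon(Y,\E_Y)\to(X,\E_X)$ is large-scale injective. As the domain structure $\E_Y$ is a quasi-coarse structure, Proposition \ref{prop:unif_bound_cop_eff_proper} applied to $g$ yields at once the equivalences (b)$\iff$(c)$\iff$(d): the three conditions compared there are exactly ``$g$ large-scale injective and (b)'', ``$g$ large-scale injective and (c)'', and ``$g$ effectively proper'' (i.e.\ (d)), and large-scale injectivity of $g$ holds automatically.

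Next I would prove (a)$\Rightarrow$(d) by sandwiching. Let $E\in\E_X$ and take $(y,y')\in(g\times g)^{-1}(E)$, so that $(g(y),g(y'))\in E$. Decomposing $(y,y')=(y,f(g(y)))\circ(f(g(y)),f(g(y')))\circ(f(g(y')),y')$, the two outer factors lie in $N$ (using $N=N^{-1}$), while the middle factor is $(f\times f)(g(y),g(y'))\in(f\times f)(E)$. As $f$ is bornologous, $(f\times f)(E)\in\E_Y$, and since $\E_Y$ is a quasi-coarse structure it is closed under composition; hence $(g\times g)^{-1}(E)\subseteq N\circ(f\times f)(E)\circ N\in\E_Y$, so $(g\times g)^{-1}(E)\in\E_Y$ and $g$ is effectively proper. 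The converse (d)$\Rightarrow$(a) is the mirror image: given $E\in\E_X$ and $(x,x')\in E$, from $(g(f(x)),x)\in M$ and $(x',g(f(x')))\in M$ I get $(g(f(x)),g(f(x')))\in M\circ E\circ M\in\E_X$, i.e.\ $(f(x),f(x'))\in(g\times g)^{-1}(M\circ E\circ M)$; if $g$ is effectively proper this preimage lies in $\E_Y$, whence $(f\times f)(E)\in\E_Y$ and $f$ is bornologous.

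Finally, for the uniqueness clause I would exploit condition (d). Let $h$ be a second $\Sym$-coarse inverse of $g$, and fix a symmetric $M'=(M')^{-1}\in\Sym(\E_X)$ with $(g(h(x)),x)\in M'$ for every $x$ (coming from $g\circ h\sim_{\Sym}id_X$). Then $(g(f(x)),g(h(x)))\in M\circ M'$ for all $x$, so the graph $\{(g(f(x)),g(h(x)))\mid x\in X\}$ is contained in the symmetric entourage $K=(M\circ M')\cup(M'\circ M)\in\Sym(\E_X)$. Applying $(g\times g)^{-1}$ and invoking (d) gives $\{(f(x),h(x))\mid x\in X\}\subseteq(g\times g)^{-1}(K)\in\Sym(\E_Y)$, that is $h\sim_{\Sym}f$. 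The main obstacle I anticipate is exactly at this last point: closeness of $g(f(x))$ and $g(h(x))$ in $X$ transfers to closeness of $f(x)$ and $h(x)$ in $Y$ only because $g$ \emph{reflects} entourages, so the uniqueness genuinely rests on the effectively proper property (d) rather than on the mere large-scale injectivity of $g$; everything else is the routine composition bookkeeping carried out above.
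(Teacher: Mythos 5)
Your proof is correct and follows essentially the same route as the paper's: the equivalences (b)$\leftrightarrow$(c)$\leftrightarrow$(d) via Proposition \ref{prop:unif_bound_cop_eff_proper} applied to the large-scale injective map $g$, the implications (a)$\leftrightarrow$(d) via the same three-fold decompositions giving $(g\times g)^{-1}(E)\subseteq M\circ(f\times f)(E)\circ M$ and $(g(f(x)),g(f(y)))\in N\circ E\circ N$, and the uniqueness clause by pulling a composed entourage back through $(g\times g)^{-1}$ using effective properness. If anything, your symmetrisation $K=(M\circ M')\cup(M'\circ M)$ in the last step is slightly more careful than the paper's own argument, which asserts that $(g\times g)^{-1}(N\circ K)$ equals its inverse even though a composition of two symmetric entourages need not be symmetric; your variant repairs that small imprecision while keeping the identical strategy.
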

\begin{proof}
Since $g$ is large-scale injective, the equivalences (b)$\leftrightarrow$(c)$\leftrightarrow$(d) descend from Proposition \ref{prop:unif_bound_cop_eff_proper}. Suppose now that $f$ is bornologous. Let $E\in\E_X$ and consider $(g\times g)^{-1}(E)$. Denote by $M=M^{-1}$ the entourage of $\E_Y$ such that $(f(g(z)),z)\in M$, for every $z\in Y$. Then, for every $(x,y)\in(g\times g)^{-1}(E)$,
$$
(x,y)=(x,f(g(x)))\circ(f(g(x)),f(g(y)))\circ(f(g(y)),y)\in M\circ(f\times f)(E)\circ M\in\E_Y.
$$
Conversely, suppose that $g$ is effectively proper. Denote by $N=N^{-1}\in\E_X$ the entourage which witnesses that $g\circ f\sim_{\Sym}id_X$. Let $E\in\E_{X}$ and $(x,y)\in E$. Then
$$
(g(f(x)),g(f(y)))=(g(f(x)),x)\circ(x,y)\circ(y,g(f(y)))\in N\circ E\circ N\in\E_X
$$
and thus $(f(x),f(y))\in(g\times g)^{-1}(N\circ E\circ N)\in\E_Y$.

Finally, if $h$ is another $\Sym$-coarse inverse of $g$, then, for every $x\in X$, $(g(f(x)),g(h(x)))=(g(f(x)),x)\circ(x,g(h(x)))\in N\circ K$, where $K=K^{-1}\in\E_X$ is an entourage that shows that $g\circ h\sim_{\Sym}id_X$. Hence $(f(x),h(x))\in(g\times g)^{-1}(N\circ K)$ and so $f\sim_{\Sym}h$ since $(g\times g)^{-1}(N\circ K)=((g\times g)^{-1}(N\circ K))^{-1}\in\E_Y$.
\end{proof}
Note that, with an easy variation of the proof of Proposition \ref{prop:ls_inj_sur_open}, one can prove that every large-scale injective map $f\colon(X,\E_X)\to Y$ from a quasi-coarse space to a set has a {\em partial $\Sym$-coarse inverse}, i.e., a map $g\colon Y^\prime\to(X,\E_X)$, where $Y^\prime\subseteq Y$, such that $g\circ f\sim_{\Sym} id_Y$.

\begin{theorem}\label{prop:sym-quasi-coarse-equivalence}
Let $f\colon (X,\E_X)\to (Y,\E_Y)$ be a map between quasi-coarse spaces. Then the following are equivalent:
\begin{compactenum}[(a)]
\item $f$ is a $\Sym$-coarse equivalence;
\item $f\colon (X,\E)\to (Y,\E^\prime)$ is large-scale bijective, bornologous and weakly uniformly bounded copreserving;
\item $f\colon (X,\E)\to (Y,\E^\prime)$ is large-scale bijective, bornologous and uniformly bounded copreserving;
\item $f\colon (X,\E)\to (Y,\E^\prime)$ is large-scale surjective, bornologous and effectively proper;
\item there exist two subspaces $X^\prime\subseteq X$ and $Y^\prime\subseteq Y$, which are large in $(X,\Sym(\E_X))$ and in $(Y,\Sym(\E_Y))$, respectively, and an asymorphism $f^\prime\colon X^\prime\to Y^\prime$.
\end{compactenum}
\end{theorem}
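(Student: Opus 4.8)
The plan is to derive the chain (a) $\iff$ (b) $\iff$ (c) $\iff$ (d) purely formally from the preparatory propositions, and then to graft on (e) by a ``pick a section of the fibres / extend across a large subspace'' argument.

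First I would settle (b) $\iff$ (c) $\iff$ (d). Since large-scale bijectivity contains large-scale injectivity, Proposition \ref{prop:unif_bound_cop_eff_proper} (whose hypothesis that $\E_X$ is quasi-coarse is in force) says that ``large-scale injective $+$ weakly uniformly bounded copreserving'', ``large-scale injective $+$ uniformly bounded copreserving'', and ``effectively proper'' coincide; recalling that effective properness already forces large-scale injectivity and, by Proposition \ref{prop:eff_cop_wcop}, uniform and hence weak copreservation, the three clauses of (b), (c), (d) become interchangeable once the common data ``bornologous'' and ``large-scale surjective'' are kept (note large-scale bijective $=$ large-scale surjective $+$ large-scale injective). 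Then I would do (a) $\iff$ (b): by the unlabelled proposition preceding Proposition \ref{prop:ls_inj_sur_open}, a $\Sym$-coarse inverse $g$ of $f$ exists exactly when $f$ is large-scale bijective, and any such $g$ is itself large-scale bijective with $f$ as its $\Sym$-coarse inverse. Applying Proposition \ref{prop:ls_inj_sur_open} to the pair $(g,f)$ (with $g$ in the role of the large-scale bijection and $f$ in the role of its inverse) turns ``$g$ is bornologous'' into ``$f$ is weakly uniformly bounded copreserving''. Thus $f$ being a $\Sym$-coarse equivalence, i.e. $f$ bornologous with a bornologous $\Sym$-coarse inverse, is precisely ``$f$ large-scale bijective, bornologous, weakly uniformly bounded copreserving'', which is (b).

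It remains to insert (e), where I read $f'$ as the corestriction $f|_{X'}\colon X'\to Y'=f(X')$ of the (bornologous) map $f$. For (d) $\Rightarrow$ (e), effective properness gives $R_f\in\E_X$, so choosing one representative from each non-empty fibre yields a set $X'$ with $f|_{X'}$ injective; since $R_f$ is symmetric it lies in $\Sym(\E_X)$ and $R_f[X']=X$, so $X'$ is large in $(X,\Sym(\E_X))$, while $Y'=f(X')=f(X)$ is large in $(Y,\Sym(\E_Y))$ by large-scale surjectivity. The restriction $f'$ is a bijection that is bornologous and effectively proper (both properties descend to the entourage substructures on $X'$ and $Y'$), hence an asymorphism. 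For (e) $\Rightarrow$ (d), I would fix symmetric $M\in\Sym(\E_X)$, $N\in\Sym(\E_Y)$ with $M[X']=X$ and $N[Y']=Y$; large-scale surjectivity is immediate ($f(X)\supseteq Y'$ is large), and effective properness follows by a three-step sandwich: given $(f(x_1),f(x_2))\in E$, replace $x_1,x_2$ by nearby $x_1',x_2'\in X'$, push the pair through $E':=(f\times f)(M)\circ E\circ (f\times f)(M)\in\E_Y$, pull it back by the bornologous $(f')^{-1}$ into a fixed $F'\in\E_X$, and reabsorb the modifications via $(f\times f)^{-1}(E)\subseteq M\circ F'\circ M\in\E_X$. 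Crucially, $\E_X,\E_Y$ being quasi-coarse keeps these compositions inside the structures, and $\Sym(\E_X),\Sym(\E_Y)$ being genuine coarse structures (the functor $\Sym$) is what makes the largeness substitutions legitimate.

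I expect the main obstacle to be exactly this last step (e) $\Rightarrow$ (d): unlike the formal chain (a)--(d), it genuinely mediates between the global map $f$ and the local asymorphism $f'$, and the care lies in performing the estimates using only the composition closure of the quasi-coarse structures and the symmetry of $\Sym(\E_X),\Sym(\E_Y)$, while keeping the auxiliary entourages $M$, $N$, $F'$ independent of the chosen points, so that the resulting set is really a member of $\E_X$ rather than merely a pointwise bound.
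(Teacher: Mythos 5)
Your chain (b)$\leftrightarrow$(c)$\leftrightarrow$(d), your derivation of (a)$\leftrightarrow$(b), and your proof of (d)$\to$(e) are all correct; indeed, the route to (a)$\leftrightarrow$(b) --- combining the unlabelled proposition preceding Proposition \ref{prop:ls_inj_sur_open} with Proposition \ref{prop:ls_inj_sur_open} applied to the pair $(g,f)$ --- is a clean alternative to the paper, which instead proves (a)$\to$(d) by a direct computation with the entourage $M=\{(x,g(f(x))),(g(f(x)),x)\mid x\in X\}$. The genuine gap is the last leg (e)$\to$(d). Item (e), as stated and as the paper uses it, is a property of the pair of spaces: it asserts the existence of \emph{some} asymorphism $f'\colon X'\to Y'$ between large subspaces, and says nothing about the behaviour of $f$ itself --- not even that $f'$ is a restriction of $f$, let alone that $f$ is bornologous. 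Your sketch silently assumes both: you read $f'$ as $f|_{X'}$, and in the ``sandwich'' $(f\times f)(M)\circ E\circ(f\times f)(M)$ you use that $(f\times f)(M)\in\E_Y$, i.e.\ that the global $f$ is bornologous. Neither assumption is supplied by (e), and the implication you are attempting is false: take $X=Y=\Z$ with the euclidean metric coarse structure, $X'=Y'=2\Z$ (which is large), and let $f$ be the identity on even integers and $x\mapsto x^3$ on odd ones. Then $f|_{X'}$ is an asymorphism onto the large subspace $Y'$, yet $f$ is neither bornologous nor effectively proper, so (d) fails for this $f$; no argument can close this step for the original map.

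This is exactly why the paper closes the cycle differently: it proves (e)$\to$(a) not for the given map but by \emph{constructing} a $\Sym$-coarse equivalence out of the data in (e). One fixes a symmetric $M=M^{-1}\in\E_X$ with $M[X']=X$, defines a retraction $h\colon X\to X'$ with $h|_{X'}=id_{X'}$ and $(h(x),x)\in M$ otherwise (and $k\colon Y\to Y'$ analogously), checks that $h$ and $k$ are bornologous because $(h\times h)(E)\subseteq M\circ E\circ M$, and then verifies that $f'\circ h$ is bornologous with bornologous $\Sym$-coarse inverse $(f')^{-1}\circ k$. In other words, the equivalence with (e) is an equivalence at the level of ``$X$ and $Y$ are $\Sym$-coarsely equivalent'', not a pointwise statement about $f$. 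A correct write-up must either adopt this reading and construct a new map, as the paper does, or strengthen (e) so that it records that $f$ itself is bornologous and that $f'=f|_{X'}$; under that stronger hypothesis your sandwich argument (with $F'=((f')^{-1}\times(f')^{-1})(E'\cap(Y'\times Y'))$ fixed independently of the points) does go through.
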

\begin{proof}
The equivalences (b)$\leftrightarrow$(c)$\leftrightarrow$(d) follow from Proposition \ref{prop:eff_cop_wcop}.	
	
(a)$\to$(d) Fix and entourage $E\in\E_Y$ and let $(x,y)\in(f\times f)^{-1}(E)$ be an arbitrary element in its preimage. Let $M=\{(x,g(f(x))),(g(f(x)),x)\mid x\in X\}\in\E_X$. Then
$$
(x,y)=(x,g(f(x)))\circ(g(f(x)),g(f(y)))\circ(g(f(y)),y)\in M\circ(g\times g)(E)\circ M\in\E_X.
$$
Hence, $f$ is effectively proper. Moreover, $f$ is large-scale surjective, since $\Sym(\E_Y)\subseteq\E_Y$ and, if $N=\{(y,f(g(y)))\mid y\in Y\}$, then the entourage $N\cup N^\prime\in \Sym(\E_Y)\subseteq\E_Y$ witnesses the property.

(d)$\to$(e) Suppose that $f\colon X\to Y$ satisfies item (d). Let $X^\prime\subseteq X$ be a subset with the following property: for every $x\in X$, $\lvert X^{\prime}\cap f^{-1}(f(x))\rvert=1$. Then $f^\prime=f|_{X^\prime}\colon X^\prime\to Y^\prime$, where $Y^\prime=f(X)=f(X^\prime)$, is bijective. Moreover, $f^\prime\colon(X^\prime,\E_X|_{X^\prime})\to(Y^\prime,\E_Y|_{Y^\prime})$ is bornologous and effectively proper, since it is a restriction of $f$. Finally, since $f\colon X\to Y$ has uniformly bounded fibers, $X^\prime$ is large in $(X,\Sym(\E_X))$.


(e)$\to$(a) Let $M=M^{-1}\in\E_X$ be an entourage such that $M[X^\prime]=X$. Then define a map $h\colon X\to X^\prime$ as follows: if $x\in X^\prime$, then $h(x)=x$, and, if otherwise $x\in X\setminus X^\prime$, then $h(x)$ is a point such that $(h(x),x)\in M$. Similarly we can define a map $k\colon Y\to Y^\prime$. We claim that $h$ and $k$ are bornologous. Let $E\in\E_X$. Then note that $(h\times h)(E)\subseteq M\circ E\circ E\in\E_X$ and thus $h$ is bornologous. The same property can be similarly proved for $k$. Then the maps $f=f^\prime\circ h$ and $g=(f^{\prime})^{-1}\circ k$ are bornologous. We claim that $g$ is a $\Sym$-coarse inverse of $f$. For every $x\in X$, since $k|_{Y^\prime}=id_{Y^\prime}$,
$$
(x,g(f(x)))=(x,(f^\prime)^{-1}(k(f^\prime(h(x)))))=(x,(f^\prime)^{-1}(f^\prime(h(x))))=(x,h(x))\in M,
$$
and thus $g\circ f\sim_{\Sym}id_X$. The other request can be similarly proved.
\end{proof}

\begin{proposizione}
Let $(X,\E_X)$ and $(Y,\E_Y)$ be two $\Sym$-coarsely equivalent quasi-coarse spaces. If $(X,\E_X)$ is a coarse space, then so it is $(Y,\E_Y)$.
\end{proposizione}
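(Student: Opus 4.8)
The plan is to reduce the problem to the asymorphism supplied by the characterisation of $\Sym$-coarse equivalences and then transport the inversion closure from a large subspace back to the whole of $Y$. Since $(Y,\E_Y)$ is already a quasi-coarse space by hypothesis, it suffices to prove that $\E_Y$ is in addition a semi-coarse structure, i.e.\ that $E^{-1}\in\E_Y$ whenever $E\in\E_Y$.

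First I would invoke the equivalence (a)$\Leftrightarrow$(e) of Theorem \ref{prop:sym-quasi-coarse-equivalence}: there are subspaces $X^\prime\subseteq X$ and $Y^\prime\subseteq Y$, large in $(X,\Sym(\E_X))$ and in $(Y,\Sym(\E_Y))$ respectively, together with an asymorphism $f^\prime\colon X^\prime\to Y^\prime$. Because $(X,\E_X)$ is coarse, $\E_X$ is in particular a semi-coarse structure, and hence so is the entourage substructure $\E_X|_{X^\prime}$. Applying the fact, recalled in \S\ref{sub:mor}, that asymorphic spaces share the property of being semi-coarse, I conclude that $\E_Y|_{Y^\prime}$ is a semi-coarse structure as well. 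Only the semi-coarse half of this transfer is needed here, since the quasi-coarse structure of $Y$ is given.

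The decisive step is to promote the inversion closure of $\E_Y|_{Y^\prime}$ to one of $\E_Y$, exploiting the largeness of $Y^\prime$. Fix $M=M^{-1}\in\Sym(\E_Y)\subseteq\E_Y$ with $M[Y^\prime]=Y$, and let $E\in\E_Y$. Since $\E_Y$ is quasi-coarse, $F=M\circ E\circ M\in\E_Y$, whence $F\cap(Y^\prime\times Y^\prime)\in\E_Y|_{Y^\prime}$. By semi-coarseness of $\E_Y|_{Y^\prime}$ there is $G\in\E_Y$ with $F^{-1}\cap(Y^\prime\times Y^\prime)=G\cap(Y^\prime\times Y^\prime)$. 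Now for an arbitrary $(y,z)\in E$, largeness of $Y^\prime$ (together with $M=M^{-1}$) provides $y^\prime,z^\prime\in Y^\prime$ with $(y^\prime,y),(z,z^\prime)\in M$; a short composition check gives $(y^\prime,z^\prime)\in M\circ E\circ M=F$, hence $(z^\prime,y^\prime)\in F^{-1}\cap(Y^\prime\times Y^\prime)\subseteq G$, and therefore $(z,y)\in M\circ G\circ M$. Thus $E^{-1}\subseteq M\circ G\circ M\in\E_Y$, and closure of the ideal $\E_Y$ under subsets yields $E^{-1}\in\E_Y$. Being both quasi-coarse and semi-coarse, $\E_Y$ is a coarse structure.

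I expect the sandwich argument of the last paragraph to be the main obstacle: its content is that the composition-closure of $\E_Y$ allows one to absorb, on both sides, the ``error'' entourage $M$ arising from the largeness of $Y^\prime$, so that an inversion carried out only within $Y^\prime$ can be extended to all of $Y$. Everything else—the reduction to the asymorphism via Theorem \ref{prop:sym-quasi-coarse-equivalence} and the transfer of the semi-coarse property along it—follows directly from results already established earlier in the paper.
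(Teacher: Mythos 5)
Your proof is correct, but it takes a genuinely different route from the paper's. The paper argues directly from the definition of $\Sym$-coarse equivalence: taking a $\Sym$-coarse inverse $g$ of $f$ and symmetric closeness entourages $E=E^{-1}\in\E_X$, $F=F^{-1}\in\E_Y$, it inverts an arbitrary $K\in\E_Y$ by pushing it to $X$ along $g$ (bornologous), inverting there (where semi-coarseness is available), pushing back along $f$, and absorbing the closeness errors on both sides: $K^{-1}\subseteq F\circ(f\times f)\bigl(((g\times g)(K))^{-1}\bigr)\circ F\in\E_Y$. You instead route everything through the equivalence (a)$\Leftrightarrow$(e) of Theorem \ref{prop:sym-quasi-coarse-equivalence}: semi-coarseness is transferred from $\E_X|_{X^\prime}$ to $\E_Y|_{Y^\prime}$ along the asymorphism $f^\prime$ (using the invariance fact the paper quotes from \cite{ProBan} and the stability of semi-coarseness under substructures), and then promoted from the large subspace $Y^\prime$ to all of $Y$ via the sandwich $E^{-1}\subseteq M\circ G\circ M$. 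The final absorption trick is the same in spirit in both proofs --- conjugation by a symmetric ``error'' entourage, legitimised by quasi-coarseness of $\E_Y$ --- but your decomposition buys a reusable intermediate statement (if a quasi-coarse space has a subspace that is large in the $\Sym$ sense and whose substructure is semi-coarse, then the whole structure is coarse), at the cost of invoking the heavier characterisation theorem and the asymorphism-invariance fact; the paper's argument is shorter, self-contained, and needs nothing beyond the definitions of closeness and bornologous maps.
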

\begin{proof}
Let $f\colon X\to Y$ be a $\Sym$-coarse equivalence and let $g\colon Y\to X$ be a $\Sym$-coarse inverse of $f$. Moreover, let $E=E^{-1}\in\E_X$ and $F^{-1}=F\in\E_Y$ be two symmetric entourages which witness that $g\circ f\sim_{\Sym} id_X$ and $f\circ g\sim_{\Sym} id_Y$, respectively. Then, for every $K\in\E_Y$ and $(x,y)\in K$,
$$
(y,x)=(y,f(g(y)))\circ(f(g(y)),f(g(x)))\circ(f(g(x)),x)\in F\circ(f\times f)(((g\times g)(K))^{-1})\circ F\in\E_Y,
$$
and then $K^{-1}\in\E_Y$.
\end{proof}



Let $\XX$ be a category and $\sim$ be a {\em congruence} on $\XX$, i.e., for every $X,Y\in\XX$, $\sim$ is an equivalence relation in $\Mor_\XX(X,Y)$ such that, for every $f,g\in\Mor_\XX(X,Y)$ and $h,k\in\Mor_\XX(Y,Z)$, $h\circ f\sim k\circ g$, whenever $f\sim g$ and $h\sim k$. Hence the {\em quotient category $\XX/_\sim$} can be defined as the one whose objects are the same of $\XX$ and whose morphisms are equivalence classes of morphisms of $\XX$, i.e., $\Mor_{\XX/_{\sim}}(X,Y)=\{[f]_\sim\mid f\in\Mor_{\XX}(X,Y)\}$, for every $X,Y\in\XX/_\sim$. For example, the closeness relation $\sim$ is a congruence in $\PCS$ and so the quotient category $\CS$ can be defined (\cite{DikZa}).

Let $\XX$ be one of the categories $\EN$, $\SPCS$ or $\QPCS$ and let $\Ffun\colon\XX\to\PCS$ be a functor. As it is shown in the first part of this section, an equivalence relation $\sim_{\Ffun}$ on $\XX$ can be induced, which is actually a congruence (it easily follows, since $\Ffun$ is a functor and $\sim$ is a congruence). Hence it is natural to produce the quotient category $\XX/_{\sim_{\Ffun}}$ and to compare it with $\CS$. In particular $\QPCS/_{\sim_{\Sym}}$ is worth being investigated.

\subsection{Characterisation of some special classes of quasi-coarse spaces}\label{sub:characterizing}

\begin{proposizione}\label{prop:connectedness}
Let $X$ and $Y$ be two $\Sym$-coarsely equivalent quasi-coarse spaces. Then:
\begin{compactenum}[(a)]
	\item $X$ is connected if and only if $Y$ is connected;
	\item $X$ is strongly connected if and only if $Y$ is strongly connected;
	\item $X$ is uniformly connected if and only if $Y$ is uniformly connected.
\end{compactenum}
\end{proposizione}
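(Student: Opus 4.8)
The plan is to reduce the statement to two ingredients via the characterisation of $\Sym$-coarse equivalences. By item (e) of Theorem \ref{prop:sym-quasi-coarse-equivalence}, if $X$ and $Y$ are $\Sym$-coarsely equivalent then there are subspaces $X'\subseteq X$ and $Y'\subseteq Y$, large in $(X,\Sym(\E_X))$ and in $(Y,\Sym(\E_Y))$ respectively, together with an asymorphism $f'\colon X'\to Y'$. Since asymorphic entourage spaces share each of the three connectedness properties (for strong and for uniform connectedness this is recalled in \S\ref{sub:mor}; for plain connectedness it follows at once from the fact that a bornologous map sends $a\downarrow b$ to $f'(a)\downarrow f'(b)$, hence walks to walks, and both $f'$ and its inverse are bornologous), it suffices to prove the following \emph{large subspace lemma}: if $A'$ is a subspace of a quasi-coarse space $(A,\E_A)$ that is large in $(A,\Sym(\E_A))$, then $A$ is connected (strongly connected, uniformly connected) if and only if $A'$ is. Applying this lemma to $X'\subseteq X$ and to $Y'\subseteq Y$ and chaining the resulting equivalences through $f'$ then yields all three statements.

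To prove the lemma I would first fix a \emph{symmetric} witness: since $A'$ is large in $(A,\Sym(\E_A))$ there is $E\in\Sym(\E_A)$ with $E[A']=A$, and then $M=E\cup E^{-1}\in\Sym(\E_A)\subseteq\E_A$ is symmetric with $M[A']=A$. Thus every $x\in A$ admits $x'\in A'$ with $(x',x)\in M$ and, by symmetry, $(x,x')\in M$; as $\E_A$ is closed under subsets, $\{(x',x)\},\{(x,x')\}\in\E_A$, i.e.\ $x\downarrow x'$ and $x'\downarrow x$. The strong connectedness case is then immediate: one implication is mere restriction, using that for $F\subseteq A'\times A'$ one has $F\in\E_A|_{A'}$ exactly when $F\in\E_A$; for the converse, given $x,y\in A$ one picks representatives $x',y'\in A'$ as above, uses strong connectedness of $A'$ to get $\{(x',y')\}\in\E_A$, and composes $\{(x,x')\}\circ\{(x',y')\}\circ\{(y',y)\}=\{(x,y)\}\in\E_A$, where the quasi-coarse axiom (closure under $\circ$) is exactly what makes this an entourage.

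For connectedness one transfers a walk $z_0,\dots,z_n$ in $A$ into $A'$: choosing $z_i'\in A'$ with $z_i\leftrightsquigarrow z_i'$ via $M$, and $z_0'=z_0$, $z_n'=z_n$ when the endpoints already lie in $A'$, each elementary step $z_i\downarrow z_{i+1}$ upgrades, through the three-fold composition $\{(z_i',z_i)\}\circ\{(z_i,z_{i+1})\}\circ\{(z_{i+1},z_{i+1}')\}$, to $z_i'\downarrow z_{i+1}'$ (and symmetrically in the opposite orientation), so $z_0',\dots,z_n'$ is a walk inside $A'$; the reverse implication is the analogous, easier concatenation of walks. For uniform connectedness with parameter $E$ in $A$, I would set $G=M\circ E\circ M\in\E_A$ and $E'=G\cap(A'\times A')\in\E_A|_{A'}$; the key computation is $M\circ E^{-1}\circ M=(M\circ E\circ M)^{-1}=G^{-1}$ (using $M=M^{-1}$), so that intersecting $M\circ(E\cup E^{-1})\circ M=G\cup G^{-1}$ with the symmetric set $A'\times A'$ yields exactly $E'\cup E'^{-1}$; replacing each $w_j$ in a chain realising $(x',y')\in(E\cup E^{-1})^n$ by a nearby $w_j'\in A'$ then gives $(x',y')\in(E'\cup E'^{-1})^n$, while conversely $E'\cup M$ serves as a parameter witnessing uniform connectedness of $A$. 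The main obstacle throughout is precisely this passage between $A$ and the large subspace $A'$: it is only the quasi-coarse (composability) axiom that lets one absorb the detours through $M$ into genuine entourages, and for uniform connectedness one additionally needs the symmetry of $M$ to tame $E^{-1}$, which need not itself belong to $\E_A$.
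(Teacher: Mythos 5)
Your proposal is correct and follows essentially the same route as the paper: invariance of all three connectedness notions under asymorphism, reduction via Theorem \ref{prop:sym-quasi-coarse-equivalence}(e) to the case of a subspace that is large in the symmetrisation, and then the large-subspace case itself, which the paper dismisses as ``easily shown'' and you prove in full detail. The only cosmetic point is that in the uniform-connectedness step you should enlarge $M$ to $M\cup\Delta_A$ so that the endpoints $x',y'$ can serve as their own nearby points of $A'$ in the chain replacement; this is harmless, since $\Delta_A\in\E_A$ and both symmetry and the largeness witness are preserved.
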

\begin{proof}
First of all, note that all those properties are invariant under asymorphism. Thanks to Theorem \ref{prop:sym-quasi-coarse-equivalence}(e), it is enough to prove the claim when $X$ is a large subspace of $\Sym(Y)$, which can be easily shown.
\end{proof}


\begin{lemma}\label{lemma:cofinality}
Let $(X,\E_X)$ and $(Y,\E_Y)$ be two $\Sym$-coarsely equivalent quasi-coarse spaces. Then $\cf\E_X=\cf\E_Y$.
\end{lemma}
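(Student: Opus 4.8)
The plan is to reduce everything to the structural description of $\Sym$-coarse equivalences furnished by Theorem \ref{prop:sym-quasi-coarse-equivalence}(e). By that item there are subspaces $X^\prime\subseteq X$ and $Y^\prime\subseteq Y$, large in $(X,\Sym(\E_X))$ and in $(Y,\Sym(\E_Y))$ respectively, together with an asymorphism $f^\prime\colon X^\prime\to Y^\prime$. It therefore suffices to establish two facts: (A) asymorphic quasi-coarse spaces have equal cofinality, and (B) if $A$ is large in $(X,\Sym(\E_X))$, then $\cf(\E_X|_A)=\cf\E_X$. Granting these, I would conclude with the chain $\cf\E_X=\cf(\E_X|_{X^\prime})=\cf(\E_Y|_{Y^\prime})=\cf\E_Y$, where the two outer equalities are instances of (B) and the middle one is (A).

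Fact (A) is immediate. If $\BB$ is a base of $\E_X|_{X^\prime}$, then $\{(f^\prime\times f^\prime)(E)\mid E\in\BB\}$ is a base of $\E_Y|_{Y^\prime}$: each image lies in $\E_Y|_{Y^\prime}$ because $f^\prime$ is bornologous, and every entourage of $Y^\prime$ is contained in such an image because $(f^\prime)^{-1}$ is bornologous and $f^\prime$ is bijective. Hence $\cf(\E_Y|_{Y^\prime})\le\cf(\E_X|_{X^\prime})$, and the reverse inequality follows symmetrically from $(f^\prime)^{-1}$.

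Fact (B) is the heart of the matter. The inequality $\cf(\E_X|_A)\le\cf\E_X$ is clear, since restricting a base of $\E_X$ to $A\times A$ yields a base of $\E_X|_A$. For the reverse inequality I would use largeness to fix a symmetric entourage $M=M^{-1}\in\Sym(\E_X)\subseteq\E_X$ with $\Delta_X\subseteq M$ and $M[A]=X$, and then choose a retraction $r\colon X\to A$ with $r|_A=id_A$ and $(x,r(x))\in M$ for every $x\in X$. The key computation is that, for every $E\in\E_X$, the factorisation $(x,y)=(x,r(x))\circ(r(x),r(y))\circ(r(y),y)$ simultaneously shows $(r\times r)(E)\subseteq(M\circ E\circ M)\cap(A\times A)\in\E_X|_A$ and $E\subseteq M\circ(r\times r)(E)\circ M$. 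Consequently, if $\BB^\prime$ is a base of $\E_X|_A$ realising its cofinality, then $\{M\circ F\circ M\mid F\in\BB^\prime\}$ is a base of $\E_X$: each such entourage lies in $\E_X$ because $\E_X$ is a quasi-coarse structure, hence closed under composition, and every $E\in\E_X$ is contained in $M\circ F\circ M$ for the $F\in\BB^\prime$ with $(r\times r)(E)\subseteq F$. This family has cardinality at most $\lvert\BB^\prime\rvert$, whence $\cf\E_X\le\cf(\E_X|_A)$.

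The hard part will be Fact (B), and specifically the inflation step: producing a base of $\E_X$ out of a base of $\E_X|_A$ relies essentially on closure under composition, which is exactly why the statement is confined to quasi-coarse rather than arbitrary entourage spaces, and on the largeness of $A$ in the \emph{symmetrisation}, which supplies a single symmetric $M$ absorbing both end corrections at once. The remaining points (that $r$ is well defined, and that the families exhibited really are bases in the sense $\widehat{\BB}=\E$) are routine verifications that I would only spell out briefly.
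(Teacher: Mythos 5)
Your proof is correct and takes essentially the same route as the paper's: both reduce via Theorem \ref{prop:sym-quasi-coarse-equivalence}(e) to a subspace that is large in the symmetrisation, note that restriction gives one inequality, and then inflate a base $\BB^\prime$ of the subspace to a base $\{M\circ F\circ M\mid F\in\BB^\prime\}$ of the whole space, using a symmetric $M\in\E_X$ coming from largeness and closure under composition. The only differences are presentational: you make explicit the asymorphism-invariance of cofinality (your Fact (A)), which the paper absorbs into its ``we can assume $Y$ is an entourage subspace of $X$'' reduction, and your bookkeeping (choosing $F\supseteq(r\times r)(E)$, where $(r\times r)(E)\subseteq(M\circ E\circ M)\cap(A\times A)$) is in fact slightly more careful than the paper's choice of an $E_i$ containing only $F|_{Y\times Y}$.
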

\begin{proof}
By applying Theorem \ref{prop:sym-quasi-coarse-equivalence}, we can assume that $Y$ is an entourage subspace of $X$ and the inclusion map $i\colon Y\to X$ is large-scale surjective. It is trivial that $\cf\E_Y\leq\cf\E_X$. Let $f\colon X\to Y$ be a $\Sym$-coarse inverse of $i$ and $M=M^{-1}\in\E_X$ be an entourage such that $(x,f(x))\in M$, for every $x\in X$. Then, for every base $\{E_i\}_{i\in I}$ of $\E_Y$, we claim that $\{M\circ E_i\circ M\}_i$ is a base of $\E_X$, and thus $\cf\E_X\leq\cf\E_Y$. In fact, let $F\in\E_X$ and $i\in I$ be an index such that $F|_{Y\times Y}\subseteq E_i$. Then $F\subseteq M\circ E_i\circ M$.
\end{proof}

We are now ready to prove the generalisations of some classical classification results in the framework of quasi-coarse spaces (\cite[Theorems 9.1, 9.2]{ProBan}, \cite[Theorem 2.11]{ProZar}). The following results, together with Proposition \ref{prop:metric}, give a complete characterisation of metric entourage structures.

\begin{teorema}\label{theo:ext_metriz}
Let $(X,\E)$ be a quasi-coarse space. The following properties are equivalent:
\begin{compactenum}[(a)]
\item there exists an extended quasi-metric $d\colon X\times X\to[0,\infty]$ on $X$ such that $\E=\E_{d}$; 
\item there exists an extended quasi-metric space $(Y,d)$ which is $\Sym$-coarsely equivalent to $(X,\E)$;
\item $\cf\E\leq\omega$.
\end{compactenum}
\end{teorema}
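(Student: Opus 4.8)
The plan is to establish the cycle (a)$\to$(b)$\to$(c)$\to$(a), using Lemma \ref{lemma:cofinality} and Theorem \ref{prop:sym-quasi-coarse-equivalence} for the two ``soft'' implications and a direct metrisation argument for the third, which is where the real work lies. Note that (a)$\Leftrightarrow$(c) alone already looks like a quasi-coarse refinement of Proposition \ref{prop:metric}; the point of routing through (b) is to make the $\Sym$-coarse invariance of cofinality do the bookkeeping.

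First I would dispatch (a)$\to$(b): if $\E=\E_d$ for an extended quasi-metric $d$, then $(X,\E)$ is exactly the quasi-coarse space underlying the extended quasi-metric space $(X,d)$, and it is $\Sym$-coarsely equivalent to itself via $\mathrm{id}_X$, which is large-scale surjective, bornologous and effectively proper, hence a $\Sym$-coarse equivalence by Theorem \ref{prop:sym-quasi-coarse-equivalence}. For (b)$\to$(c), suppose $(Y,d)$ is an extended quasi-metric space $\Sym$-coarsely equivalent to $(X,\E)$. Since $d$ is in particular an extended quasi-pseudometric, $(Y,\E_d)$ is a quasi-coarse space (Example \ref{ex:metric}) and $\{E_n\mid n\in\N\}$ is a countable base of $\E_d$, so $\cf\E_d\leq\omega$. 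As both spaces are quasi-coarse, Lemma \ref{lemma:cofinality} gives $\cf\E=\cf\E_d\leq\omega$.

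The substantial implication is (c)$\to$(a), where I must produce an honest extended quasi-metric. Starting from a countable base $\{G_n\mid n\in\N\}$ of $\E$, which, as in the proof of Proposition \ref{prop:metric}, I may take increasing, the idea is to replace it by a base that is \emph{subadditive under composition}. Concretely, for each $n\in\N$ set
$$
F_n=\Delta_X\cup\bigcup\{G_{a_1}\circ\cdots\circ G_{a_k}\mid k\geq 1,\ a_i\geq 1,\ a_1+\cdots+a_k\leq n\}.
$$
For fixed $n$ only finitely many admissible tuples occur, so $F_n$ is a finite union of compositions of members of $\E$; since $\E$ is a quasi-coarse structure (closed under $\circ$, closed under finite unions, and containing $\Delta_X$), each $F_n\in\E$. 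By construction $\{F_n\}$ is increasing and contains $\{G_n\}$ cofinally (take $k=1$), hence is again a base of $\E$, and it satisfies the key estimate $F_n\circ F_m\subseteq F_{n+m}$: concatenating an admissible composition of total index at most $n$ with one of total index at most $m$ produces an admissible composition of total index at most $n+m$.

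Finally I would define $d(x,y)=\min\{n\mid (x,y)\in F_n\}$ (with $\min\emptyset=\infty$), exactly as in Proposition \ref{prop:metric}. Then $d(x,x)=0$, and because $F_0=\Delta_X$ one has $d(x,y)=0$ only for $x=y$, so $d$ is separated and hence a genuine extended quasi-metric rather than merely a quasi-pseudometric; the triangle inequality $d(x,y)\leq d(x,z)+d(z,y)$ is immediate from $F_n\circ F_m\subseteq F_{n+m}$. As $\{F_n\}$ is increasing, $B_d(x,n)=F_n[x]$, whence $E_n=F_n$ and $\E_d=\widehat{\{F_n\}}=\E$. The main obstacle is precisely the passage to the composition-subadditive base $\{F_n\}$: this is the only step that uses the quasi-coarse ($\circ$-closed) hypothesis, and it is exactly what upgrades the semi-positive-definite map of Proposition \ref{prop:metric} to a quasi-metric.
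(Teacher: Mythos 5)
Your proof is correct and follows essentially the same route as the paper: the cycle (a)$\to$(b)$\to$(c)$\to$(a), with Lemma \ref{lemma:cofinality} giving (b)$\to$(c) and the metrisation formula \eqref{eq:metrizable} applied to a countable base satisfying $F_m\circ F_n\subseteq F_{m+n}$ giving (c)$\to$(a). The only difference is that you explicitly construct such a composition-subadditive base from an arbitrary countable base, a step the paper merely asserts; your construction is a valid way of filling in that detail.
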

\begin{proof}
The implications (a)$\to$(b)$\to$(c) are trivial: in particular, (b)$\to$(c) is implied by Lemma \ref{lemma:cofinality}.

(c)$\to$(a) Let $\{F_n\}_n$ be a base of $\E$ as in the proof of Proposition \ref{prop:metric}(a) with the following further property: for every $m,n\in\N$, $F_m\circ F_n\subseteq F_{m+n}$.  
We claim that the map $d\colon X\times X\to[0,\infty]$ defined as in \eqref{eq:metrizable} is an extended quasi-metric and, in order to show it, proving that $d$ satisfies the triangle inequality is enough. Let $x,y,z\in X$ be three arbitrary points. The inequality $d(x,z)\leq d(x,y)+d(y,z)$ trivially holds if $d(x,y)=\infty$ or $d(y,z)=\infty$. Suppose now that $d(x,y)\leq m$ and $d(y,z)\leq n$. Then $(x,z)=(x,y)\circ(y,z)\in F_m\circ F_n\subseteq F_{m+n}$ and thus $d(x,z)\leq m+n$. 
Finally, the equality $\E=\E_d$ can be easily proved.
\end{proof}
A quasi-coarse space satisfying the hypothesis of the previous theorem is called {\em quasi-metrizable}. Since the extended quasi-metric defined in the proof of Theorem \ref{theo:ext_metriz} is a quasi-metric if and only if the quasi-coarse space is strongly connected, in view of Proposition \ref{prop:connectedness}, Theorem \ref{theo:ext_metriz} can be specialised as follows.
\begin{corollary}\label{theo:metrizability}
	Let $(X,\E)$ be a quasi-coarse space. The following properties are equivalent:
	\begin{compactenum}[(a)]
		\item there exists a quasi-metric $d$ on $X$ such that $\E=\E_d$;
		\item there exists a quasi-metric space $(Y,d)$ which is $\Sym$-coarsely equivalent to $(X,\E)$;
		\item $(X,\E)$ is strongly connected and $\cf\E\leq\omega$.
	\end{compactenum}
\end{corollary}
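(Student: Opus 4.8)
The plan is to obtain this corollary as a refinement of Theorem \ref{theo:ext_metriz}, tracking the single additional feature---finiteness of the metric---through the cycle of implications. The essential tool is the observation recorded just before the statement: the extended quasi-metric produced in the proof of Theorem \ref{theo:ext_metriz}(c)$\to$(a), namely the map $d$ defined by \eqref{eq:metrizable}, is finite-valued precisely when $(X,\E)$ is strongly connected. Thus the strategy is to verify that adding strong connectedness to condition (c) of Theorem \ref{theo:ext_metriz} upgrades each ``extended quasi-metric'' to an honest quasi-metric. The easy directions come first: (a)$\to$(b) is immediate, since taking $(Y,d)=(X,d)$ makes the identity an asymorphism, hence a $\Sym$-coarse equivalence; and (b)$\to$(c) splits into the cofinality statement and the connectedness statement, of which the former follows from Lemma \ref{lemma:cofinality} together with the fact that $\{E_n\mid n\in\N\}$ is a countable base of $\E_d$, giving $\cf\E=\cf\E_d\le\omega$.

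For the implication (c)$\to$(a) I would assume $(X,\E)$ is strongly connected with $\cf\E\le\omega$, apply Theorem \ref{theo:ext_metriz}(c)$\to$(a) to produce an extended quasi-metric $d$ as in \eqref{eq:metrizable} with $\E=\E_d$, and then invoke the remark above: strong connectedness forces this $d$ to be finite-valued. Since \eqref{eq:metrizable} already separates points---$d(x,y)=0$ means $y\in F_0[x]=\Delta_X[x]=\{x\}$---the resulting $d$ is a genuine quasi-metric, which is exactly (a). For the remaining part of (b)$\to$(c), I would use the invariance of strong connectedness: if $(Y,d)$ is a (finite-valued) quasi-metric space, then for any $x,y\in Y$ we have $d(x,y)=R<\infty$, so $\{(x,y)\}\subseteq E_R\in\E_d$ yields $x\downarrow y$, and symmetrically $y\downarrow x$, so $(Y,\E_d)$ is strongly connected. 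By Proposition \ref{prop:connectedness}(b), strong connectedness transfers across the $\Sym$-coarse equivalence to $(X,\E)$.

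The only point genuinely requiring care is the extended-versus-finite dichotomy: one must make sure that in (c)$\to$(a) the map $d$ is truly finite (not merely an extended quasi-metric), which is where strong connectedness is indispensable, and conversely that a finite-valued quasi-metric forces strong connectedness so that the invariance of Proposition \ref{prop:connectedness}(b) can be applied in (b)$\to$(c). Beyond this bookkeeping, the argument is a direct transcription of Theorem \ref{theo:ext_metriz}, so I expect no substantial obstacle.
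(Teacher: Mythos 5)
Your proposal is correct and follows exactly the paper's route: the paper derives this corollary from Theorem \ref{theo:ext_metriz} by the very observations you make, namely that the extended quasi-metric \eqref{eq:metrizable} built in that theorem's proof is finite-valued (hence a genuine quasi-metric, since $F_0=\Delta_X$ gives separation) if and only if the space is strongly connected, and that strong connectedness is preserved under $\Sym$-coarse equivalence by Proposition \ref{prop:connectedness}. Your bookkeeping of the easy implications (a)$\to$(b) via the identity asymorphism and (b)$\to$(c) via Lemma \ref{lemma:cofinality} plus the strong connectedness of quasi-metric spaces matches the intended argument.
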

\cite[Proposition 2.1.1]{ProZar} implies that the extended quasi metrics in Theorem \ref{theo:ext_metriz} and the quasi-metrics in Corollary \ref{theo:metrizability} can be taken as extended metrics and metrics, respectively, if and only if the initial space is a coarse space.

Finally we can answer to a problem posed by Protasov and Banakh \cite[Problem 9.4]{ProBan}.
\begin{teorema}\label{theo:graphic}
Let $(X,\E)$ be a connected quasi-coarse space. Then the following properties are equivalent:
\begin{compactenum}[(a)]
\item $(X,\E)$ is a graphic quasi-coarse space;
\item $(X,\E)$ is $\Sym$-coarsely equivalent to a graphic quasi-coarse space;
\item $(X,\E)$ is uniformly weakly connected and it is quasi-metrizable.
\end{compactenum}
\end{teorema}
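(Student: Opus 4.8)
The plan is to prove the cycle (a)$\to$(b)$\to$(c)$\to$(a), mirroring the scheme used for Theorem \ref{theo:ext_metriz}. The conceptual heart of the argument is the following reformulation of being graphic: a connected quasi-coarse space $(X,\E)$ is graphic precisely when there is a reflexive entourage $E\in\E$ whose composition powers $\{E^n\mid n\in\N\}$ form a base of $\E$, i.e.\ $\E=\Wfun(\widehat{\{E\}})$. Indeed, if $\Gamma$ is a directed graph with path extended quasi-metric $d_\Gamma$ and $E=\Delta_X\cup\{\text{edges}\}$ is its unit entourage, then $(x,y)\in E^n$ if and only if $d_\Gamma(x,y)\le n$, so $\{E^n\}$ is exactly the canonical base $\{E_n\}$ of $\E_{d_\Gamma}$. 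It is this single-generator property that uniform weak connectivity is designed to capture, while quasi-metrizability supplies the countability. The implication (a)$\to$(b) is then immediate, since every space is $\Sym$-coarsely equivalent to itself.

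For (b)$\to$(c) I would first observe that a graphic space is quasi-metrizable (its cofinality is at most $\omega$, the base $\{E^n\}$ being countable) and uniformly weakly connected (witnessed by its unit entourage). It then remains to show that these two properties, together with connectedness, are invariant under $\Sym$-coarse equivalence. Connectedness is handled by Proposition \ref{prop:connectedness} and quasi-metrizability by Lemma \ref{lemma:cofinality}, since $\cf\E$ is a $\Sym$-coarse invariant. For uniform weak connectivity I would invoke Theorem \ref{prop:sym-quasi-coarse-equivalence}(e) to reduce to the case where one space is a large subspace of the other; I then transport the generating entourage along the inclusion and its $\Sym$-coarse inverse, enlarging it by a fixed symmetric entourage $M$ witnessing largeness, and check that the powers of the enlarged entourage remain cofinal.

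The substance of the theorem is (c)$\to$(a). Starting from a uniformly weakly connected, quasi-metrizable $(X,\E)$, I extract a reflexive entourage $E\in\E$ whose powers $\{E^n\}$ are cofinal, and I let $\Gamma$ be the directed graph on $X$ with an edge $x\to y$ whenever $(x,y)\in E$ and $x\neq y$. Writing $d_\Gamma$ for the induced path extended quasi-metric, reflexivity of $E$ gives $d_\Gamma(x,y)\le n\iff (x,y)\in E^n$, so $d_\Gamma$ is a genuine extended quasi-metric (the triangle inequality is exactly $E^m\circ E^n\subseteq E^{m+n}$) and $\E_{d_\Gamma}=\widehat{\{E^n\}}=\E$. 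Connectedness of $(X,\E)$ guarantees that the underlying undirected graph of $\Gamma$ is connected, so $\Gamma$ is an honest directed graph realizing $\E$, and $(X,\E)$ is graphic.

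The main obstacle is twofold. In (b)$\to$(c) the delicate point is that a $\Sym$-coarse equivalence only controls the symmetric parts $\E\cap\E^{-1}$ of the structures, whereas uniform weak connectivity is a genuinely directed statement about the composition powers $E^n$; transporting a single directed generator across such an equivalence, and verifying that its powers stay cofinal, is where care is required. The second, and from the metric viewpoint the essential, difficulty lies in (c)$\to$(a): one must ensure that the containment of each basic entourage in a power of $E$ is \emph{uniform} — that every pair at distance at most $m$ is joined by a path of at most $N(m)$ unit edges — rather than merely pointwise. This geodesic realizability is precisely the content that uniform weak connectivity adds to plain quasi-metrizability, and it is what distinguishes a graphic structure from an arbitrary quasi-metrizable one.
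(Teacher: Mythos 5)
Your proposal is correct at the same level of detail as the paper and takes essentially the same route: the paper also proves (c)$\to$(a) by taking an extended quasi-metric $d$ with $\E=\E_d$, choosing $n$ so that $E_n$ is a parameter of uniform connectivity, and forming the directed graph whose edges are the pairs of $E_n$, which is exactly your single-generator reformulation, with (a)$\to$(b) immediate and (b)$\to$(c) obtained from Proposition \ref{prop:connectedness} and Lemma \ref{lemma:cofinality} (the paper simply calls these implications trivial). The only caveat is that the cofinality of the powers $\{E_n^m\mid m\in\N\}$ in $\E$ --- your ``geodesic realizability'' --- is asserted rather than verified in the paper too (``the graphic quasi-ballean associated to $\Gamma$ is asymorphic to $\E_d$''), so the step you flag as the main obstacle without discharging it is precisely the one the paper's own proof also leaves unproved.
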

\begin{proof}
The implications (a)$\to$(b) and (b)$\to$(c) are trivial.

(c)$\to$(a) Let $d$ be an extended quasi-metric such that $\E=\E_{d}$. Let $n$ be a natural number such that $\E_{d}$ is uniformly connected with parameter $E_n$. Finally, define a graph $\Gamma=(X,E)$, whose edges are the pairs $(x,y)\in X\times X$ such that $y\in E_n[x]$. Then the graphic quasi-ballean associated to the graph $\Gamma$ is asymorphic to $\E_{d}$ and so to $\E$.
\end{proof}

In the previous theorem, the request that the quasi-coarse space is uniformly connected can be easily relaxed. 
\begin{corollario}
Let $(X,\E)$ be a quasi-coarse space. Then the following are equivalent:
\begin{compactenum}[(a)]
\item $(X,\E)$ is a graphic quasi-coarse space;
\item $(X,\E)$ is $\Sym$-coarsely equivalent to a graphic quasi-coarse space;
\item $(X,\E)$ is quasi-metrizable and there exists $E\in\E$ such that each connected component is uniformly connected with parameter $E$.
\end{compactenum}
\end{corollario}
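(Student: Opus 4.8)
The plan is to reduce the corollary to the connected case, Theorem \ref{theo:graphic}, by splitting $(X,\E)$ into its connected components and using the single entourage $E$ in (c) as the device that makes the component-wise reconstructions fit together. The implication (a)$\to$(b) is immediate, since every space is $\Sym$-coarsely equivalent to itself. For (b)$\to$(c), a graphic quasi-coarse space carries its path extended quasi-metric and hence has a countable base, so Lemma \ref{lemma:cofinality} gives $\cf\E\le\omega$ and Theorem \ref{theo:ext_metriz} then yields quasi-metrizability. The existence of one parameter working on all components is the uniform-connectedness property, which on the graphic side is witnessed by the edge relation $\Delta_Y\cup\mathcal E_Y$; I would transport it across the equivalence using Theorem \ref{prop:sym-quasi-coarse-equivalence}(e), checking, as in the proof of Proposition \ref{prop:connectedness}, that composing with the large-ness entourage turns a common parameter on $Y$ into a common parameter on $X$.

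The real work is (c)$\to$(a). Write $X=\bigsqcup_i X_i$ for the decomposition into connected components and fix an extended quasi-metric $d$ with $\E=\E_d$ and a base $\{F_n\}$ satisfying $F_m\circ F_n\subseteq F_{m+n}$, as produced in the proof of Theorem \ref{theo:ext_metriz}; pick $N$ with $E\subseteq F_N$, so that every $X_i$ is uniformly connected with parameter $F_N$. Define a graph $\Gamma=(X,\mathcal E)$ whose edges are the pairs $(x,y)$ with $0<d(x,y)\le N$; such pairs never cross components, so the connected components of $\Gamma$ coincide with the $X_i$. Let $\delta$ be the path extended quasi-metric of $\Gamma$; the goal is the single equality $\E_\delta=\E$, which gives (a) via the identity asymorphism.

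One inclusion is routine: a directed $\Gamma$-path of length $\ell$ lands in $F_N^{\,\ell}\subseteq F_{\ell N}$, so $\delta(x,y)\le\ell$ forces $d(x,y)\le\ell N$, whence $\E_\delta\subseteq\E$. The reverse inclusion $\E\subseteq\E_\delta$ amounts to showing that each $F_k$ is contained in some composite $F_N^{\,m}$, that is, that the forward $F_N$-steps recover $d$ up to a multiplicative constant on every component at once.

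Here lies the main obstacle, and it is genuinely about uniformity rather than about a single component: the delicate point is to exhibit a bound $m=m(k)$ on the number of $F_N$-steps that is the same for all components, so that one composite $F_N^{\,m}$ dominates $F_k$ everywhere. This is precisely what a single common $E$ buys, as opposed to a separate uniform-connectedness parameter on each component; without it, the per-component reconstruction levels coming from Theorem \ref{theo:graphic} could be unbounded and $\Gamma$ would fail to recover $\E$ globally. Once this uniform bound is in place, the two inclusions combine to give $\E_\delta=\E$, and the general corollary follows from the connected case by this component-wise gluing.
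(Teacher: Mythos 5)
Your (a)$\to$(b) is immediate, and your (b)$\to$(c) (cofinality via Lemma \ref{lemma:cofinality} and Theorem \ref{theo:ext_metriz}, then transporting the common parameter through Theorem \ref{prop:sym-quasi-coarse-equivalence}(e) as in Proposition \ref{prop:connectedness}) is a workable rendering of what the paper leaves implicit. The problem is (c)$\to$(a). Your construction --- edges are the pairs with $0<d(x,y)\le N$, where $E\subseteq F_N$ --- is exactly the construction in the paper's proof of Theorem \ref{theo:graphic}, applied globally instead of componentwise, and the inclusion $\E_\delta\subseteq\E$ is indeed routine. But the reverse inclusion, which you correctly reduce to producing, for each $k$, a single exponent $m(k)$ valid on all of $X$ with $F_k\subseteq F_N^{m(k)}$, is never proved: the sentence ``this is precisely what a single common $E$ buys'' is an assertion, not an argument, and that assertion is the entire mathematical content of the implication.

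Nor does it follow from the stated hypothesis. Uniform connectedness with parameter $E$ gives, for each pair $x,y$ in a common component, some $n$ with $(x,y)\in(E\cup E^{-1})^n$; the exponent depends on the pair, so it bounds nothing as $(x,y)$ ranges over a fixed $F_k$ (even within one component), and the chains may use backward steps, whereas $F_k\subseteq F_N^{m}$ requires forward compositions only. The bound you need can genuinely fail: let $X=\bigsqcup_{n,L\ge1}C_{n,L}$, where $C_{n,L}$ is the shortest-path metric space on points $w_0,\dots,w_L$ with unit consecutive distances and one extra edge of weight $n+1$ joining $w_0$ to $w_L$, distinct components at distance $\infty$. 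This space is quasi-metrizable and every component is uniformly connected with the common parameter $E_1$, so it satisfies (c); but inside $C_{n,L}$ the only pairs in $E_n$ are the $(w_i,w_j)$ with $\lvert i-j\rvert\le n$, so the pair $(w_0,w_L)\in E_{n+1}$ requires at least $L/n$ many $E_n$-steps, whence $E_{n+1}\not\subseteq E_n^{m}$ for every $m$. Consequently no single entourage generates $\E_d$, while every graphic quasi-coarse structure is generated by a single entourage --- the diagonal together with the edge set, since pairs at path-distance at most $m$ are exactly $m$-fold composites of pairs at path-distance at most $1$ --- and this generation property is invariant under asymorphism. So the step you flagged and then skipped is precisely where a proof must do work that the hypothesis, as the paper defines uniform connectedness, does not supply; it is also exactly the point the paper's own proof of Theorem \ref{theo:graphic} passes over with the unsupported claim that the graphic structure ``is asymorphic to $\E_d$''. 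Closing it requires either a strengthened hypothesis in (c) (a bound on chain length uniform over each entourage, with control of the direction of the steps) or a genuinely different argument; the proposal, as written, contains neither.
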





\begin{thebibliography}{9}
	\bibitem{Cats} J. Ad\'amek, H. Herrlich, G. E. Strecker, {\em Abstract and Concrete Categories. The Joy of Cats}, online edition, {\tt http://katmat.math.uni-bremen.de/acc/acc.pdf} (2005).
	\bibitem{Bru} G. C. L. Br\" ummer, {\em Topological categories}, Topology Appl. 18, (1984) 27--41. 
	\bibitem{Cec} E. \v{C}ech, {\em Topological spaces}, Academia, Prague, 1966.
	\bibitem{DikProProZav} D. Dikranjan, I. Protasov, K. Protasova, N. Zava, {\em Balleans, hyperballeans and ideals}, Appl. Topology, to appear.
	\bibitem{DikTho} D. Dikranjan, W. Tholen, {\em Categorical Structure of Closure Operators with Applications to Topology, Algebra and Discrete Mathematics and its Applications}, Kluwer Academic Publishers, Dordrecht, The Netherlands (1995).
	\bibitem{DikZa} D. Dikranjan, N. Zava, {\em Some categorical aspects of coarse spaces and balleans}, Topology Appl., 225 (2017), 164--194.
	\bibitem{DydHof} J. Dydak, C. S. Hoffland, {\em An alternative definition of coarse structures}, Topology Appl. 155 (2008), no. 9, 1013--1021.
	\bibitem{harpe}
	P. de la Harpe, \emph{Topics in geometric group theory}. Chicago Lectures in Math., the University of Chicago Press, Chicago (2000).
	\bibitem{Hau} F. Hausdorff, \textit{Set theory}, Second edition, translated from the German by John R. Aumann et al Chelsea Publishing Co., New York 1962 352 pp. 04.00
	\bibitem{Isb} J. R. Isbell, {\em Uniform Spaces}, American Mathematical Society (1964).
	\bibitem{FleLin} P. Fletcher, W. F. Lindgren, {\em Quasi-Uniform Spaces}, Lecture Notes Pure Appl. Math., 77, Dekker, New York (1982).
	\bibitem{Kun} H.-P. Kunzi, {\em Nonsymmetric distances and teir associated topologies: about the origins of basic ideals in the area of asymmetric topology}, in Handbook of the history of general topology, Vol. 3, 853--968, Hist. Topol., 3, Kluwer Acad. Publ., Dordrecht, 2001.
	\bibitem{NowYu} P. W. Nowak, G. Yu, \emph{Large Scale Geometry}. European Mathematical Society, 2012.
	\bibitem{NicRos} A. Nicas, D. Rosenthal, \emph{Coarse structures on groups}, Topology Appl. 159 (14) (2012) 3215--3228.
	\bibitem{PicPul} J. Picado, A. Pultr, {\em Cover quasi-uniformities in frames}, Topology Appl. 158 (2011), 7, 869--881. 

	\bibitem{Pro} I. Protasov, {\em Asymptotic proximities}, Applied Gen. Top., 9 (2008), no. 2, 189--195.
	\bibitem{ProBan} I. Protasov, T. Banakh, {\em Ball Structures and Colorings of Groups and Graphs}, Mat. Stud. Monogr. Ser 11, VNTL, Lviv, 2003.
	\bibitem{Sketchgroup} I. V. Protasov, O. I. Protasova, {\em Sketch of group balleans}, Matematychni Studii, 22 (2004), no. 1, 10--20.
	\bibitem{ProPro} I. Protasov, K. Protasova, {\em On hyperballeans of bounded geometry}, K. European Journal of Mathematics (2018), {\tt https://doi.org/10.1007/s40879-018-0236-y}.
	\bibitem{ProZar} I. Protasov, M. Zarichnyi, {\em General Asymptology}, 2007 VNTL Publishers, Lviv, Ukraine.
	\bibitem{Roe} J. Roe, \emph{Lectures on Coarse Geometry}, Univ. Lecture Ser., vol. 31, American Mathematical Society, Providence RI, 2003.
	\bibitem{Wil} W. A. Wilson, \textit{On Quasi-Metric Spaces}, 
	Amer. J. Math. 53 (1931), no. 3, 675--684.
	\bibitem{Zav} N. Zava, {\em Cowellpoweredness and closure operators in coarse categories}, submitted.

\end{thebibliography}
\end{document}